\chardef\@x10\chardef\@xv60
\def\tcitime{
\def\@time{%
  \@minute\time\@hour\@minute\divide\@hour\@xv
  \ifnum\@hour<\@x 0\fi\the\@hour:%
  \multiply\@hour\@xv\advance\@minute-\@hour
  \ifnum\@minute<\@x 0\fi\the\@minute
  }}%
\def\QCTOpt[#1]#2{%
  \def\QCTOptB{#1}
  \def\QCTOptA{#2}
}
\def\QCTNOpt#1{%
  \def\QCTOptA{#1}
  \let\QCTOptB\empty
}
\def\Qct{%
  \@ifnextchar[{%
    \QCTOpt}{\QCTNOpt}
}
\def\QCBOpt[#1]#2{%
  \def\QCBOptB{#1}
  \def\QCBOptA{#2}
}
\def\QCBNOpt#1{%
  \def\QCBOptA{#1}
  \let\QCBOptB\empty
}
\def\Qcb{%
  \@ifnextchar[{%
    \QCBOpt}{\QCBNOpt}
}
\def\PrepCapArgs{%
  \ifx\QCBOptA\empty
    \ifx\QCTOptA\empty
      {}%
    \else
      \ifx\QCTOptB\empty
        {\QCTOptA}%
      \else
        [\QCTOptB]{\QCTOptA}%
      \fi
    \fi
  \else
    \ifx\QCBOptA\empty
      {}%
    \else
      \ifx\QCBOptB\empty
        {\QCBOptA}%
      \else
        [\QCBOptB]{\QCBOptA}%
      \fi
    \fi
  \fi
}
\def\GRAPHICSPS#1{%
 \ifcase\GRAPHICSTYPE
   \special{ps: #1}%
 \or
   \special{language "PS", include "#1"}%
 \fi
}%
\def\graffile#1#2#3#4{%
    \leavevmode
    \raise -#4 \BOXTHEFRAME{%
        \hbox to #2{\raise #3\hbox to #2{\null #1\hfil}}}%
}%
\def\draftbox#1#2#3#4{%
 \leavevmode\raise -#4 \hbox{%
  \frame{\rlap{\protect\tiny #1}\hbox to #2%
   {\vrule height#3 width\z@ depth\z@\hfil}%
  }%
 }%
}%
\newif\ifwasdraft
\def\GRAPHIC#1#2#3#4#5{%
 \ifnum\draft=\@ne\draftbox{#2}{#3}{#4}{#5}%
  \else\graffile{#1}{#3}{#4}{#5}%
  \fi
 }%
\def\addtoLaTeXparams#1{%
    \edef\LaTeXparams{\LaTeXparams #1}}%
\newif\ifBoxFrame \BoxFramefalse
\newif\ifOverFrame \OverFramefalse
\newif\ifUnderFrame \UnderFramefalse
\def\BOXTHEFRAME#1{%
   \hbox{%
      \ifBoxFrame
         \frame{#1}%
      \else
         {#1}%
      \fi
   }%
}
\def\doFRAMEparams#1{\BoxFramefalse\OverFramefalse\UnderFramefalse\readFRAMEparams#1\end}%
\def\readFRAMEparams#1{%
 \ifx#1\end%
  \let\next=\relax
  \else
  \ifx#1i\dispkind=\z@\fi
  \ifx#1d\dispkind=\@ne\fi
  \ifx#1f\dispkind=\tw@\fi
  \ifx#1t\addtoLaTeXparams{t}\fi
  \ifx#1b\addtoLaTeXparams{b}\fi
  \ifx#1p\addtoLaTeXparams{p}\fi
  \ifx#1h\addtoLaTeXparams{h}\fi
  \ifx#1X\BoxFrametrue\fi
  \ifx#1O\OverFrametrue\fi
  \ifx#1U\UnderFrametrue\fi
  \ifx#1w
    \ifnum\draft=1\wasdrafttrue\else\wasdraftfalse\fi
    \draft=\@ne
  \fi
  \let\next=\readFRAMEparams
  \fi
 \next
 }%
\def\IFRAME#1#2#3#4#5#6{%
      \bgroup
      \let\QCTOptA\empty
      \let\QCTOptB\empty
      \let\QCBOptA\empty
      \let\QCBOptB\empty
      #6%
      \parindent=0pt%
      \leftskip=0pt
      \rightskip=0pt
      \setbox0 = \hbox{\QCBOptA}%
      \@tempdima = #1\relax
      \ifOverFrame
          \typeout{This is not implemented yet}%
          \show\HELP
      \else
         \ifdim\wd0>\@tempdima
            \advance\@tempdima by \@tempdima
            \ifdim\wd0 >\@tempdima
               \textwidth=\@tempdima
               \setbox1 =\vbox{%
                  \noindent\hbox to \@tempdima{\hfill\GRAPHIC{#5}{#4}{#1}{#2}{#3}\hfill}\\%
                  \noindent\hbox to \@tempdima{\parbox[b]{\@tempdima}{\QCBOptA}}%
               }%
               \wd1=\@tempdima
            \else
               \textwidth=\wd0
               \setbox1 =\vbox{%
                 \noindent\hbox to \wd0{\hfill\GRAPHIC{#5}{#4}{#1}{#2}{#3}\hfill}\\%
                 \noindent\hbox{\QCBOptA}%
               }%
               \wd1=\wd0
            \fi
         \else
            \ifdim\wd0>0pt
              \hsize=\@tempdima
              \setbox1 =\vbox{%
                \unskip\GRAPHIC{#5}{#4}{#1}{#2}{0pt}%
                \break
                \unskip\hbox to \@tempdima{\hfill \QCBOptA\hfill}%
              }%
              \wd1=\@tempdima
           \else
              \hsize=\@tempdima
              \setbox1 =\vbox{%
                \unskip\GRAPHIC{#5}{#4}{#1}{#2}{0pt}%
              }%
              \wd1=\@tempdima
           \fi
         \fi
         \@tempdimb=\ht1
         \advance\@tempdimb by \dp1
         \advance\@tempdimb by -#2%
         \advance\@tempdimb by #3%
         \leavevmode
         \raise -\@tempdimb \hbox{\box1}%
      \fi
      \egroup%
}%
\def\DFRAME#1#2#3#4#5{%
 \begin{center}
     \let\QCTOptA\empty
     \let\QCTOptB\empty
     \let\QCBOptA\empty
     \let\QCBOptB\empty
     \ifOverFrame 
        #5\QCTOptA\par
     \fi
     \GRAPHIC{#4}{#3}{#1}{#2}{\z@}
     \ifUnderFrame 
        \nobreak\par #5\QCBOptA
     \fi
 \end{center}%
 }%
\def\FFRAME#1#2#3#4#5#6#7{%
 \begin{figure}[#1]%
  \let\QCTOptA\empty
  \let\QCTOptB\empty
  \let\QCBOptA\empty
  \let\QCBOptB\empty
  \ifOverFrame
    #4
    \ifx\QCTOptA\empty
    \else
      \ifx\QCTOptB\empty
        \caption{\QCTOptA}%
      \else
        \caption[\QCTOptB]{\QCTOptA}%
      \fi
    \fi
    \ifUnderFrame\else
      \label{#5}%
    \fi
  \else
    \UnderFrametrue%
  \fi
  \begin{center}\GRAPHIC{#7}{#6}{#2}{#3}{\z@}\end{center}%
  \ifUnderFrame
    #4
    \ifx\QCBOptA\empty
      \caption{}%
    \else
      \ifx\QCBOptB\empty
        \caption{\QCBOptA}%
      \else
        \caption[\QCBOptB]{\QCBOptA}%
      \fi
    \fi
    \label{#5}%
  \fi
  \end{figure}%
 }%
\def\makeactives{
  \catcode`\"=\active
  \catcode`\;=\active
  \catcode`\:=\active
  \catcode`\'=\active
  \catcode`\~=\active
}
   \gdef\activesoff{%
      \def"{\string"}
      \def;{\string;}
      \def:{\string:}
      \def'{\string'}
      \def~{\string~}
    }
\def\FRAME#1#2#3#4#5#6#7#8{%
 \bgroup
 \@ifundefined{bbl@deactivate}{}{\activesoff}
 \ifnum\draft=\@ne
   \wasdrafttrue
 \else
   \wasdraftfalse%
 \fi
 \def\LaTeXparams{}%
 \dispkind=\z@
 \def\LaTeXparams{}%
 \doFRAMEparams{#1}%
 \ifnum\dispkind=\z@\IFRAME{#2}{#3}{#4}{#7}{#8}{#5}\else
  \ifnum\dispkind=\@ne\DFRAME{#2}{#3}{#7}{#8}{#5}\else
   \ifnum\dispkind=\tw@
    \edef\@tempa{\noexpand\FFRAME{\LaTeXparams}}%
    \@tempa{#2}{#3}{#5}{#6}{#7}{#8}%
    \fi
   \fi
  \fi
  \ifwasdraft\draft=1\else\draft=0\fi{}%
  \egroup
 }%
\def\TEXUX#1{"texux"}
\def\limfunc#1{\mathop{\rm #1}}%
\long\def\QQQ#1#2{%
     \long\expandafter\def\csname#1\endcsname{#2}}%
\long\def\QQA#1#2{}%
\def\QTR#1#2{{\csname#1\endcsname #2}}
\def\EXPAND#1[#2]#3{}%
\def\NOEXPAND#1[#2]#3{}%
\def\LaTeXparent#1{}%
\def\ChildStyles#1{}%
\def\ChildDefaults#1{}%
\def\QTagDef#1#2#3{}%
\def\QQfnmark#1{\footnotemark}
\def\makeatletter\input gnuindex.sty\makeatother\makeindex{\makeatletter\input gnuindex.sty\makeatother\makeindex}%
\def\initial#1{\bigbreak{\raggedright\large\bf #1}\kern 2\p@\penalty3000}}%
 \def\abstract{%
  \if@twocolumn
   \section*{Abstract (Not appropriate in this style!)}%
   \else \small 
   \begin{center}{\bf Abstract\vspace{-.5em}\vspace{\z@}}\end{center}%
   \quotation 
   \fi
  }%
   \def\registered{\relax\ifmmode{}\r@gistered
                    \else$\m@th\r@gistered$\fi}%
 \def\r@gistered{^{\ooalign
  {\hfil\raise.07ex\hbox{$\scriptstyle\rm\text{R}$}\hfil\crcr
  \mathhexbox20D}}}}{}%
\newdimen\theight
\def\Column{%
 \vadjust{\setbox\z@=\hbox{\scriptsize\quad\quad tcol}%
  \theight=\ht\z@\advance\theight by \dp\z@\advance\theight by \lineskip
  \kern -\theight \vbox to \theight{%
   \rightline{\rlap{\box\z@}}%
   \vss
   }%
  }%
 }%
\def\qed{%
 \ifhmode\unskip\nobreak\fi\ifmmode\ifinner\else\hskip5\p@\fi\fi
 \hbox{\hskip5\p@\vrule width4\p@ height6\p@ depth1.5\p@\hskip\p@}%
 }%
\def\miss{\hbox{\vrule height2\p@ width 2\p@ depth\z@}}%
\def\tcol#1{{\baselineskip=6\p@ \vcenter{#1}} \Column}  %
\def\newfmtname{LaTeX2e}
\def\chkcompat{%
   \if@compatibility
   \else
     \usepackage{latexsym}
   \fi
}
  \DeclareOldFontCommand{\rm}{\normalfont\rmfamily}{\mathrm}
  \DeclareOldFontCommand{\sf}{\normalfont\sffamily}{\mathsf}
  \DeclareOldFontCommand{\tt}{\normalfont\ttfamily}{\mathtt}
  \DeclareOldFontCommand{\bf}{\normalfont\bfseries}{\mathbf}
  \DeclareOldFontCommand{\it}{\normalfont\itshape}{\mathit}
  \DeclareOldFontCommand{\sl}{\normalfont\slshape}{\@nomath\sl}
  \DeclareOldFontCommand{\sc}{\normalfont\scshape}{\@nomath\sc}
\def\alpha{{\Greekmath 010B}}%
\def\beta{{\Greekmath 010C}}%
\def\gamma{{\Greekmath 010D}}%
\def\delta{{\Greekmath 010E}}%
\def\epsilon{{\Greekmath 010F}}%
\def\zeta{{\Greekmath 0110}}%
\def\eta{{\Greekmath 0111}}%
\def\theta{{\Greekmath 0112}}%
\def\iota{{\Greekmath 0113}}%
\def\kappa{{\Greekmath 0114}}%
\def\lambda{{\Greekmath 0115}}%
\def\mu{{\Greekmath 0116}}%
\def\nu{{\Greekmath 0117}}%
\def\xi{{\Greekmath 0118}}%
\def\pi{{\Greekmath 0119}}%
\def\rho{{\Greekmath 011A}}%
\def\sigma{{\Greekmath 011B}}%
\def\tau{{\Greekmath 011C}}%
\def\upsilon{{\Greekmath 011D}}%
\def\phi{{\Greekmath 011E}}%
\def\chi{{\Greekmath 011F}}%
\def\psi{{\Greekmath 0120}}%
\def\omega{{\Greekmath 0121}}%
\def\varepsilon{{\Greekmath 0122}}%
\def\vartheta{{\Greekmath 0123}}%
\def\varpi{{\Greekmath 0124}}%
\def\varrho{{\Greekmath 0125}}%
\def\varsigma{{\Greekmath 0126}}%
\def\varphi{{\Greekmath 0127}}%
\def\nabla{{\Greekmath 0272}}
\def\FindBoldGroup{%
   {\setbox0=\hbox{$\mathbf{x\global\edef\theboldgroup{\the\mathgroup}}$}}%
}
\def\Greekmath#1#2#3#4{%
    \if@compatibility
        \ifnum\mathgroup=\symbold
           \mathchoice{\mbox{\boldmath$\displaystyle\mathchar"#1#2#3#4$}}%
                      {\mbox{\boldmath$\textstyle\mathchar"#1#2#3#4$}}%
                      {\mbox{\boldmath$\scriptstyle\mathchar"#1#2#3#4$}}%
                      {\mbox{\boldmath$\scriptscriptstyle\mathchar"#1#2#3#4$}}%
        \else
           \mathchar"#1#2#3#4%
        \fi 
    \else 
        \FindBoldGroup
        \ifnum\mathgroup=\theboldgroup 
           \mathchoice{\mbox{\boldmath$\displaystyle\mathchar"#1#2#3#4$}}%
                      {\mbox{\boldmath$\textstyle\mathchar"#1#2#3#4$}}%
                      {\mbox{\boldmath$\scriptstyle\mathchar"#1#2#3#4$}}%
                      {\mbox{\boldmath$\scriptscriptstyle\mathchar"#1#2#3#4$}}%
        \else
           \mathchar"#1#2#3#4%
        \fi     	    
	  \fi}
\newif\ifGreekBold  \GreekBoldfalse
\let\SAVEPBF=\pbf
\def\pbf{\GreekBoldtrue\SAVEPBF}%
  \newcounter{equationnumber}  
  \def\mathletters{%
     \addtocounter{equation}{1}
     \edef\@currentlabel{\theequation}%
     \setcounter{equationnumber}{\c@equation}
     \setcounter{equation}{0}%
     \edef\theequation{\@currentlabel\noexpand\alph{equation}}%
  }
    \def\BibTeX{{\rm B\kern-.05em{\sc i\kern-.025em b}\kern-.08em
                 T\kern-.1667em\lower.7ex\hbox{E}\kern-.125emX}}}{}%
\def\AmS{{\protect\usefont{OMS}{cmsy}{m}{n}%
                A\kern-.1667em\lower.5ex\hbox{M}\kern-.125emS}}}{}%
\let\DOTSI\relax
\def\eat@#1{}%
\def\RIfM@{\relax\ifmmode}%
\def\FN@{\futurelet\next}%
\def\iint{\DOTSI\intno@\tw@\FN@\ints@}%
\def\iiint{\DOTSI\intno@\thr@@\FN@\ints@}%
\def\iiiint{\DOTSI\intno@4 \FN@\ints@}%
\def\idotsint{\DOTSI\intno@\z@\FN@\ints@}%
\def\ints@{\findlimits@\ints@@}%
\newif\iflimtoken@
\newif\iflimits@
\def\findlimits@{\limtoken@true\ifx\next\limits\limits@true
 \else\ifx\next\nolimits\limits@false\else
 \limtoken@false\ifx\ilimits@\nolimits\limits@false\else
 \ifinner\limits@false\else\limits@true\fi\fi\fi\fi}%
\def\multint@{\int\ifnum\intno@=\z@\intdots@                          
 \else\intkern@\fi                                                    
 \ifnum\intno@>\tw@\int\intkern@\fi                                   
 \ifnum\intno@>\thr@@\int\intkern@\fi                                 
 \int}
\def\multintlimits@{\intop\ifnum\intno@=\z@\intdots@\else\intkern@\fi
 \ifnum\intno@>\tw@\intop\intkern@\fi
 \ifnum\intno@>\thr@@\intop\intkern@\fi\intop}%
\def\intic@{%
    \mathchoice{\hskip.5em}{\hskip.4em}{\hskip.4em}{\hskip.4em}}%
\def\negintic@{\mathchoice
 {\hskip-.5em}{\hskip-.4em}{\hskip-.4em}{\hskip-.4em}}%
\def\ints@@{\iflimtoken@                                              
 \def\ints@@@{\iflimits@\negintic@
   \mathop{\intic@\multintlimits@}\limits                             
  \else\multint@\nolimits\fi                                          
  \eat@}
 \else                                                                
 \def\ints@@@{\iflimits@\negintic@
  \mathop{\intic@\multintlimits@}\limits\else
  \multint@\nolimits\fi}\fi\ints@@@}%
\def\intkern@{\mathchoice{\!\!\!}{\!\!}{\!\!}{\!\!}}%
\def\plaincdots@{\mathinner{\cdotp\cdotp\cdotp}}%
\def\intdots@{\mathchoice{\plaincdots@}%
 {{\cdotp}\mkern1.5mu{\cdotp}\mkern1.5mu{\cdotp}}%
 {{\cdotp}\mkern1mu{\cdotp}\mkern1mu{\cdotp}}%
 {{\cdotp}\mkern1mu{\cdotp}\mkern1mu{\cdotp}}}%
\def\RIfM@{\relax\protect\ifmmode}
\def\text{\RIfM@\expandafter\text@\else\expandafter\mbox\fi}
\let\nfss@text\text
\def\text@#1{\mathchoice
   {\textdef@\displaystyle\f@size{#1}}%
   {\textdef@\textstyle\tf@size{\firstchoice@false #1}}%
   {\textdef@\textstyle\sf@size{\firstchoice@false #1}}%
   {\textdef@\textstyle \ssf@size{\firstchoice@false #1}}%
   \glb@settings}
\def\textdef@#1#2#3{\hbox{{%
                    \everymath{#1}%
                    \let\f@size#2\selectfont
                    #3}}}
\newif\iffirstchoice@
\def\Let@{\relax\iffalse{\fi\let\\=\cr\iffalse}\fi}%
\def\vspace@{\def\vspace##1{\crcr\noalign{\vskip##1\relax}}}%
\def\multilimits@{\bgroup\vspace@\Let@
 \baselineskip\fontdimen10 \scriptfont\tw@
 \advance\baselineskip\fontdimen12 \scriptfont\tw@
 \lineskip\thr@@\fontdimen8 \scriptfont\thr@@
 \lineskiplimit\lineskip
 \vbox\bgroup\ialign\bgroup\hfil$\m@th\scriptstyle{##}$\hfil\crcr}%
\def\Sb{_\multilimits@}%
\def\endSb{\crcr\egroup\egroup\egroup}%
\def\Sp{^\multilimits@}%
\newdimen\ex@
\def\rightarrowfill@#1{$#1\m@th\mathord-\mkern-6mu\cleaders
 \hbox{$#1\mkern-2mu\mathord-\mkern-2mu$}\hfill
 \mkern-6mu\mathord\rightarrow$}%
\def\leftarrowfill@#1{$#1\m@th\mathord\leftarrow\mkern-6mu\cleaders
 \hbox{$#1\mkern-2mu\mathord-\mkern-2mu$}\hfill\mkern-6mu\mathord-$}%
\def\leftrightarrowfill@#1{$#1\m@th\mathord\leftarrow
\mkern-6mu\cleaders
 \hbox{$#1\mkern-2mu\mathord-\mkern-2mu$}\hfill
 \mkern-6mu\mathord\rightarrow$}%
\def\overrightarrow{\mathpalette\overrightarrow@}%
\def\overrightarrow@#1#2{\vbox{\ialign{##\crcr\rightarrowfill@#1\crcr
 \noalign{\kern-\ex@\nointerlineskip}$\m@th\hfil#1#2\hfil$\crcr}}}%
\def\overleftarrow{\mathpalette\overleftarrow@}%
\def\overleftarrow@#1#2{\vbox{\ialign{##\crcr\leftarrowfill@#1\crcr
 \noalign{\kern-\ex@\nointerlineskip}$\m@th\hfil#1#2\hfil$\crcr}}}%
\def\overleftrightarrow{\mathpalette\overleftrightarrow@}%
\def\overleftrightarrow@#1#2{\vbox{\ialign{##\crcr
   \leftrightarrowfill@#1\crcr
 \noalign{\kern-\ex@\nointerlineskip}$\m@th\hfil#1#2\hfil$\crcr}}}%
\def\underrightarrow{\mathpalette\underrightarrow@}%
\def\underrightarrow@#1#2{\vtop{\ialign{##\crcr$\m@th\hfil#1#2\hfil
  $\crcr\noalign{\nointerlineskip}\rightarrowfill@#1\crcr}}}%
\def\underleftarrow{\mathpalette\underleftarrow@}%
\def\underleftarrow@#1#2{\vtop{\ialign{##\crcr$\m@th\hfil#1#2\hfil
  $\crcr\noalign{\nointerlineskip}\leftarrowfill@#1\crcr}}}%
\def\underleftrightarrow{\mathpalette\underleftrightarrow@}%
\def\underleftrightarrow@#1#2{\vtop{\ialign{##\crcr$\m@th
  \hfil#1#2\hfil$\crcr
 \noalign{\nointerlineskip}\leftrightarrowfill@#1\crcr}}}%
\def\qopnamewl@#1{\mathop{\operator@font#1}\nlimits@}
\let\nlimits@\displaylimits
\def\setboxz@h{\setbox\z@\hbox}
\def\varlim@#1#2{\mathop{\vtop{\ialign{##\crcr
 \hfil$#1\m@th\operator@font lim$\hfil\crcr
 \noalign{\nointerlineskip}#2#1\crcr
 \noalign{\nointerlineskip\kern-\ex@}\crcr}}}}
 \def\rightarrowfill@#1{\m@th\setboxz@h{$#1-$}\ht\z@\z@
  $#1\copy\z@\mkern-6mu\cleaders
  \hbox{$#1\mkern-2mu\box\z@\mkern-2mu$}\hfill
  \mkern-6mu\mathord\rightarrow$}
\def\leftarrowfill@#1{\m@th\setboxz@h{$#1-$}\ht\z@\z@
  $#1\mathord\leftarrow\mkern-6mu\cleaders
  \hbox{$#1\mkern-2mu\copy\z@\mkern-2mu$}\hfill
  \mkern-6mu\box\z@$}
\def\projlim{\qopnamewl@{proj\,lim}}
\def\injlim{\qopnamewl@{inj\,lim}}
\def\varinjlim{\mathpalette\varlim@\rightarrowfill@}
\def\varprojlim{\mathpalette\varlim@\leftarrowfill@}
\def\varliminf{\mathpalette\varliminf@{}}
\def\varliminf@#1{\mathop{\underline{\vrule\@depth.2\ex@\@width\z@
   \hbox{$#1\m@th\operator@font lim$}}}}
\def\varlimsup{\mathpalette\varlimsup@{}}
\def\varlimsup@#1{\mathop{\overline
  {\hbox{$#1\m@th\operator@font lim$}}}}
\def\align{\@verbatim \frenchspacing\@vobeyspaces \@alignverbatim
You are using the "align" environment in a style in which it is not defined.}
\let\csname endalign*\endcsname =\endtrivlist
\def\alignat{\@verbatim \frenchspacing\@vobeyspaces \@alignatverbatim
You are using the "alignat" environment in a style in which it is not defined.}
\let\csname endalignat*\endcsname =\endtrivlist
\def\xalignat{\@verbatim \frenchspacing\@vobeyspaces \@xalignatverbatim
You are using the "xalignat" environment in a style in which it is not defined.}
\let\csname endxalignat*\endcsname =\endtrivlist
\def\gather{\@verbatim \frenchspacing\@vobeyspaces \@gatherverbatim
You are using the "gather" environment in a style in which it is not defined.}
\let\csname endgather*\endcsname =\endtrivlist
\def\multiline{\@verbatim \frenchspacing\@vobeyspaces \@multilineverbatim
You are using the "multiline" environment in a style in which it is not defined.}
\let\csname endmultiline*\endcsname =\endtrivlist
\def\arrax{\@verbatim \frenchspacing\@vobeyspaces \@arraxverbatim
You are using a type of "array" construct that is only allowed in AmS-LaTeX.}
\def\tabulax{\@verbatim \frenchspacing\@vobeyspaces \@tabulaxverbatim
You are using a type of "tabular" construct that is only allowed in AmS-LaTeX.}
\let\csname endarrax*\endcsname =\endtrivlist
\let\csname endtabulax*\endcsname =\endtrivlist
\def\@@eqncr{\let\@tempa\relax
    \ifcase\@eqcnt \def\@tempa{& & &}\or \def\@tempa{& &}%
      \else \def\@tempa{&}\fi
     \@tempa
     \if@eqnsw
        \iftag@
           \@taggnum
        \else
           \@eqnnum\stepcounter{equation}%
        \fi
     \fi
     \global\tag@false
     \global\@eqnswtrue
     \global\@eqcnt\z@\cr}
 \def\endequation{%
     \ifmmode\ifinner 
      \iftag@
        \addtocounter{equation}{-1} 
        $\hfil
           \displaywidth\linewidth\@taggnum\egroup \endtrivlist
        \global\tag@false
        \global\@ignoretrue   
      \else
        $\hfil
           \displaywidth\linewidth\@eqnnum\egroup \endtrivlist
        \global\tag@false
        \global\@ignoretrue 
      \fi
     \else   
      \iftag@
        \addtocounter{equation}{-1} 
        \eqno \hbox{\@taggnum}
        \global\tag@false%
        $$\global\@ignoretrue
      \else
        \eqno \hbox{\@eqnnum}
        $$\global\@ignoretrue
      \fi
     \fi\fi
 } 
 \newif\iftag@ \tag@false
 \def\tag{\@ifnextchar*{\@tagstar}{\@tag}}
 \def\@tag#1{%
     \global\tag@true
     \global\def\@taggnum{(#1)}}
 \def\@tagstar*#1{%
     \global\tag@true
     \global\def\@taggnum{#1}%
}
\theoremstyle{definition}
\theoremstyle{remark}
\numberwithin{equation}{section}
\begin{document}
\title[Weighted Sobolev spaces with nontrivial power weights ]{Special embeddings of weighted Sobolev spaces with nontrivial power weights}
\author{Patrick J. Rabier}
\address{Department of Mathematics, University of Pittsburgh, Pittsburgh, PA 15260}
\email{rabier@imap.pitt.edu}
\subjclass{46E35}
\keywords{Weighted Sobolev space, embedding, CKN inequalities.}
\maketitle

\begin{abstract}
In prior work, the author has characterized the real numbers $a,b,c$ and $%
1\leq p,q,r<\infty $ such that the weighted Sobolev space $%
W_{\{a,b\}}^{(q,p)}(\Bbb{R}^{N}\backslash \{0\}):=\{u\in L_{loc}^{1}(\Bbb{R}%
^{N}\backslash \{0\}):|x|^{\frac{a}{q}}u\in L^{q}(\Bbb{R}^{N}),|x|^{\frac{b}{%
p}}\nabla u\in (L^{p}(\Bbb{R}^{N}))^{N}\}$ is continuously \thinspace
embedded \thinspace into $L^{r}(\Bbb{R}^{N};|x|^{c}dx)$\noindent $:=\{u\in
L_{loc}^{1}(\Bbb{R}^{N}\backslash \{0\}):|x|^{\frac{c}{r}}u\in L^{r}(\Bbb{R}%
^{N})\}.$

This paper discusses the embedding question for $W_{\{a,b\}}^{(\infty ,p)}(%
\Bbb{R}^{N}\backslash \{0\}):=\{u\in L_{loc}^{1}(\Bbb{R}^{N}\backslash
\{0\}):|x|^{a}u\in L^{\infty }(\Bbb{R}^{N}),|x|^{\frac{b}{p}}\nabla u\in
(L^{p}(\Bbb{R}^{N}))^{N}\},$ which is not the space obtained by the formal
substitution $q=\infty $ in the previous definition of $W_{\{a,b\}}^{(q,p)}(%
\Bbb{R}^{N}\backslash \{0\}),$ unless $a=0.$

The corresponding embedding theorem identifies all the real numbers $a,b,c$
and $1\leq p,r<\infty $ such that $W_{\{a,b\}}^{(\infty ,p)}(\Bbb{R}%
^{N}\backslash \{0\})$ is continuously embedded in $L^{r}(\Bbb{R}
^{N};|x|^{c}dx).$ A notable feature is that such embeddings exist only when $%
a\neq 0$ and, in particular, have no analog in the unweighted setting.

It is also shown that the embeddings are always accounted for by
multiplicative rather than just additive norm inequalities. These
inequalities are natural extensions of the Caffarelli-Kohn-Nirenberg
inequalities which, in their known form, are restricted to functions of $%
C_{0}^{\infty }(\Bbb{R}^{N})$ and do not incorporate supremum norms.
\end{abstract}

\section{Introduction\label{intro}}

Throughout this paper, $\Bbb{R}_{*}^{N}:=\Bbb{R}^{N}\backslash \{0\}.$ Given 
$d\in \Bbb{R},$ the measure $|x|^{d}dx$ on $\Bbb{R}_{*}^{N}$ can be extended
to a measure on $\Bbb{R}^{N}$ provided that the $|x|^{d}dx$-measure of $%
\{0\} $ is defined to be $0$ (this must be specified if $d\leq -N$). If so,
the space $L^{s}(\Bbb{R}^{N};|x|^{d}dx),0<s<\infty ,$ coincides with the
space of Lebesgue measurable functions $u$ on $\Bbb{R}^{N}$ such that $|x|^{%
\frac{d}{s}}u\in L^{s}(\Bbb{R}^{N}).$ The norm (quasi-norm when $0<s<1$) of $%
u\in L^{s}(\Bbb{R}^{N};|x|^{d}dx)$ will be denoted by $||u||_{d,s}:=||\,|x|^{%
\frac{d}{s}}u||_{s},$ where $||\cdot ||_{s}$ is the (quasi) norm of $L^{s}(%
\Bbb{R}^{N}).$

If $a,b\in \Bbb{R}$ and $1\leq p<\infty $ and $0<q<\infty ,$ set 
\begin{multline}
W_{\{a,b\}}^{1,(q,p)}(\Bbb{R}_{*}^{N}):=  \label{1} \\
\{u\in L_{loc}^{1}(\Bbb{R}_{*}^{N}):u\in L^{q}(\Bbb{R}^{N};|x|^{a}dx),\quad
\nabla u\in (L^{p}(\Bbb{R}^{N};|x|^{b}dx))^{N}\},
\end{multline}
with (quasi) norm $||u||_{a,q}+||\nabla u||_{b,p}$. Note that $%
W_{\{a,b\}}^{1,(q,p)}(\Bbb{R}_{*}^{N})$ is \emph{not} defined as the
-usually smaller and unknown- closure of some subspace of smooth enough
functions.

Recently, the author has characterized all the real numbers $a,b,c$ and $%
1\leq p,q,r<\infty $ ($1\leq p<\infty $ and $0<q,r<\infty $ if $N=1$) such
that $W_{\{a,b\}}^{1,(q,p)}(\Bbb{R}_{*}^{N})\hookrightarrow L^{r}(\Bbb{R}
^{N};|x|^{c}dx)$ where, as usual, ``$\hookrightarrow $'' refers to
continuous embedding, and shown that, with a single exception, this
embedding is accounted for by a multiplicative inequality (\cite{Ra11}).
This generalizes both the Sobolev embedding theorem in the unweighted case $%
a=b=c=0$ and the Caffarelli-Kohn-Nirenberg (CKN) inequalities \cite{CaKoNi84}%
\ when $a,b,c>-N$ and $u\in C_{0}^{\infty }(\Bbb{R}^{N}).$

If $W_{\{a,b\}}^{1,(q,p)}(\Bbb{R}_{*}^{N})\hookrightarrow L^{r}(\Bbb{R}
^{N};|x|^{c}dx),$ then $W_{\{a,b\}}^{1,(q,p)}(\Bbb{R}^{N})\hookrightarrow
L^{r}(\Bbb{R}^{N};|x|^{c}dx)$ since a smaller space is obtained when $\Bbb{R}%
_{*}^{N}$ is replaced by $\Bbb{R}^{N}$ in (\ref{1}). However, it is only
with $\Bbb{R}_{*}^{N}$ that the admissible values of the parameters $%
a,b,c,p,q,r$ have been exactly identified. If $b\leq 0,$ these admissible
values are the same whether $\Bbb{R}^{N}$ or $\Bbb{R}_{*}^{N}$ is used. This
is essentially trivial if $N=1$ and due to $W_{\{a,b\}}^{1,(q,p)}(\Bbb{R}
^{N})=W_{\{a,b\}}^{1,(q,p)}(\Bbb{R}_{*}^{N})$ if $b\leq 0$ and $N\geq 2$
(Remark \ref{rm4}).

The goal of the present paper is to show that, in a suitable form, the
results of \cite{Ra11} can be extended when $p$ and $r$ are still finite
but, \emph{roughly speaking,} $q=\infty ,$ although the problem is trivial
if this statement is taken literally. Indeed, since $L^{\infty }(\Bbb{R}%
^{N};|x|^{a}dx)=L^{\infty }(\Bbb{R}^{N})$ is independent of $a,$ the
constant function $u=1$ gives an example when $u\in L^{\infty }(\Bbb{R}
^{N};|x|^{a}dx)$ and $\nabla u\in (L^{p}(\Bbb{R}^{N};|x|^{b}dx))^{N}$
irrespective of $a,b,p,$ yet $u$ does not belong to $L^{r}(\Bbb{R}
^{N};|x|^{c}dx)$ for any $c\in \Bbb{R}$ and $0<r<\infty $. Thus, no
embedding is true when $q=\infty $ in (\ref{1}).

The fact that the substitution $q=\infty $ in (\ref{1}) produces a space
independent of $a$ suggests that this is not how $W_{\{a,b\}}^{1,(q,p)}(\Bbb{%
R}_{*}^{N})$ should be defined when $q=\infty .$ As we shall see, the
``correct'' definition is given by 
\begin{multline}
W_{\{a,b\}}^{1,(\infty ,p)}(\Bbb{R}_{*}^{N}):=  \label{2} \\
\{u\in L_{loc}^{1}(\Bbb{R}_{*}^{N}):|x|^{a}u\in L^{\infty }(\Bbb{R}
^{N}),\quad \nabla u\in (L^{p}(\Bbb{R}^{N};|x|^{b}dx))^{N}\},
\end{multline}
equipped with the natural norm 
\begin{equation}
||\,|x|^{a}u||_{\infty }+||\nabla u||_{b,p}.  \label{3}
\end{equation}

As in the case of (\ref{1}) when $q<\infty ,$ the space $W_{\{a,b\}}^{1,(%
\infty ,p)}(\Bbb{R}_{*}^{N})$ contains the space $W_{\{a,b\}}^{1,(\infty
,p)}(\Bbb{R}^{N})$ obtained by replacing $\Bbb{R}_{*}^{N}$ by $\Bbb{R}^{N}$
in (\ref{2}). Once again, both spaces coincide when $N\geq 2$ and $b\leq 0;$
see Remark \ref{rm4}.

Unlike $|x|^{a}u\in L^{q}(\Bbb{R}^{N})$ with $q<\infty ,$ which is just $%
u\in L^{q}(\Bbb{R}^{N};|x|^{aq}dx),$ when $a\neq 0$ the condition $%
|x|^{a}u\in L^{\infty }(\Bbb{R}^{N})$ is not of the type $u\in L^{s}(\mu )$
for some measure $\mu $ on $\Bbb{R}^{N}$ or $\Bbb{R}_{*}^{N}$ and some $%
0<s\leq \infty .$ On the other hand, the space obtained by setting $q=\infty 
$ in (\ref{1}) is recovered if and only if $a=0$ in (\ref{2}). Accordingly, $%
a\neq 0$ is necessary for any embedding. In particular, the results of this
paper do not generalize a property already familiar in classical,
unweighted, Sobolev spaces. They are special to weighted spaces with
nontrivial (power) weights, although $b=0$ is not ruled out.

Some notation must be introduced for the statement of the embedding Theorem 
\ref{th1} below, whose proof is the single purpose of this paper. As is
customary, if $1\leq p<\infty ,$ then 
\begin{equation*}
p^{*}=\infty \text{ if }p\geq N\text{ and }p^{*}=\frac{Np}{N-p}\text{ if }%
1\leq p<N.
\end{equation*}
Next, we denote by $c^{0}$ and $c^{1}$ the two points 
\begin{equation}
c^{0}:=ar-N\qquad \text{and\qquad }c^{1}:=\frac{r(b-p+N)}{p}-N,  \label{4}
\end{equation}
where it is understood that $a,b,p$ and $r$ are given. The points $c^{0}$
and $c^{1}$ are distinct if and only if $ap-N\neq b-p.$ If so and if $c$ is
in the closed interval with endpoints $c^{0}$ and $c^{1},$ we set 
\begin{equation}
\theta _{c}:=\frac{c-c^{0}}{c^{1}-c^{0}},  \label{5}
\end{equation}
so that $\theta _{c}\in [0,1]$ and that 
\begin{equation}
c=\theta _{c}c^{1}+(1-\theta _{c})c^{0}.  \label{6}
\end{equation}
In particular, $\theta _{c^{0}}=0$ and $\theta _{c^{1}}=1$ and, by (\ref{4}%
), (\ref{5}) and (\ref{6}), 
\begin{equation}
\frac{c+N}{r}=\theta _{c}\frac{b-p+N}{p}+(1-\theta _{c})a.  \label{7}
\end{equation}

\begin{theorem}
\label{th1}Let $a,b,c\in \Bbb{R}$ and $1\leq p,r<\infty $ be given ($1\leq
p<\infty $ and $0<r<\infty $ if $N=1$). Then, $W_{\{a,b\}}^{1,(\infty ,p)}(%
\Bbb{R}_{*}^{N})\hookrightarrow L^{r}(\Bbb{R}^{N};|x|^{c}dx)$ (and hence $%
W_{\{a,b\}}^{1,(\infty ,p)}(\Bbb{R}_{*}^{N})\hookrightarrow
W_{\{c,b\}}^{1,(r,p)}(\Bbb{R}_{*}^{N})$) if and only if $a\neq 0$ and one of
the following four conditions holds:\newline
(i) $ap-N$ and $b-p$ are on the same side of $-N$ (including $-N$), $%
ap-N\neq b-p,$ $c$ is in the open interval with endpoints $c^{0}$ and $c^{1}$
and $\theta _{c}\leq \frac{p^{*}}{r}.$\newline
(ii) $ap-N$ and $b-p$ are strictly on opposite sides of $-N$ (hence $%
ap-N\neq b-p$), $c$ is in the open interval with endpoints $c^{0}$ and $-N$
and $\theta _{c}\leq \frac{p^{*}}{r}.$\newline
(iii) $p\leq r\leq p^{*},a(b-p+N)>0$ (i.e., $ap-N$ and $b-p$ are strictly on
the same side of $-N$) and $c=c^{1}.$\newline
(iv) $p<N,r>p^{*},ap-N=b-p$ and $c=c^{1}$ ($=c^{0}$).\newline
Furthermore, when the embedding $W_{\{a,b\}}^{1,(\infty ,p)}(\Bbb{R}%
_{*}^{N})\hookrightarrow L^{r}(\Bbb{R}^{N};|x|^{c}dx)$ holds, it is always
characterized by a multiplicative inequality. Specifically:\newline
(v) If $a\neq 0$ and one of the conditions (i) to (iii) holds with $ap-N\neq
b-p$ (which is already assumed in (i) or (ii)), there is a constant $C>0$
such that 
\begin{equation}
||u||_{c,r}\leq C||\nabla u||_{b,p}^{\theta _{c}}||\,|x|^{a}u||_{\infty
}^{1-\theta _{c}},\qquad \forall u\in W_{\{a,b\}}^{1,(\infty ,p)}(\Bbb{R}%
_{*}^{N}),  \label{8}
\end{equation}
where $\theta _{c}$ is given by (\ref{4}) and (\ref{5}).\newline
(vi) If $a\neq 0,ap-N=b-p$ and if $p\leq r\leq p^{*},c=c^{1}$ ($=c^{0}$),
there is a constant $C>0$ such that 
\begin{equation}
||u||_{c^{1},r}\leq C||\nabla u||_{b,p},\qquad \forall u\in
W_{\{a,b\}}^{1,(\infty ,p)}(\Bbb{R}_{*}^{N}).  \label{9}
\end{equation}
(vii) If $a\neq 0,$ $p<N,ap-N=b-p$ and if $r>p^{*},c=c^{1}$ ($=c^{0}$),
there is a constant $C>0$ such that 
\begin{equation}
||u||_{c^{1},r}\leq C||\nabla u||_{b,p}^{\frac{p^{*}}{r}}||\,|x|^{a}u||_{%
\infty }^{1-\frac{p^{*}}{r}},\qquad \forall u\in W_{\{a,b\}}^{1,(\infty ,p)}(%
\Bbb{R}_{*}^{N}).  \label{10}
\end{equation}
\end{theorem}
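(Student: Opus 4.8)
The plan is to reduce everything to the already-characterized finite-exponent embeddings for $W_{\{a,b\}}^{1,(q,p)}(\Bbb{R}_*^N)$ from \cite{Ra11}, exploiting a scaling/interpolation bridge between the $L^\infty$ condition $\||x|^a u\|_\infty<\infty$ and ordinary weighted $L^q$ conditions. The key observation is that if $\||x|^a u\|_\infty\le M$, then for every $q$ with $0<q<\infty$ one has, formally, $u\in L^q(\Bbb R^N;|x|^{aq-\varepsilon}dx)$ locally and $u\in L^q(\Bbb R^N;|x|^{aq+\varepsilon}dx)$ near infinity, but more usefully one can write an exact identity: for a target pair $(r,c)$ and any $q<r$, $\||x|^{c/r}u\|_r^r \le \||x|^a u\|_\infty^{r-q}\,\||x|^{a}u\|_\infty^{\,0}\cdots$ — more precisely, $|x|^{c}|u|^r = \big(|x|^{a}|u|\big)^{r-q}\,|x|^{c-a(r-q)}|u|^{q}$, so $\|u\|_{c,r}^{r}\le \||x|^a u\|_\infty^{\,r-q}\,\|u\|_{c-a(r-q),\,q}^{\,q}$. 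Thus an embedding $W_{\{a,b\}}^{1,(\infty,p)}\hookrightarrow L^r(|x|^c)$ will follow from a family of finite embeddings $W_{\{a,b\}}^{1,(q,p)}\hookrightarrow L^{q}(|x|^{c-a(r-q)})$ as $q\to$ a suitable endpoint, \emph{provided} the $\||x|^a u\|_\infty$ bound can be converted into the $\|u\|_{a,q}$-type bound needed to apply \cite{Ra11}; this last conversion is exactly where $a\ne0$ becomes indispensable, since for $a=0$ one only controls $\|u\|_\infty$, which carries no decay.

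\textbf{Sufficiency.} First I would set up the scaling normalization: replacing $u$ by $u_\lambda(x):=u(\lambda x)$ shows that any additive inequality $\|u\|_{c,r}\le C(\|\nabla u\|_{b,p}+\||x|^a u\|_\infty)$ forces the scaling identity \eqref{7} (dimensional balance of the three homogeneities), and then the standard ``optimize over $\lambda$'' trick upgrades the additive inequality to the multiplicative ones \eqref{8}, \eqref{9}, \eqref{10} with the exponent $\theta_c$ — so it suffices to prove the additive (or even just the a priori, on a dense-enough subclass) inequalities, and parts (v)–(vii) come for free once (i)–(iv) are established. For the additive inequalities themselves, the plan is: (a) cut $u$ into a piece supported near $0$, a piece supported near $\infty$, and a compactly-supported-away-from-$0$ bulk, using a fixed smooth partition of unity adapted to $|x|$; (b) on each piece, interpolate the $L^\infty$ control with the known CKN-type finite embeddings of \cite{Ra11} along the segment from $c^0$ (the ``pure $L^\infty$'' endpoint, $\theta_c=0$) to $c^1$ (the ``pure gradient'' endpoint, $\theta_c=1$), which is precisely the geometric meaning of the interval with endpoints $c^0,c^1$ in (i)–(iv); (c) track which endpoint regime ($ap-N$ vs $b-p$ relative to $-N$) controls the behavior at $0$ and which at $\infty$ — this is the source of the four cases, exactly as in \cite{Ra11}. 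The constraint $\theta_c\le p^*/r$ in (i), (ii) and in \eqref{10} will drop out of the requirement that the Sobolev/CKN embedding used at the $\theta_c=1$ end actually holds (one cannot push past the Sobolev exponent $p^*$ when $p<N$, hence when $r>p^*$ only $\theta_c=p^*/r$ survives, giving (iv) and \eqref{10}), while $c=c^1$ in (iii), (iv) reflects the degenerate case $a(b-p+N)>0$ where $c^0$ and $c^1$ lie on the same side so the ``interval'' collapses to the useful endpoint $c^1$.

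\textbf{Necessity.} For the converse I would test the putative embedding against explicit functions. The decay rate is pinned down by $u(x)=|x|^{-\alpha}$ cut off smoothly near $0$ and near $\infty$: such $u$ lies in $W_{\{a,b\}}^{1,(\infty,p)}$ iff $\alpha\ge -a$ (from $\||x|^a u\|_\infty<\infty$ near the relevant end) and the gradient condition constrains $\alpha$ against $b,p,N$; requiring $u\in L^r(|x|^c)$ then forces $c$ into the stated closed interval, and sharper test functions (e.g. with a logarithmic factor, or bump functions translated to radius $R$ and rescaled, à la the concentration examples) rule out the endpoints $c^0$, $c^1$ except in cases (iii)–(iv), and produce the inequality $\theta_c\le p^*/r$. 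The necessity of $a\ne0$ is immediate from the constant-function/slowly-varying example already given in the introduction (or a logarithmic cutoff of $\log\log|x|$-type), since for $a=0$ one can exhibit $u\in W_{\{0,b\}}^{1,(\infty,p)}$ with no decay at infinity hence in no $L^r(|x|^c)$.

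\textbf{Main obstacle.} The hard part will be the endpoint analysis: converting the $\||x|^a u\|_\infty$ datum into a form to which the finite-$q$ results of \cite{Ra11} apply \emph{uniformly as $q$ tends to the relevant endpoint}, so that constants do not blow up and the limiting inequality with the sharp $\theta_c$ (and the sharp side conditions $\theta_c\le p^*/r$, $c=c^1$) actually holds rather than only holding for each $q<r$ with a degenerating constant. Equivalently, one must handle the cases where the interpolation segment degenerates ($c^0=c^1$, i.e. $ap-N=b-p$, case (iv)) or where one endpoint is only reached in a limiting sense — here a direct argument (one-dimensional radial reduction plus Hardy-type inequalities, exactly as in \cite{Ra11}) rather than black-box interpolation is likely needed, and matching it cleanly with the $r>p^*$ threshold to extract \eqref{10} is the delicate point.
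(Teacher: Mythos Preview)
Your proposal captures several of the paper's key ingredients: the scaling argument that upgrades additive to multiplicative inequalities is exactly Corollary~\ref{cor3}; the pointwise identity $|x|^{c}|u|^{r}=(|x|^{a}|u|)^{r-q}|x|^{c-a(r-q)}|u|^{q}$ (with $q=p$) is precisely the device of Theorem~\ref{th13}(i); the cutoff into pieces near $0$ and near $\infty$ is the backbone of Theorem~\ref{th8}; and test functions of power type plus translated bumps are what drives Theorem~\ref{th2} and Corollary~\ref{cor4}. So the architecture is largely right.

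There are, however, two genuine gaps. First, your parenthetical ``or even just the a priori, on a dense-enough subclass'' is a trap here: the paper explicitly notes (paragraph after Theorem~\ref{th1}) that density arguments fail for the norm (\ref{3}) because the $L^{\infty}$ piece does not allow approximation by compactly supported functions. Every inequality must be proved directly for arbitrary $u\in W_{\{a,b\}}^{1,(\infty,p)}(\Bbb{R}_{*}^{N})$. Second, and more substantively, your plan for $r>p^{*}$ is to push $q$ to an endpoint ``uniformly'', but this does not work: the constants in the finite-$q$ embeddings of \cite{Ra11} do degenerate, and your interpolation identity requires $q<r$ while the \cite{Ra11} input only goes up to $q=p^{*}$. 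The paper's actual mechanism (Section~\ref{r>p*}) is a \emph{nonlinear} change of variable $u\mapsto |u|^{r/p^{*}}$, which maps $W_{\{a,b\}}^{1,(\infty,p)}$ into $W_{\{a^{*},b^{*}\}}^{1,(\infty,p)}$ with shifted weights (Lemma~\ref{lm14}) and reduces the problem to the already-established case $r=p^{*}$; the constraint $\theta_{c}\le p^{*}/r$ is exactly what makes the shifted parameters $(a^{*},b^{*})$ land in the admissible range for Theorem~\ref{th13}. Without this device your sketch does not close when $p<N$ and $r>p^{*}$.

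A minor correction: your reading of case (iii) is off. The condition $a(b-p+N)>0$ does not mean the interval collapses; it means $ap-N$ and $b-p$ are strictly on the same side of $-N$, so that $c^{0}\neq c^{1}$ in general and $c=c^{1}$ is a genuine endpoint requiring its own argument (Hardy-type inequality after radial symmetrization, Theorem~\ref{th8}(iii)). The collapse $c^{0}=c^{1}$ is the separate condition $ap-N=b-p$, appearing in (iv), (vi), (vii).
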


The value $r=\infty $ is not included in Theorem \ref{th1}. This case
requires a different treatment and will be discussed elsewhere in a more
general framework (\cite{Ra11b}). It is plain that the restriction $\theta
_{c}\leq \frac{p^{*}}{r}$ in (i) and (ii) is only relevant when $p<N$ and $%
r>p^{*}.$

To avoid misunderstandings, it should be stressed that if $u$ is a
distribution on $\Bbb{R}^{N}$ whose restriction to $\Bbb{R}_{*}^{N}$ is in $%
W_{\{a,b\}}^{1,(\infty ,p)}(\Bbb{R}_{*}^{N}),$ Theorem \ref{th1} addresses
only the integrability of $|x|^{c}|u|^{r}$ when $u$ is viewed as a
distribution on $\Bbb{R}_{*}^{N}.$ For instance, if $u=\delta $ (Dirac
delta), then $\delta $ is the $0$ function on $\Bbb{R}_{*}^{N}$ and Theorem 
\ref{th1} with $c=0$ and $r=1$ implies the trivial $0\in L^{1}(\Bbb{R}^{N})$
but not the absurd $\delta \in L^{1}(\Bbb{R}^{N}).$ It is only when $u\in
L_{loc}^{1}(\Bbb{R}^{N})$ that the integrability of $|x|^{c}|u|^{r}$ is
independent of whether $u$ is viewed as a distribution on $\Bbb{R}^{N}$ or $%
\Bbb{R}_{*}^{N}.$

The necessity of the conditions given in Theorem \ref{th1} is proved in the
next section, where it is also shown that the inequality (\ref{8}) in part
(v) holds if the sufficiency of $a\neq 0$ together with (i), (ii) or (iii)
is assumed (Corollary \ref{cor3}).

Since the norm (\ref{3}) incorporates a supremum norm, a classical two-step
approach to the sufficiency, first for some subclass of functions with
bounded supports, followed by a denseness argument, is clearly hopeless in
general\footnote{%
It also fails when $q<\infty ,$ though for less obvious reasons; see \cite
{Ra11}.}. For example, if $b-p<ap-N$ and $u$ is a smooth function on $\Bbb{R}%
^{N}$ vanishing on a neighborhood of $0$ and equal to $|x|^{-a}$ for large $%
|x|,$ then $u\in $ $W_{\{a,b\}}^{1,(\infty ,p)}(\Bbb{R}_{*}^{N}),$ but $u$
cannot be approximated by functions of $W_{\{a,b\}}^{1,(\infty ,p)}(\Bbb{R}%
_{*}^{N})$ with bounded support since $||\,|x|^{a}(u-v)||_{\infty }\geq 1$
for every such function $v.$

The sufficiency of $a\neq 0$ plus one of the conditions (i) to (iv) is
proved in four steps, after the short Section \ref{background} of background
material. The case $r=p$ is resolved first (Section \ref{r=p}, Theorem \ref
{th8}) and used to handle $1\leq r<p$ (Section \ref{1<r<p}, Theorem \ref
{th10}) and $p<r\leq p^{*}$ (Section \ref{p<r<p*}, Theorem \ref{th13}). All
three proofs also rely upon various special cases of the main embedding
theorem in \cite{Ra11}. Lastly, when $1\leq p<N$ and $r>p^{*},$ the
embedding is deduced from the case $r=p^{*}$ through a nonlinear ``change of
variable''. The (necessary) restriction $\theta _{c}\leq \frac{p^{*}}{r}$ is
crucial to the success of this procedure (Section \ref{r>p*}, Theorem \ref
{th16}). The multiplicative inequalities (\ref{9}) and (\ref{10}) are proved
in Theorems \ref{th8}, \ref{th13} and \ref{th16}.

When $1\leq r\leq p$ (Sections \ref{r=p} and \ref{1<r<p}), the embedding
theorem is actually stronger than Theorem \ref{th1} since it proves the
embedding into $L^{r}(\Bbb{R}^{N};|x|^{c}dx)$ of the \emph{larger} space 
\begin{multline}
\widetilde{W}_{\{a,b\}}^{1,(\infty ,p)}:=  \label{11} \\
\{u\in L_{loc}^{1}(\Bbb{R}_{*}^{N}):|x|^{a}u\in L^{\infty }(\Bbb{R}%
^{N}),\quad \partial _{\rho }u\in L^{p}(\Bbb{R}^{N};|x|^{b}dx)\},
\end{multline}
with the \emph{weaker} norm\footnote{%
We shall not need a notation for the norm of $W_{\{a,b\}}^{1,(\infty ,p)}(%
\Bbb{R}_{*}^{N})$ in (\ref{3}).} 
\begin{equation}
||u||_{\{a,b\},(\infty ,p)}:=||\,|x|^{a}u||_{\infty }+||\partial _{\rho
}u||_{b,p},  \label{12}
\end{equation}
where, in (\ref{11}) and (\ref{12}), $\partial _{\rho }u=\nabla u\cdot \frac{%
x}{|x|}$ denotes the radial derivative of $u$ (well defined for every
distribution on $\Bbb{R}_{*}^{N}).$ Thus, the embedding requires no
integrability assumption about the first derivatives, except for the radial
one. This is no longer true when $r>p,$ when the embedding is only proved
for the space $W_{\{a,b\}}^{1,(\infty ,p)}(\Bbb{R}_{*}^{N}).$

The multiplicative inequalities (\ref{8}), (\ref{9}) and (\ref{10}) are
extensions of the CKN inequalities \cite{CaKoNi84} since, in addition to
requiring $u\in C_{0}^{\infty }(\Bbb{R}^{N}),$ the latter do not incorporate
supremum norms. The same thing can be said of related inequalities of Maz'ya 
\cite[Theorem 9]{Ma73}, \cite[p.127]{Ma85} (see also the expanded text \cite
{Ma11}), more general but less explicit than the CKN inequalities.

As a more concrete example, the real numbers $a,b$ and $1\leq p,r<\infty $
such that $W_{\{a,b\}}^{1,(\infty ,p)}(\Bbb{R}_{*}^{N})$ is continuously
embedded in the unweighted space $L^{r}(\Bbb{R}^{N})$ (so that $c=0$) are
characterized in Section \ref{example} (Theorem \ref{th17}). When $N=1,r\geq
p$ and $b=p\left( 1+\frac{1}{r}-\frac{1}{p}\right) $ (and $a>0,c=0$) we show
that Theorem \ref{th17} can be deduced from a weighted Hardy-type inequality
of Bradley \cite{Br78}, or even by a simple integration by parts if $%
a=b=p=r=1.$ We also show that if $b=c=0$ (Corollary \ref{cor18}), another
proof can be derived from Sobolev's inequality irrespective of $N.$

Even though the examples of Section \ref{example} show that there is no
doubt that the inequalities of this paper must be known for some values of
the parameters, no systematic investigation seems to be on record, even for $%
C_{0}^{\infty }(\Bbb{R}^{N})$ or $C_{0}^{\infty }(\Bbb{R}_{*}^{N})$ and/or $%
N=1.$ On the other hand, for functions of $C_{0}^{\infty }(\Bbb{R}_{*}^{N}),$
the multiplicative inequalities, including those of \cite{Ra11}, hold for a
wider range of parameters than stipulated in Theorem \ref{th1}. No proof of
this claim will be given here, but when $p=q=r=2$ and $c=\frac{a+b}{2}-1,$ a
result of this type was recently obtained by Catrina and Costa \cite{CaCo09}.

The last section of \cite{Ra11} explains, in broad terms, how the embedding
theorem of that paper can be used to prove more general ones when the
weights have power-like singularities at a finite number of points and at
infinity. The interested reader should have no difficulty to see how Theorem 
\ref{th1} above fits into that discussion.

\begin{remark}
\label{rm1} Up to and including Section \ref{r=p}, the following will be
used repeatedly: The Kelvin transform $x\in \Bbb{R}_{*}^{N}\mapsto
x|x|^{-2}\in \Bbb{R}_{*}^{N}$ induces an isometry from $\widetilde{W}
_{\{a,b\}}^{1,(\infty ,p)}$ ($W_{\{a,b\}}^{1,(\infty ,p)}(\Bbb{R}_{*}^{N})$)
onto $\widetilde{W}_{\{-a,2p-2N-b\}}^{1,(\infty ,p)}$ ($W_{\{-a,2p-2N-b%
\}}^{1,(\infty ,p)}(\Bbb{R}_{*}^{N})$) and from $L^{r}(\Bbb{R}%
^{N};|x|^{c}dx) $ onto $L^{r}(\Bbb{R}^{N};|x|^{-2N-c}dx).$ In practice, this
will be helpful to shorten proofs when two sets of assumptions about $a$ and 
$b$ are exchanged into one another by Kelvin transform.
\end{remark}

Everywhere in the paper, $C>0$ denotes a constant whose value may not be the
same in different places. Also, $\zeta \in C_{0}^{\infty }(\Bbb{R}^{N})$ is
chosen once and for all such that $0\leq \zeta \leq 1,$ $\zeta (x)=1$ if $%
|x|\leq \frac{1}{2}$ and $\zeta (x)=0$ if $|x|\geq 1.$

\section{Necessity\label{necessary}}

In this section, we prove that the hypotheses of Theorem \ref{th1} are
necessary in the more general case when $r>0$ and $N$ is arbitrary; recall $%
r\geq 1$ is assumed in Theorem \ref{th1} when $N>1.$

\begin{theorem}
\label{th2}Let $a,b,c\in \Bbb{R}$ and $1\leq p<\infty ,0<r<\infty $ be
given. Then, $W_{\{a,b\}}^{1,(\infty ,p)}(\Bbb{R}_{*}^{N})$ (hence \textit{a
fortiori} $\widetilde{W}_{\{a,b\}}^{1,(\infty ,p)}$) is not contained $L^{r}(%
\Bbb{R}^{N};|x|^{c}dx)$ if one of the following conditions holds:\newline
(i) $a=0.$\newline
(ii) $c$ does not belong to the closed interval with endpoints $c^{0}$ and $%
c^{1}.$\newline
(iii) $ap-N\neq b-p,$ $c=c^{0}$.\newline
(iv) $b-p\leq -N,a>0$ or $b-p\geq -N,a<0$ and $c$ does not belong to the
open interval with endpoints $c^{0}$ and $-N.$\newline
Furthermore, $W_{\{a,b\}}^{1,(\infty ,p)}(\Bbb{R}_{*}^{N})$ (hence \textit{a
fortiori} $\widetilde{W}_{\{a,b\}}^{1,(\infty ,p)}$) is not continuously%
\footnote{%
In principle, this does not rule out $W_{\{a,b\}}^{1,(\infty ,p)}(\Bbb{R}
_{*}^{N})\subset L^{r}(\Bbb{R}^{N};|x|^{c}dx).$} embedded into $L^{r}(\Bbb{R}%
^{N};|x|^{c}dx)$ if:\newline
(v) $c=c^{1}$ and $r<p.$\newline
\end{theorem}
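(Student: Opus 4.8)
The plan is to exhibit, for each of the five conditions, an explicit element of $W_{\{a,b\}}^{1,(\infty,p)}(\Bbb{R}_{*}^{N})$ (or a family thereof) that witnesses the failure of inclusion or continuity. All the test functions will be built from radial power functions $|x|^{-s}$, suitably truncated near $0$ or $\infty$ by the cut-off $\zeta$ fixed at the end of the introduction, so that the only issue is to compute when $|x|^{a}u\in L^{\infty}$, when $\partial_\rho u\in L^{p}(\Bbb{R}^{N};|x|^{b}dx)$, and when $|x|^{c}|u|^{r}$ fails to be integrable. The membership conditions for $|x|^{-s}\chi$ (with $\chi$ localizing near $0$ or near $\infty$) in a weighted Lebesgue space are the standard ones: near the origin one needs the exponent of $|x|$ to exceed $-N$, near infinity to be below $-N$; I would state this once as an elementary lemma (or simply invoke it from \secref{background}) and then reuse it.

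For (i) $a=0$: here $|x|^{a}u=u\in L^{\infty}$, so the constant function $u\equiv 1$ lies in $W_{\{0,b\}}^{1,(\infty,p)}(\Bbb{R}_{*}^{N})$ for all $b,p$, yet $|x|^{c}\cdot 1\notin L^{r}(\Bbb{R}^{N})$ for any $c,r$ with $r<\infty$, exactly as noted in the introduction. For (ii) and (iii), the natural candidates are one-parameter families $u_{\varepsilon}=|x|^{-s}$ cut off near $0$ (if $s<a$) or near $\infty$ (if $s>a$), chosen so that $|x|^{a}u$ stays bounded; the condition $\partial_\rho u\in L^{p}(|x|^{b}dx)$ then pins $s$ to lie on one side of, or at, the value $(b-p+N)/p$, and the requirement that $|x|^{c}|u|^{r}$ be integrable forces $c$ to lie strictly between $c^{0}=ar-N$ and $c^{1}=r(b-p+N)/p-N$ — so picking $c$ outside the closed interval (case (ii)) or at the endpoint $c^{0}$ with $c^{0}\neq c^{1}$ (case (iii), where approaching $s\to a$ is exactly the borderline that fails) produces a function in the Sobolev space but not in $L^{r}(|x|^{c}dx)$. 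Case (iv) is the "opposite sides of $-N$" situation: when $b-p$ and $ap-N$ lie strictly on opposite sides of $-N$, a function behaving like $|x|^{-a}$ on one end and decaying (or growing slowly) on the other can have its radial derivative in $L^{p}(|x|^{b}dx)$ while $|x|^{c}|u|^{r}$ is integrable only if $c$ is between $c^{0}$ and $-N$; I would use the Kelvin transform of Remark \ref{rm1} to reduce the sign cases $b-p\le -N,\ a>0$ and $b-p\ge -N,\ a<0$ to a single one.

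The last assertion, case (v) — $c=c^{1}$ and $r<p$ gives no \emph{continuous} embedding even though inclusion may hold — is the one I expect to be the main obstacle, since it is not a failure of inclusion but of boundedness of the embedding map. The idea is to construct a sequence $u_{n}\in W_{\{a,b\}}^{1,(\infty,p)}(\Bbb{R}_{*}^{N})$ with $\|u_{n}\|_{\{a,b\},(\infty,p)}$ bounded (or $\to 0$) while $\|u_{n}\|_{c^{1},r}$ stays bounded away from $0$ (or $\to\infty$). A concordant choice is a sum of bumps $u_{n}=\sum_{k=1}^{n}\lambda_{k}\,\varphi(|x|/R_{k})$ placed at widely separated scales $R_{k}$ (e.g. $R_{k}=2^{k}$ or a lacunary sequence), where $\varphi$ is a fixed smooth bump supported in an annulus; on the scale-$c^{1}$ line the $L^{p}(|x|^{b}dx)$ norm of the radial derivative of each summand is comparable to $\lambda_{k}$ times a power of $R_{k}$ that, by the choice $c=c^{1}$, is \emph{scale-invariant}, so the $p$-th powers add up like $\sum\lambda_{k}^{p}$, whereas the $L^{r}(|x|^{c^{1}}dx)$ norm adds up like $(\sum\lambda_{k}^{r})^{1/r}$; since $r<p$, choosing $\lambda_{k}=k^{-1/r}$ (say) makes the $\ell^{p}$ sum converge while the $\ell^{r}$ sum diverges, defeating any additive — hence any multiplicative — inequality. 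One must check that the supremum term $\||x|^{a}u_{n}\|_{\infty}$ also stays controlled, which it does because the bumps live on disjoint annuli and each contributes $\lambda_{k}R_{k}^{a}$ times a constant; if $a$ makes these grow one renormalizes or shifts the scales, the freedom in choosing $R_{k}$ absorbing this. The care needed to make the three norms scale compatibly at $c=c^{1}$, and to handle the case $b-p=-N$ where the "annular" scale-invariance degenerates, is where the argument requires attention; everywhere else it is routine power-counting.
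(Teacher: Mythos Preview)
Parts (i)--(iv) of your plan match the paper's proof: the same truncated power functions $|x|^{-(c+N)/r}\zeta$ and $|x|^{-(c+N)/r}(1-\zeta)$ for (ii)--(iii), and the same Kelvin reduction for (iv). In (iv) the paper is slightly more economical than your sketch: after reducing to $a>0,\,b-p\le-N$, the range $c\le-N$ is handled by $u=\zeta$ itself (it equals $1$ near the origin, so $|x|^{c}\zeta^{r}$ is non-integrable there), rather than a function behaving like $|x|^{-a}$.

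For (v) the routes genuinely differ. The paper argues by contradiction via dilation: replacing $u$ by $u(\lambda\,\cdot)$ and letting $\lambda\to0$ or $\infty$ eliminates the $\|\,|x|^{a}u\|_{\infty}$ term when $ap-N\neq b-p$, reducing to a one-dimensional Hardy-type inequality $\|f\|_{c^{1}+N-1,r}\le C\|f'\|_{b+N-1,p}$ known to fail for $r<p$ from \cite{Ra11}; in the scale-invariant case $ap-N=b-p$ it uses the substitution $f(t)=t^{-a}g(\ln t)$ to pass to an unweighted inequality on $\Bbb{R}$, again contradicted by rescaling $g$. Your bump-sum construction is a valid and more self-contained alternative that handles both subcases uniformly, but one statement needs correction: the individual norms are \emph{not} scale-invariant. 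What is true is that $\|\nabla(\varphi(\cdot/R))\|_{b,p}$ and $\|\varphi(\cdot/R)\|_{c^{1},r}$ both carry the \emph{same} factor $R^{(b-p+N)/p}$, so after reparametrizing by $\mu_{k}:=\lambda_{k}R_{k}^{(b-p+N)/p}$ the gradient and target norms become $C(\sum\mu_{k}^{p})^{1/p}$ and $C'(\sum\mu_{k}^{r})^{1/r}$, and any $(\mu_{k})\in\ell^{p}\setminus\ell^{r}$ finishes the job. The supremum term is then $C''\sup_{k}\mu_{k}R_{k}^{a-(b-p+N)/p}$: controlled by sending the lacunary $R_{k}$ to $0$ or $\infty$ when $ap-N\neq b-p$, and equal to $C''\sup_{k}\mu_{k}$ (automatically bounded) when $ap-N=b-p$. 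Your worry about $b-p=-N$ is unfounded, since then $a\neq 0=(b-p+N)/p$ and the first alternative applies.
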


\begin{proof}
(i) $u=1$ provides a counterexample.

(ii) If $c<\min \left\{ c^{0},c^{1}\right\} ,$ let $u(x):=|x|^{-\frac{c+N}{r}
}\zeta (x).$ Then, $u\notin L^{r}(\Bbb{R}^{N};|x|^{c}dx)$ since $%
|x|^{c}|u(x)|^{r}=|x|^{-N}$ on a neighborhood of $0,$ but $u\in
W_{\{a,b\}}^{1,(\infty ,p)}(\Bbb{R}_{*}^{N})$ since $a-\frac{c+N}{r}>0,b-p-%
\frac{p(c+N)}{r}>-N,\zeta $ has compact support and $\nabla \zeta $ has
compact support and vanishes on a neighborhood of $0.$

If $c>\max \{c^{0},c^{1}\},$ let $u(x):=|x|^{-\frac{c+N}{r}}(1-\zeta (x))$
and argue as above, with obvious modifications.

(iii) If $ap-N\neq b-p,$ then $c^{1}\neq c^{0}$ and the argument of (ii)
continues to work when $c=c^{0}.$

(iv) By Kelvin transform (Remark \ref{rm1}), it suffices to consider $%
b-p\leq -N$ and $a>0.$ If so, $ap-N>b-p$ and $c^{1}\leq -N<c^{0},$ so that $%
W_{\{a,b\}}^{1,(\infty ,p)}(\Bbb{R}_{*}^{N})\nsubseteq L^{r}(\Bbb{R}
^{N};|x|^{c}dx)$ if $c\geq c^{0}$ by (ii) and (iii). If now $c\leq -N,$ then 
$\zeta \notin L^{r}(\Bbb{R}^{N};|x|^{c}dx)$ since $\zeta =1$ on a
neighborhood of $0$ but $\zeta \in W_{\{a,b\}}^{1,(\infty ,p)}(\Bbb{R}%
_{*}^{N})$ since $a>0,\zeta $ has compact support and $\nabla \zeta $ has
compact support and vanishes on a neighborhood of $0.$

(v) The argument is different when $ap-N\neq b-p$ and when $ap-N=b-p.$

\textit{Case (v-{\scriptsize 1}):} $ap-N\neq b-p.$

By Kelvin transform and part (i), it suffices to consider the case when $%
a<0. $ By contradiction, if $W_{\{a,b\}}^{1,(\infty ,p)}(\Bbb{R}
_{*}^{N})\hookrightarrow L^{r}(\Bbb{R}^{N};|x|^{c^{1}}dx),$ then $%
||u||_{c^{1},r}\leq C(||\,|x|^{a}u||_{\infty }+||\nabla u||_{b,p})$ for
every $u\in W_{\{a,b\}}^{1,(\infty ,p)}(\Bbb{R}_{*}^{N}).$ Upon replacing $%
u(x)$ by $u(\lambda x)$ with $\lambda >0,$ this yields $||u||_{c^{1},r}\leq
C(\lambda ^{k}||\,|x|^{a}u||_{\infty }+||\nabla u||_{b,p})$ where $k:=\frac{%
b-p+N}{p}-a\neq 0$ (this uses $c^{1}=\frac{r(b-p+N)}{p}-N$). Thus, $%
||u||_{c^{1},r}\leq C||\nabla u||_{b,p}$ by letting $\lambda $ tend to $0$
or $\infty .$ In particular, this holds when $u(x)=f(|x|)$ with $f\in
W_{loc}^{1,p}(0,\infty ),f\geq 0,f=0$ on a neighborhood of $0$ and $f=M$
(constant) on a neighborhood of $\infty $ (if so, $u(x)=f(|x|)$ is in $%
W_{\{a,b\}}^{1,(\infty ,p)}(\Bbb{R}_{*}^{N})$ irrespective of $a<0,b\in \Bbb{%
R}$ and $p\geq 1$) and so $||f||_{c^{1}+N-1,r}\leq C||f^{\prime
}||_{b+N-1,p} $ for every such $f.$ That it is not so when $0<r<p$ is shown
in the proof of \cite[Theorem 2.1 (iv)]{Ra11}.

\textit{Case (v-{\scriptsize 2}):} $ap-N=b-p.$

If so, $c^{1}=ar-N$ ($=c^{0}$) and a direct rescaling as above is
inoperative. By contradiction, if $||u||_{c^{1},r}\leq
C(||\,|x|^{a}u||_{\infty }+||\nabla u||_{b,p})$ for every $u\in
W_{\{a,b\}}^{1,(\infty ,p)}(\Bbb{R}_{*}^{N}),$ this inequality holds when $%
u(x)=f(|x|)$ with $f\in C_{0}^{\infty }(0,\infty )$ and then $%
||f||_{ar-1,r}\leq C(||t^{a}f||_{\infty }+||f^{\prime }||_{ap+p-1,p}),$
where $b=ap+p-N$ was used. Every such $f$ has the form $f(t)=t^{-a}g(\ln t)$
with $g\in C_{0}^{\infty }(\Bbb{R}),$ whence $||g||_{r}\leq C(||g||_{\infty
}+||g||_{p}+||g^{\prime }||_{p})$ (unweighted inequality) by the change of
variable $\ln t=s.$

By choosing $g\neq 0$ and replacing $g(s)$ by $g(\lambda s)$ with $\lambda
>0,$ it follows that $\lambda ^{-\frac{1}{r}}I_{1}\leq C(I_{2}+\lambda ^{-%
\frac{1}{p}}I_{3}+\lambda ^{\frac{1}{p^{\prime }}}I_{4}),$ where $%
I_{1},...,I_{4}>0$ are independent of $\lambda >0.$ This requires $\frac{1}{r%
}\leq \frac{1}{p}$ and so $r\geq p.$ In other words, the embedding cannot be
continuous if $r<p.$
\end{proof}

As a corollary, we find that when $ap-N\neq b-p,$ the embedding is
characterized by a multiplicative, rather than just additive, norm
inequality:

\begin{corollary}
\label{cor3}Let $a,b,c\in \Bbb{R}$ and $1\leq p<\infty ,0<r<\infty ,$ be
such that $ap-N\neq b-p.$ Then, $W_{\{a,b\}}^{1,(\infty ,p)}(\Bbb{R}%
_{*}^{N})\hookrightarrow L^{r}(\Bbb{R}^{N};|x|^{c}dx)$ if and only if $a\neq
0,c$ is in the closed interval with endpoints $c^{0}$ and $c^{1}$ and there
is $C>0$ such that 
\begin{equation}
||u||_{c,r}\leq C||\nabla u||_{b,p}^{\theta _{c}}||\,|x|^{a}u||_{\infty
}^{1-\theta _{c}},\qquad \forall u\in W_{\{a,b\}}^{1,(\infty ,p)}(\Bbb{R}%
_{*}^{N}),  \label{13}
\end{equation}
where $\theta _{c}$ is given by (\ref{4}) and (\ref{5}). The same property
is true upon replacing $W_{\{a,b\}}^{1,(\infty ,p)}(\Bbb{R}_{*}^{N})$ by $%
\widetilde{W}_{\{a,b\}}^{1,(\infty ,p)}$ and (\ref{13}) by 
\begin{equation}
||u||_{c,r}\leq C||\partial _{\rho }u||_{b,p}^{\theta
_{c}}||\,|x|^{a}u||_{\infty }^{1-\theta _{c}},\qquad \forall u\in \widetilde{%
W}_{\{a,b\}}^{1,(\infty ,p)}.  \label{14}
\end{equation}
\end{corollary}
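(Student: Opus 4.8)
The plan is to establish the two directions separately: the sufficiency of the multiplicative inequality is essentially Young's inequality, while the converse combines the necessity statements of Theorem~\ref{th2} with a one-parameter scaling argument. For the easy direction, suppose $a\neq 0$, that $c$ lies in the closed interval with endpoints $c^{0}$ and $c^{1}$ (so $\theta_{c}\in[0,1]$ is well defined by (\ref{5})), and that (\ref{13}) holds. The elementary inequality $s^{\theta_{c}}t^{1-\theta_{c}}\leq\theta_{c}s+(1-\theta_{c})t\leq s+t$, valid for all $s,t\geq 0$, then turns (\ref{13}) into $||u||_{c,r}\leq C(||\nabla u||_{b,p}+||\,|x|^{a}u||_{\infty})$ for every $u\in W_{\{a,b\}}^{1,(\infty,p)}(\Bbb{R}_{*}^{N})$, which is exactly the continuity of $W_{\{a,b\}}^{1,(\infty,p)}(\Bbb{R}_{*}^{N})\hookrightarrow L^{r}(\Bbb{R}^{N};|x|^{c}dx)$; running the same computation with $\partial_{\rho}u$ in place of $\nabla u$ gives the $\widetilde{W}$ version.

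For the converse, I would start from the assumed embedding. Since $ap-N\neq b-p$, parts (i)--(iii) of Theorem~\ref{th2} show at once that $a\neq 0$, that $c$ lies in the closed interval with endpoints $c^{0}$ and $c^{1}$, and that $c\neq c^{0}$; hence $c^{0}\neq c^{1}$ and $\theta_{c}\in(0,1]$. The embedding gives a constant $C$ with $||u||_{c,r}\leq C(||\nabla u||_{b,p}+||\,|x|^{a}u||_{\infty})$ for all $u$, and, since $W_{\{a,b\}}^{1,(\infty,p)}(\Bbb{R}_{*}^{N})$ is stable under the dilations $u\mapsto u_{\lambda}:=u(\lambda\,\cdot)$ ($\lambda>0$), I would apply this to $u_{\lambda}$ and substitute the scaling relations $||\,|x|^{a}u_{\lambda}||_{\infty}=\lambda^{-a}||\,|x|^{a}u||_{\infty}$, $||\nabla u_{\lambda}||_{b,p}=\lambda^{1-(b+N)/p}||\nabla u||_{b,p}$, $||u_{\lambda}||_{c,r}=\lambda^{-(c+N)/r}||u||_{c,r}$. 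Multiplying by $\lambda^{(c+N)/r}$ and rewriting the exponents with the help of (\ref{7}) yields
\[
||u||_{c,r}\leq C\bigl(\lambda^{-(1-\theta_{c})k}\,||\nabla u||_{b,p}+\lambda^{\theta_{c}k}\,||\,|x|^{a}u||_{\infty}\bigr),\qquad\forall\,\lambda>0,
\]
where $k:=\tfrac{b-p+N}{p}-a$ and $k\neq 0$ precisely because $ap-N\neq b-p$. If $\theta_{c}=1$, the first exponent is $0$ and letting $\lambda\to 0$ or $\lambda\to\infty$ (according to the sign of $k$) removes the supremum term, giving (\ref{13}). If $\theta_{c}\in(0,1)$, it suffices to treat $u\neq 0$, for which $||\,|x|^{a}u||_{\infty}>0$ and also $||\nabla u||_{b,p}>0$ (otherwise $\nabla u=0$ a.e., so $u$ is locally constant on $\Bbb{R}_{*}^{N}$, hence $0$ because $a\neq 0$); choosing $\lambda$ with $\lambda^{k}=||\nabla u||_{b,p}/||\,|x|^{a}u||_{\infty}$ makes the two right-hand terms coincide and equal $||\nabla u||_{b,p}^{\theta_{c}}||\,|x|^{a}u||_{\infty}^{1-\theta_{c}}$, which is (\ref{13}) with constant $2C$. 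The $\widetilde{W}$ statement follows identically, since $\partial_{\rho}u_{\lambda}=\lambda(\partial_{\rho}u)(\lambda\,\cdot)$ scales the same way, Theorem~\ref{th2} is also stated for $\widetilde{W}_{\{a,b\}}^{1,(\infty,p)}$, and $\partial_{\rho}u=0$ with $a\neq 0$ again forces $u=0$.

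The step I expect to require the most care is the exponent bookkeeping: one must verify through (\ref{4})--(\ref{7}) that the powers of $\lambda$ attached to $||\nabla u||_{b,p}$ and $||\,|x|^{a}u||_{\infty}$ are exactly $-(1-\theta_{c})k$ and $\theta_{c}k$ for one and the same nonzero $k$, since it is this structure that allows the minimization over the single parameter $\lambda$ to reproduce the geometric mean with the correct weight $\theta_{c}$ --- and it is here that the hypothesis $ap-N\neq b-p$ is indispensable. Everything else is routine.
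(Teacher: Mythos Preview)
Your proof is correct and follows essentially the same route as the paper: Young/AM--GM for sufficiency, then Theorem~\ref{th2} plus a one-parameter dilation and optimization for the converse, with the same exponent identity you highlight. The one soft spot is the last clause for the $\widetilde{W}$ case: ``$\partial_\rho u=0$ with $a\neq 0$ again forces $u=0$'' is true, but not by the same reasoning as for $\nabla u=0$ (vanishing radial derivative only gives constancy along a.e.\ ray, not local constancy); the paper sidesteps this by observing that when $\partial_\rho u=0$ and $\theta_c>0$ the scaled inequality itself collapses to $||u||_{c,r}\leq C\lambda^{\theta_c k}||\,|x|^{a}u||_\infty$, and then $\lambda\to 0$ or $\infty$ gives $u=0$ with no extra input.
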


\begin{proof}
In both cases, the sufficiency follows from the generalized
arithmetic-geometric inequality. We prove the necessity for $\widetilde{W}
_{\{a,b\}}^{1,(\infty ,p)}.$ Similar arguments work in the case of $%
W_{\{a,b\}}^{1,(\infty ,p)}(\Bbb{R}_{*}^{N}).$

That $a\neq 0$ is necessary was shown in part (i) of Theorem \ref{th2}.
Suppose then $a\neq 0$ and $W_{\{a,b\}}^{1,(\infty ,p)}(\Bbb{R}%
_{*}^{N})\hookrightarrow L^{r}(\Bbb{R}^{N};|x|^{c}dx).$ By part (ii) of
Theorem \ref{th1}, $c$ is in the closed interval with (distinct) endpoints $%
c^{0}$ and $c^{1}.$ Furthermore, $||u||_{c,r}\leq C(||\,|x|^{a}u||_{\infty
}+||\partial _{\rho }u||_{b,p})$ for every $u\in \widetilde{W}%
_{\{a,b\}}^{1,(\infty ,p)}.$ In this inequality, replace $u(x)$ by $%
u(\lambda x)$ with $\lambda >0$ to get 
\begin{multline}
||u||_{c,r}\leq C\lambda ^{\frac{c+N}{r}-a}||\,|x|^{a}u||_{\infty }+C\lambda
^{\frac{c+N}{r}-\frac{b-p+N}{p}}||\partial _{\rho }u||_{b,p}=  \label{15} \\
C\lambda ^{\theta _{c}\frac{c^{1}-c^{0}}{r}}||\,|x|^{a}u||_{\infty
}+C\lambda ^{(1-\theta _{c})\frac{c^{0}-c^{1}}{r}}||\partial _{\rho
}u||_{b,p}.
\end{multline}

If $c=c^{0}$ ($c=c^{1}$), then $\theta _{c}=0$ ($\theta _{c}=1$), so that $%
||u||_{c,r}\leq C||\,|x|^{a}u||_{\infty }$ ($||u||_{c,r}\leq C||\partial
_{\rho }u||_{b,p}$), i.e., (\ref{14}) holds, by letting $\lambda $ tend to $%
0 $ or to $\infty .$ Otherwise, (\ref{14}) follows by minimizing the
right-hand side of (\ref{15}) for $\lambda >0.$ This changes $C,$ which
however remains independent of $u$ even though the minimizer is of course $u$%
-dependent. In that regard, observe that if $\theta _{c}>0,$ it follows from
(\ref{15}) that $u=0$ if $\partial _{\rho }u=0,$ once again by letting $%
\lambda $ tend to $0$ or to $\infty .$ Thus, it is not restrictive to assume 
$||\,|x|^{a}u||_{\infty }>0$ and $||\partial _{\rho }u||_{b,p}>0$ in the
minimization step.
\end{proof}

The next corollary gives an additional necessary condition for the
continuity of the embedding when $r>p^{*}.$

\begin{corollary}
\label{cor4}Let $a,b,c\in \Bbb{R}$ and $1\leq p<N,p^{*}<r<\infty $ be given.
If $ap-N\neq b-p$ and $W_{\{a,b\}}^{1,(\infty ,p)}(\Bbb{R}%
_{*}^{N})\hookrightarrow L^{r}(\Bbb{R}^{N};|x|^{c}dx),$ then $\theta
_{c}\leq \frac{p^{*}}{r}<1.$
\end{corollary}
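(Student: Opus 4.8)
The plan is to extract the restriction $\theta_c \le p^*/r$ from a scaling argument applied to the additive embedding inequality, after first using the earlier results to pin down that $a \ne 0$ and that $c$ lies in the closed interval with endpoints $c^0$ and $c^1$. Since $ap - N \ne b - p$, these endpoints are distinct, so $\theta_c \in [0,1]$ is well defined. By Corollary \ref{cor3}, the embedding is equivalent to the multiplicative inequality (\ref{13}), namely $\|u\|_{c,r} \le C\|\nabla u\|_{b,p}^{\theta_c}\,\|\,|x|^a u\|_\infty^{1-\theta_c}$ for all $u \in W_{\{a,b\}}^{1,(\infty,p)}(\Bbb{R}_*^N)$. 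The strategy is to test this inequality on a family of functions whose $L^r(|x|^c dx)$ norm, radial-gradient $L^p(|x|^b dx)$ norm, and weighted supremum norm can all be computed, and whose behavior is governed by the critical Sobolev exponent $p^*$ when $p < N$.

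First I would reduce, via the Kelvin transform of Remark \ref{rm1} together with part (i) of Theorem \ref{th2}, to a convenient sign of $a$ (say $a < 0$), so that I may build test functions supported away from the origin, or near it, depending on which endpoint $c^0,c^1$ is relevant; the Kelvin transform exchanges $(a,b) \leftrightarrow (-a, 2p-2N-b)$ and $c \leftrightarrow -2N-c$, and one checks it preserves the quantity $r - p^*$ and maps $\theta_c$ appropriately, so no generality is lost. Next I would take a standard $p^*$-extremal-type profile: a smooth radial bump, or more precisely a function behaving like $(1+|x|^2)^{-(N-p)/p}$ (the Aubin--Talenti type profile realizing the critical Sobolev embedding $\dot W^{1,p} \hookrightarrow L^{p^*}$), suitably dilated by $\lambda > 0$ via $u_\lambda(x) = u(\lambda x)$. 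The point is that for such a profile $\|\nabla u_\lambda\|_p$ and $\|u_\lambda\|_{p^*}$ scale with the \emph{same} power of $\lambda$ (this is exactly the conformal invariance underlying $p^*$), while the weighted norms $\|u_\lambda\|_{c,r}$ and $\|\,|x|^a u_\lambda\|_\infty$ pick up powers $\lambda^{-(c+N)/r + a}$-type corrections that, through (\ref{7}), encode $\theta_c$. Plugging $u_\lambda$ into (\ref{13}) and extracting the $\lambda$-exponent on each side yields an inequality of the form $\alpha \le \theta_c\,\beta + (1-\theta_c)\,\gamma$ that can only hold for all $\lambda > 0$ — equivalently forces equality of exponents — when $\theta_c \le p^*/r$.

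More concretely, I would compute: for a radial profile $f(|x|)$ whose gradient is concentrated (in the $L^p(|x|^b dx)$ sense) at a single scale that we send to the critical regime, the ratio $\|\,|x|^a u\|_\infty^{} / \|u\|_{c,r}$ degenerates like a power of a concentration parameter $\varepsilon$, whereas $\|\nabla u\|_{b,p}/\|u\|_{c,r}$ degenerates like the $p^*/r$-th power of that same parameter (because $r > p^*$ makes the $L^r$ norm "see" less of the concentration than the critical $L^{p^*}$ norm would). Inserting this into (\ref{13}) gives $1 \le C\,\varepsilon^{\,p^*\theta_c/r}\,\varepsilon^{\,\text{(something)}(1-\theta_c)}$, and sending $\varepsilon \to 0$ forces the total exponent to be $\le 0$; tracking the arithmetic, this is precisely $\theta_c \le p^*/r$. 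That $p^*/r < 1$ is immediate from $r > p^*$. The main obstacle I anticipate is the bookkeeping: choosing the test family so that \emph{only} the $p^*$ scaling law (and not an accidental extra cancellation from the power weights) controls the comparison, and making sure the chosen $u$ genuinely lies in $W_{\{a,b\}}^{1,(\infty,p)}(\Bbb{R}_*^N)$ — i.e.\ that $|x|^a u \in L^\infty$ and $\nabla u \in L^p(|x|^b dx)$ — which is where the hypothesis $ap - N \ne b - p$ is used to keep the two relevant scales genuinely separated, and where one must be careful near $0$ and near $\infty$ (handled by cutting off with $\zeta$ and $1-\zeta$ as in the proof of Theorem \ref{th2}, or by the Kelvin reduction). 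Once the correct one-parameter family is in hand, the conclusion drops out of letting the parameter tend to its extreme value.
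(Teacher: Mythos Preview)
Your plan has a genuine gap. The dilation argument in your first paragraph cannot extract any constraint: the multiplicative inequality (\ref{13}) is \emph{exactly} invariant under $u\mapsto u(\lambda\,\cdot)$ for every $u$, not just for Aubin--Talenti profiles. Indeed, this invariance is precisely how (\ref{13}) was obtained from the additive inequality in Corollary~\ref{cor3}; by (\ref{7}) both sides scale as $\lambda^{-(c+N)/r}$, so letting $\lambda\to 0$ or $\infty$ yields nothing. The choice of an Aubin--Talenti profile is therefore a red herring---no extremal structure is needed, and pure dilation about the origin is inoperative here.

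Your second paragraph is groping toward the right idea (a concentration parameter $\varepsilon$ that probes the $p^*$ scaling), but it is missing the essential point: you must break the dilation invariance, and the only way to do that with power weights is to localize the test function in a region where $|x|$ is essentially constant, so that the weights drop out. The paper does this in the simplest possible way: fix any $\varphi\in C_0^\infty(\Bbb{R}^N)$, $\varphi\neq 0$, set $u=\varphi(\cdot+x_0)$ with $R:=|x_0|\to\infty$, and plug into (\ref{13}). Since $\limfunc{Supp}\varphi$ is compact, each weighted norm factors as a power of $R$ times an unweighted norm of $\varphi$, giving $R^{c/r}\|\varphi\|_r\le C\,R^{b\theta_c/p+a(1-\theta_c)}\|\nabla\varphi\|_p^{\theta_c}\|\varphi\|_\infty^{1-\theta_c}$ for all large $R$, hence $\frac{c}{r}\le \frac{b\theta_c}{p}+a(1-\theta_c)$. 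Adding $\frac{N}{r}$ and using (\ref{7}) yields $\theta_c\big(\frac{1}{p}-\frac{1}{N}\big)\le\frac{1}{r}$, i.e.\ $\theta_c\le p^*/r$. No Kelvin reduction, no sign assumption on $a$, no extremal profile---just a translated bump. Your concentration idea would also work if you concentrated at a \emph{fixed} $x_0\neq 0$ and sent $\varepsilon\to 0$ (this is the Kelvin-transform twin of the paper's argument), but as written you never specify this, and without it the computation collapses back into the dilation-invariant case.
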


\begin{proof}
First, $\theta _{c}\in [0,1]$ (even $(0,1]$) by Theorem \ref{th2},
irrespective of $p$ and $r.$ Next, let $\varphi \in C_{0}^{\infty }(\Bbb{R}%
^{N}),\varphi \neq 0,$ be chosen once and for all. If $x_{0}\in \Bbb{R}^{N}$
and $R:=|x_{0}|$ is large enough, then $\varphi (\cdot +x_{0})\in
C_{0}^{\infty }(\Bbb{R}_{*}^{N})\subset W_{\{a,b\}}^{1,(\infty ,p)}(\Bbb{R}%
_{*}^{N}) $ irrespective of $a,b$ and $p.$ By using (\ref{13}) with $%
u=\varphi (\cdot +x_{0})$ and by letting $R\rightarrow \infty ,$ we get
(because $\limfunc{Supp}\varphi $ is compact) $R^{\frac{c}{r}}||\varphi
||_{r}\leq CR^{\frac{ b\theta _{c}}{p}+a(1-\theta _{c})}||\nabla \varphi
||_{p}^{\theta _{c}}||\,\varphi ||_{\infty }^{1-\theta _{c}}$ with $C>0$
independent of $R>0 $ large enough. This implies $\frac{c}{r}\leq \frac{
b\theta _{c}}{p}+a(1-\theta _{c}).$ By adding $\frac{N}{r}$ to both sides
and using (\ref{7}), it follows that $\theta _{c}\left( \frac{1}{p}-\frac{1}{
N}\right) \leq \frac{1}{r}.$ This is always true if $p\geq N$ or if $p<N$
and $r\leq p^{*},$ but is equivalent to $\theta _{c}\leq \frac{p^{*}}{r}$($%
<1 $) if $p<N$ and $r>p^{*}.$
\end{proof}

It is a simple matter to check that, together, Theorem \ref{th2} and
Corollaries \ref{cor3} and \ref{cor4} imply that the hypotheses of Theorem 
\ref{th1} are necessary.

\section{Background\label{background}}

In this section, we collect a few preliminary results needed at various
stages of the proof of Theorem \ref{th1}. The material in the first
subsection is mostly taken from \cite[Section 3]{Ra11}. A proof is given
only for Lemma \ref{lm6}, not used in that reference.

\subsection{\protect\smallskip The space $\widetilde{W}_{loc}^{1,1}$\label%
{space}}

If $u\in L_{loc}^{1}(\Bbb{R}_{*}^{N}),$ define the spherical mean of $u$ 
\begin{equation}
f_{u}(t):=(N\omega _{N})^{-1}\int_{\Bbb{S}^{N-1}}u(t\sigma )d\sigma ,
\label{16}
\end{equation}
where $\omega _{N}$ is the volume of the unit ball of $\Bbb{R}^{N}.$ By
Fubini's theorem in spherical coordinates, $f_{u}(t)$ is well defined for
a.e. $t>0$ and $f_{u}\in L_{loc}^{1}(0,\infty ).$ Note that $u$ is radially
symmetric if and only if $u(x)=f_{u}(|x|).$ More generally, $%
u_{S}(x):=f_{u}(|x|)$ is the \emph{radial symmetrization }of $u.$ Set 
\begin{equation*}
\widetilde{W}_{loc}^{1,1}:=\{u\in L_{loc}^{1}(\Bbb{R}_{*}^{N}):\partial
_{\rho }u\in L_{loc}^{1}(\Bbb{R}_{*}^{N})\}.
\end{equation*}
If $u\in \widetilde{W}_{loc}^{1,1},$ then $f_{u}\in W_{loc}^{1,1}(0,\infty )$
and $f_{u}^{\prime }(t):=(N\omega _{N})^{-1}\int_{\Bbb{S}^{N-1}}\partial
_{\rho }u(t\sigma )d\sigma .$ Also, $|u|\in \widetilde{W}_{loc}^{1,1}$ and $%
\partial _{\rho }|u|=(\limfunc{ sgn}u)\partial _{\rho }u$ where $\limfunc{sgn%
}u:=0$ a.e. on $u^{-1}(0).$ Thus, $f_{|u|}\in W_{loc}^{1,1}(0,\infty ),$ so
that $f_{|u|}$ is continuous on $(0,\infty )$ and the subsets 
\begin{equation}
\widetilde{W}_{loc,-}^{1,1}:=\{u\in \widetilde{W}_{loc}^{1,1}:\underline{%
\lim }_{t\rightarrow \infty }f_{|u|}(t)=0\},  \label{17}
\end{equation}
\begin{equation}
\widetilde{W}_{loc,+}^{1,1}:=\{u\in \widetilde{W}_{loc}^{1,1}:\underline{%
\lim }_{t\rightarrow 0^{+}}f_{|u|}(t)=0\},  \label{18}
\end{equation}
are well defined. A few properties needed later are spelled out in the next
two lemmas.

\begin{lemma}
\label{lm5}\emph{(\cite[Lemma 3.5]{Ra11})} If $f\in W_{loc}^{1,1}(0,\infty
),f\geq 0$ and $\underline{\lim }_{t\rightarrow 0^{+}}f(t)=0$ ($\underline{%
\lim }_{t\rightarrow \infty }f(t)=0$), then $f(t)\leq \int_{0}^{t}|f^{\prime
}(\tau )|d\tau $ ($f(t)\leq \int_{t}^{\infty }|f^{\prime }(\tau )|d\tau $)
for every $t>0.$ \newline
\end{lemma}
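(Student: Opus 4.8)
Lemma~\ref{lm5} is a one–variable fact about absolutely continuous functions, and the natural plan is to prove it directly by the fundamental theorem of calculus together with the hypothesis on the lower limit. I would treat the two cases ($t\to 0^+$ and $t\to\infty$) symmetrically; by the change of variable $\tau\mapsto 1/\tau$ one reduces one to the other, so it suffices to argue, say, the case $\underline{\lim}_{t\to 0^+}f(t)=0$.

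First I would note that since $f\in W_{loc}^{1,1}(0,\infty)$, $f$ has an absolutely continuous representative on every compact subinterval of $(0,\infty)$, so for all $0<s<t$ we have the identity $f(t)-f(s)=\int_s^t f'(\tau)\,d\tau$, and hence $f(t)\le f(s)+\int_s^t |f'(\tau)|\,d\tau\le f(s)+\int_0^t |f'(\tau)|\,d\tau$ (using $f\ge 0$ to drop nothing, and monotonicity of the integral of $|f'|$). The hypothesis $\underline{\lim}_{s\to 0^+}f(s)=0$ means there is a sequence $s_n\downarrow 0$ with $f(s_n)\to 0$. Plugging $s=s_n$ into the inequality and letting $n\to\infty$ gives $f(t)\le \int_0^t |f'(\tau)|\,d\tau$, which is exactly the claim; note $\int_0^t|f'|$ may be $+\infty$, in which case the inequality is trivial, so no integrability of $f'$ near $0$ need be assumed.

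For the second case, one can either repeat the argument with $f(t)=f(s)-\int_t^s f'(\tau)\,d\tau$ for $t<s$, giving $f(t)\le f(s)+\int_t^s|f'(\tau)|\,d\tau\le f(s)+\int_t^\infty|f'(\tau)|\,d\tau$, then take $s=s_n\to\infty$ along a sequence realizing the lower limit; or invoke the substitution $g(\tau):=f(1/\tau)$, which lies in $W_{loc}^{1,1}(0,\infty)$, is nonnegative, satisfies $\underline{\lim}_{\tau\to 0^+}g(\tau)=0$, and has $g'(\tau)=-\tau^{-2}f'(1/\tau)$, so that the already-proved case applied to $g$ at the point $1/t$ together with the change of variable $\sigma=1/\tau$ in the resulting integral yields the stated bound for $f$.

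There is essentially no obstacle here: the only points requiring a word of care are (a) justifying the FTC identity on $(s,t)$ from the $W_{loc}^{1,1}$ hypothesis rather than assuming global absolute continuity, and (b) being careful that one only needs a lower limit (a single sequence $s_n$) rather than an actual limit, so that the nonnegativity of $f$ is used to pass from "$f(s_n)\to 0$ along a sequence" to a bound valid for every fixed $t$. Everything else is routine, and the symmetry between the two cases means only one of them needs to be written out in detail.
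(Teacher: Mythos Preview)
Your argument is correct. Note that the paper does not actually give its own proof of Lemma~\ref{lm5}: it simply quotes the statement from \cite[Lemma~3.5]{Ra11} (as announced at the start of Subsection~\ref{space}, a proof is supplied only for Lemma~\ref{lm6}). Your proof via the fundamental theorem of calculus on compact subintervals, followed by passing to a sequence $s_n$ realizing the lower limit, is the standard one and is presumably what appears in \cite{Ra11}. One small comment: your remark (b) about the role of $f\ge 0$ is slightly off---the existence of a sequence $s_n$ with $f(s_n)\to 0$ follows from $\underline{\lim}\,f=0$ alone, with or without nonnegativity; the hypothesis $f\ge 0$ is not actually needed in your argument (it is included in the statement because it is the only case used later).
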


\begin{lemma}
\label{lm6}Let $a,b\in \Bbb{R}$ and $1\leq p<\infty $ be given. If $u\in 
\widetilde{W}_{\{a,b\}}^{1,(\infty ,p)},$ then \newline
(i) $u\in \widetilde{W}_{loc,-}^{1,1}$ if $a>0$ and $u\in \widetilde{W}%
_{loc,+}^{1,1}$ if $a<0.$\newline
(ii) $|u|\in \widetilde{W}_{\{a,b\}}^{1,(\infty ,p)}$ and $%
||\,|x|^{a}|u|\,||_{\infty }=||\,|x|^{a}u\,||_{\infty },||\partial _{\rho
}|u|\,||_{b,p}=||\partial _{\rho }u||_{b,p}.$\newline
(iii) $v:=[(|u|^{p})_{S}]^{\frac{1}{p}}\in \widetilde{W}_{\{a,b\}}^{1,(%
\infty ,p)}$ and $||\,|x|^{a}v||_{\infty }\leq ||\,|x|^{a}u||_{\infty
},||\partial _{\rho }v\,||_{b,p}\leq ||\partial _{\rho }u||_{b,p}.$
\end{lemma}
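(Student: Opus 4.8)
The plan is to prove the three assertions of Lemma~\ref{lm6} in the order (i), (ii), (iii), reducing everything to one‑dimensional facts about the spherical mean via the machinery of \S\ref{space}, together with Lemma~\ref{lm5} and the Kelvin transform of Remark~\ref{rm1}.

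For (i), suppose $a>0$ and $u\in\widetilde W_{\{a,b\}}^{1,(\infty,p)}$. Since $|x|^{a}u\in L^{\infty}(\mathbb R^{N})$ we have the pointwise bound $|u(x)|\le M|x|^{-a}$ a.e., with $M:=\|\,|x|^{a}u\,\|_{\infty}$. Averaging over the sphere of radius $t$ and using $|f_{u}(t)|\le f_{|u|}(t)\le$ (sup of $|u|$ on that sphere) — more carefully, $f_{|u|}(t)=(N\omega_{N})^{-1}\int_{\mathbb S^{N-1}}|u(t\sigma)|\,d\sigma\le Mt^{-a}$ for a.e.\ $t$ — we get that $f_{|u|}$, which is continuous on $(0,\infty)$ because $u\in\widetilde W_{loc}^{1,1}$, satisfies $f_{|u|}(t)\le Mt^{-a}\to0$ as $t\to\infty$. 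Hence $\varliminf_{t\to\infty}f_{|u|}(t)=0$, i.e.\ $u\in\widetilde W_{loc,-}^{1,1}$. The case $a<0$ is symmetric, with the bound $f_{|u|}(t)\le Mt^{-a}\to0$ as $t\to0^{+}$, giving $u\in\widetilde W_{loc,+}^{1,1}$; alternatively one invokes the Kelvin transform, which sends $\widetilde W_{\{a,b\}}^{1,(\infty,p)}$ to $\widetilde W_{\{-a,2p-2N-b\}}^{1,(\infty,p)}$ and interchanges the behavior at $0$ and $\infty$.

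For (ii): that $|u|\in\widetilde W_{loc}^{1,1}$ with $\partial_{\rho}|u|=(\operatorname{sgn}u)\partial_{\rho}u$ is recorded in \S\ref{space}. Consequently $|\partial_{\rho}|u|\,|=|\partial_{\rho}u|$ pointwise a.e.\ (the $\operatorname{sgn}$ factor has modulus $1$ off $u^{-1}(0)$ and $\partial_{\rho}u=0$ a.e.\ on $u^{-1}(0)$), so $\|\partial_{\rho}|u|\,\|_{b,p}=\|\,|x|^{b/p}\partial_{\rho}|u|\,\|_{p}=\|\partial_{\rho}u\|_{b,p}$; in particular $\partial_{\rho}|u|\in L^{p}(\mathbb R^{N};|x|^{b}dx)$. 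Since $|\,|x|^{a}|u|\,|=|\,|x|^{a}u\,|$ pointwise, the $L^{\infty}$ norms agree and $|x|^{a}|u|\in L^{\infty}(\mathbb R^{N})$. Finally $|u|\in L^{1}_{loc}(\mathbb R_{*}^{N})$ trivially, so $|u|\in\widetilde W_{\{a,b\}}^{1,(\infty,p)}$ with the claimed equalities of norms.

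Part (iii) is the one requiring the most care. Set $w:=|u|^{p}$, so by (ii) $w\in\widetilde W_{loc}^{1,1}$ (one should check $|u|^{p}\in\widetilde W_{loc}^{1,1}$ with $\partial_{\rho}w=p|u|^{p-1}\partial_{\rho}|u|$; this is the chain rule for radial Sobolev functions, again from \S\ref{space}-type arguments). Then $(w)_{S}(x)=f_{w}(|x|)$ and $f_{w}\in W_{loc}^{1,1}(0,\infty)$ with $f_{w}'(t)=(N\omega_{N})^{-1}\int_{\mathbb S^{N-1}}\partial_{\rho}w(t\sigma)\,d\sigma$. Note $f_{w}\ge0$, so $v=f_{w}^{1/p}$ is well defined, radial, and in $\widetilde W_{loc}^{1,1}$ on $\{f_{w}>0\}$; one extends across $\{f_{w}=0\}$ as usual. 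The $L^{\infty}$ bound is immediate: $|x|^{ap}v^{p}=|x|^{ap}f_{w}(|x|)=(N\omega_{N})^{-1}\int_{\mathbb S^{N-1}}|x|^{ap}|u(x|x|^{-1}\cdot(\dots))|^{p}$ — more precisely, for a.e.\ $t$, $t^{ap}f_{w}(t)=(N\omega_{N})^{-1}\int_{\mathbb S^{N-1}}t^{ap}|u(t\sigma)|^{p}d\sigma\le\|\,|x|^{a}u\,\|_{\infty}^{p}$, so $\|\,|x|^{a}v\,\|_{\infty}\le\|\,|x|^{a}u\,\|_{\infty}$. For the gradient bound, compute $\partial_{\rho}v=\tfrac1p f_{w}^{1/p-1}f_{w}'$ and estimate, using $|f_{w}'(t)|\le p\,(N\omega_{N})^{-1}\int_{\mathbb S^{N-1}}|u(t\sigma)|^{p-1}|\partial_{\rho}u(t\sigma)|\,d\sigma$ together with Hölder on $\mathbb S^{N-1}$ with exponents $p/(p-1)$ and $p$:
\[
|f_{w}'(t)|\le p\Bigl((N\omega_{N})^{-1}\!\int_{\mathbb S^{N-1}}\!|u(t\sigma)|^{p}d\sigma\Bigr)^{\!\frac{p-1}{p}}\Bigl((N\omega_{N})^{-1}\!\int_{\mathbb S^{N-1}}\!|\partial_{\rho}u(t\sigma)|^{p}d\sigma\Bigr)^{\!\frac1p}=p\,f_{w}(t)^{\frac{p-1}{p}}\,h(t)^{\frac1p},
\]
where $h(t):=(N\omega_{N})^{-1}\int_{\mathbb S^{N-1}}|\partial_{\rho}u(t\sigma)|^{p}d\sigma$. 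Hence $|\partial_{\rho}v(t)|=\tfrac1p f_{w}(t)^{\frac1p-1}|f_{w}'(t)|\le h(t)^{1/p}$, and therefore
\[
\|\partial_{\rho}v\|_{b,p}^{p}=\int_{\mathbb R^{N}}|x|^{b}|\partial_{\rho}v(|x|)|^{p}dx=N\omega_{N}\!\int_{0}^{\infty}\!t^{b+N-1}|\partial_{\rho}v(t)|^{p}dt\le N\omega_{N}\!\int_{0}^{\infty}\!t^{b+N-1}h(t)\,dt=\|\partial_{\rho}u\|_{b,p}^{p},
\]
the last equality by Fubini in spherical coordinates. The main obstacle I expect is the careful justification that $v\in\widetilde W_{loc}^{1,1}$ across the zero set of $f_{w}$ and that the chain‑rule identities $\partial_{\rho}(|u|^{p})=p|u|^{p-1}\partial_{\rho}u$ and $\partial_{\rho}(f_{w}^{1/p})=\tfrac1p f_{w}^{1/p-1}f_{w}'$ hold in the weak sense (the first needs $p\ge1$ and a truncation/approximation argument; the second is a one‑variable composition with a function that is only $C^{1}$ away from $0$, handled by the standard truncation $(\epsilon+f_{w})^{1/p}$ and passage to the limit). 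Once these are in place, the norm inequalities above follow by elementary estimates, and the membership claims follow by combining the $L^{\infty}$ bound, the $L^{p}(|x|^{b}dx)$ bound on $\partial_{\rho}v$, and $v\in L^{1}_{loc}(\mathbb R_{*}^{N})$.
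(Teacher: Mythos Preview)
Your proof is correct and follows essentially the same approach as the paper: parts (i) and (ii) are handled identically (bounding $t^{a}f_{|u|}$ and invoking $\partial_{\rho}|u|=(\operatorname{sgn}u)\partial_{\rho}u$ from \S\ref{space}), and for (iii) the paper gives the same $L^{\infty}$ estimate but defers the gradient bound to \cite[Lemma 5.1]{Ra11}, whose content is precisely the H\"older-on-spheres argument you spell out. Your explicit identification of the chain-rule and zero-set issues for $f_{w}^{1/p}$ is appropriate; these are exactly the technical points absorbed by the reference.
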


\begin{proof}
Obviously, $\widetilde{W}_{\{a,b\}}^{1,(\infty ,p)}\subset \widetilde{W}
_{loc}^{1,1}$ irrespective of $a,b$ and $p.$

(i) By (\ref{16}), $t^{a}f_{|u|}$ is bounded on $(0,\infty ).$ Thus, $%
\lim_{t\rightarrow \infty }f_{|u|}(t)=0$ if $a>0$ and $\lim_{t\rightarrow
0^{+}}f_{|u|}(t)=0$ if $a<0.$

(ii) Since $u\in \widetilde{W}_{loc}^{1,1},$ then $\partial _{\rho }|u|=(%
\limfunc{sgn}u)\partial _{\rho }u$ (as mentioned earlier). With this, the
proof is trivial.

(iii) That $|x|^{a}v\in L^{\infty }(\Bbb{R}^{N})$ and $||\,|x|^{a}v||_{%
\infty }\leq ||\,|x|^{a}u||_{\infty }$ follows from $||\,|x|^{a}v||_{\infty
}^{p}=||\,|x|^{ap}v^{p}||_{\infty }=||\,|x|^{ap}(|u|^{p})_{S}||_{\infty
}\leq ||\,|x|^{ap}|u|^{p}||_{\infty }$ (by (\ref{16}) with $u$ replaced by $%
|u|^{p}$ and $(|u|^{p})_{S}(x):=f_{|u|^{p}}(|x|)$) and from $%
||\,|x|^{ap}|u|^{p}||_{\infty }=||\,|x|^{a}u||_{\infty }^{p}.$

The proof that $\partial _{\rho }v\in L^{p}(\Bbb{R}^{N};|x|^{b}dx)$ with $%
||\partial _{\rho }v\,||_{b,p}\leq ||\partial _{\rho }u||_{b,p}$ is more
delicate, but identical to the proof given in \cite[Lemma 5.1]{Ra11} when,
with the notation of that paper, $r=p\leq q<\infty $ and $u\in \widetilde{W}%
_{\{a,b\}}^{1,(q,p)}.$
\end{proof}

\subsection{A Hardy-type inequality\label{hardy}}

If $\alpha <-1$ and $1\leq p<\infty ,$ the inequality 
\begin{equation}
\left( \int_{0}^{\infty }t^{\alpha }\left( \int_{0}^{t}g(\tau )d\tau \right)
^{p}dt\right) ^{\frac{1}{p}}\leq C\left( \int_{0}^{\infty }t^{\alpha
+p}g(t)^{p}dt\right) ^{\frac{1}{p}},  \label{19}
\end{equation}
holds for some constant $C>0$ and every measurable function $g\geq 0$ on $%
(0,\infty ).$ This is a special case of an inequality of Muckenhoupt \cite
{Mu72} for general (compatible) weights. If $\alpha =-p$ with $p>1,$ Hardy's
inequality is recovered.

\section{The embedding theorem when $r=p$\label{r=p}}

Let $d,b\in \Bbb{R}$ and $1\leq p<\infty .$ In analogy with (\ref{11}), we
define the space 
\begin{equation*}
\widetilde{W}_{\{d,b\}}^{1,(p,p)}:=\{u\in L_{loc}^{1}(\Bbb{R}_{*}^{N}):u\in
L^{p}(\Bbb{R}^{N};|x|^{d}dx),\quad \partial _{\rho }u\in L^{p}(\Bbb{R}
^{N};|x|^{b}dx)\},
\end{equation*}
with norm $||u||_{\{d,b\},(p,p)}:=||u||_{d,p}+||\partial _{\rho }u||_{b,p}.$

The next lemma is a special case of \cite[Theorem 5.2]{Ra11}.

\begin{lemma}
\label{lm7}Let $\,b,c,d\in \Bbb{R}$ and $1\leq p<\infty $ be given. Then, $%
\widetilde{W}_{\{d,b\}}^{1,(p,p)}\hookrightarrow L^{p}(\Bbb{R}%
^{N};|x|^{c}dx) $ (and hence $\widetilde{W}_{\{d,b\}}^{1,(p,p)}%
\hookrightarrow \widetilde{W}_{\{c,b\}}^{1,(p,p)}$) if one of the following
conditions holds: \newline
(i) $d$ and $b-p$ are on the same side of $-N$ (including $-N$), $d\neq b-p$
and $c$ is in the semi-open interval with endpoints $d$ (included) and $b-p$
(not included).\newline
(ii) $d$ and $b-p$ are strictly on opposite sides of $-N$ and $c$ is in the
semi-open interval with endpoints $d$ (included) and $-N$ (not included).
\end{lemma}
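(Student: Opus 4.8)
The plan is to reduce the $N$-dimensional radial statement to a one-dimensional weighted inequality on $(0,\infty)$ and then invoke the Hardy-type inequality \eqref{19}. First I would pass to spherical means: since $u\in\widetilde{W}_{loc}^{1,1}$, Fubini in spherical coordinates gives $f_u\in W_{loc}^{1,1}(0,\infty)$ with $f_u'(t)=(N\omega_N)^{-1}\int_{\Bbb{S}^{N-1}}\partial_\rho u(t\sigma)d\sigma$, and Jensen's inequality over $\Bbb{S}^{N-1}$ yields $|f_u(t)|\le f_{|u|}(t)$ and $|f_u'(t)|\le f_{|\partial_\rho u|}(t)$. Writing the norms of $L^p(\Bbb{R}^N;|x|^e dx)$ in polar form, $\|w\|_{e,p}^p = N\omega_N\int_0^\infty t^{e+N-1}\,(\text{angular }L^p\text{ average of }|w|)\,dt$, the hypotheses become: $t^{(d+N-1)/p}f_u\in L^p(0,\infty)$ and $t^{(b+N-1)/p}f_u'\in L^p(0,\infty)$ (after replacing $u$ by $|u|$, which by the same device only enlarges nothing), and the conclusion to be proved is $t^{(c+N-1)/p}f_u\in L^p(0,\infty)$, with a norm bound controlled multiplicatively. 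Actually the cleanest route is to work directly with $g:=|\partial_\rho u|$ and the pointwise bound $f_{|u|}(t)\le \int_0^t f_g(\tau)d\tau + \liminf\text{-term}$ from Lemma~\ref{lm5}, but only after we know which endpoint behaviour to exploit — and that is dictated by the position of $d$ and $b-p$ relative to $-N$.

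The substance of the argument splits according to the two cases. In case (i), say $d,b-p$ both $>-N$ (the subcase $<-N$ is handled by the Kelvin transform of Remark~\ref{rm1}, which swaps the two and sends $L^p(|x|^e)$ to $L^p(|x|^{-2N-e})$, with $-2N-(b-p)$ and $-2N-d$ again on the same side), the condition $d>-N$ forces (via the $L^p(|x|^d)$ membership and $\partial_\rho u\in L^1_{loc}$) that $u\in\widetilde{W}_{loc,+}^{1,1}$, i.e. $\liminf_{t\to0^+}f_{|u|}(t)=0$; then Lemma~\ref{lm5} gives $f_{|u|}(t)\le\int_0^t|f_{|u|}'(\tau)|d\tau\le\int_0^t f_g(\tau)d\tau$. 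Plugging this into the target integral $\int_0^\infty t^{c+N-1}f_{|u|}(t)^p dt$ and applying \eqref{19} with $\alpha = c+N-1$ — which satisfies $\alpha<-1$ precisely because $c<b-p$, i.e. $c+N<b-p+N$... wait, that only gives $\alpha < b-p+N-1$; the genuine requirement $\alpha<-1$ is $c<-N+... $ no: \eqref{19} needs $\alpha<-1$, and here $\alpha=c+N-1$ so we need $c<2-N$, which is \emph{not} implied. The correct move is instead to use \eqref{19} after the substitution that absorbs the weight: set $g(\tau)=\tau^{(b-p+N-1)/p}h(\tau)$ so that the right side of \eqref{19} becomes the $L^p(|x|^b)$-norm of $\partial_\rho u$; then $\alpha+p = b+N-1$, i.e. $\alpha = b-p+N-1$, and the constraint $\alpha<-1$ is exactly $b-p<-N$ — which is the \emph{other} subcase. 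So in the subcase $d,b-p>-N$ one must instead integrate from $\infty$: but $d>-N$ does not give decay at infinity. I would therefore handle it by splitting $(0,\infty)=(0,1]\cup[1,\infty)$, using the Muckenhoupt inequality on each piece with a cutoff (the function $\zeta$ fixed in the introduction), where on $(0,1]$ the relevant one-sided Hardy inequality has the good sign because $c<b-p$ makes the endpoint contribution at $0$ integrable, and on $[1,\infty)$ the membership $u\in L^p(|x|^d)$ with $c\ge d$... here $c$ lies between $d$ (included) and $b-p$, so $c\ge d$ in this configuration and $t^{c+N-1}\le t^{d+N-1}$ fails for $t\ge1$; one needs $c\le d$ there, which holds in the opposite configuration. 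The honest statement is that the two subcases of (i) are genuinely exchanged by Kelvin transform, so it suffices to treat one, and in the subcase where $b-p<-N<d$... that is case (ii), not (i).

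Let me restate the plan accordingly. For case (ii) ($b-p<-N<d$, up to Kelvin): from $d>-N$ we get $u\in\widetilde{W}_{loc,+}^{1,1}$, so $f_{|u|}(t)\le\int_0^t f_g(\tau)d\tau$ by Lemma~\ref{lm5}; then $\int_0^\infty t^{c+N-1}f_{|u|}(t)^p dt \le \int_0^\infty t^{c+N-1}(\int_0^t f_g)^p dt$, and now \eqref{19} with $\alpha=c+N-1$ applies because $c$ is on the $-N$-side, strictly below $-N$, hence $c+N-1<-1$; its right side is $C\int_0^\infty t^{c+p+N-1}f_g(t)^p dt$, and since $c<b-p$ we have $t^{c+p+N-1}\le C t^{b+N-1}$ only for $t$ bounded — so one more splitting at $t=1$ is needed, combined on $[1,\infty)$ with the elementary bound coming from $u\in L^p(|x|^d)$ and $c\le\max\{d,-N\}$... and this is where the semi-open interval and the position of $c$ are used to make the exponents line up. For case (i) one uses instead that both $d,b-p$ are on one side and the interval is between them; after Kelvin one may assume $d,b-p<-N$, giving decay at $\infty$ ($u\in\widetilde{W}_{loc,-}^{1,1}$ since $b-p<-N$), Lemma~\ref{lm5} in the $t\to\infty$ form, $f_{|u|}(t)\le\int_t^\infty f_g$, and the dual Hardy inequality (the reflection $t\mapsto1/t$ of \eqref{19}) with the exponent condition now reading $c+N-1>-1$, i.e. $c>-N$, valid since $c$ sits on the $-N$-side above $-N$; the endpoint $c=d$ being included and $c=b-p$ excluded is exactly the borderline behaviour of the Hardy constant. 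The main obstacle, as the above vacillation shows, is bookkeeping the four sign configurations and the open/half-open endpoints so that in each the one-sided Hardy or dual-Hardy inequality has a \emph{finite} constant; I expect the cleanest write-up to fix one configuration, derive the others by Kelvin transform (Remark~\ref{rm1}) and, where needed, a single cutoff splitting at $|x|=1$, citing \cite[Theorem 5.2]{Ra11} for the routine verification that the resulting exponent inequalities are exactly the stated ones.
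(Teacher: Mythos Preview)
The paper does not give a proof of Lemma~\ref{lm7}; it simply records the statement as a special case of \cite[Theorem 5.2]{Ra11}. Your attempt at a direct argument via spherical means and Hardy-type inequalities is therefore more than what the paper does, but as written it has two genuine gaps.

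First, the reduction to one variable is not as stated. For non-radial $u$ the quantity $\|u\|_{c,p}^p$ equals $N\omega_N\int_0^\infty t^{c+N-1}f_{|u|^p}(t)\,dt$, which involves the angular $L^p$ mean $f_{|u|^p}$, not $f_{|u|}$ or $f_u$. By Jensen $f_{|u|}^p\le f_{|u|^p}$, so a bound on $f_{|u|}$ yields only a \emph{lower} bound on $\|u\|_{c,p}$, the wrong direction. The correct device (compare the proof of Theorem~\ref{th8}(iii) and \cite[Lemma 5.1]{Ra11}) is to pass to the radial function $v=[(|u|^p)_S]^{1/p}$, which satisfies $\|v\|_{c,p}=\|u\|_{c,p}$ for every $c$ and $\|\partial_\rho v\|_{b,p}\le\|\partial_\rho u\|_{b,p}$; only after this symmetrization can the one-dimensional argument be run.

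Second, your decay claim is in the wrong direction: $u\in L^p(\Bbb{R}^N;|x|^d dx)$ with $d>-N$ forces $\liminf_{t\to\infty}f_{|u|}(t)=0$ (i.e., $u\in\widetilde{W}_{loc,-}^{1,1}$), not $\liminf_{t\to 0^+}$. The cutoff $\zeta$ itself is a counterexample: it lies in $L^p(\Bbb{R}^N;|x|^d dx)$ for every $d>-N$ yet equals $1$ near the origin. This flips every subsequent choice of which endpoint to integrate from in Lemma~\ref{lm5}, and hence which of \eqref{19} or its dual form applies; the back-and-forth in your case analysis is a symptom of this sign error rather than mere bookkeeping. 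Once both points are fixed the Hardy-inequality route can be made to work, but at that stage you are essentially reproducing the argument of \cite[Theorem 5.2]{Ra11} that the paper is content to cite.
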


The conditions (i) and (ii) of the lemma are not necessary: There is a third
option with no relevance to the issue of interest here.

\begin{theorem}
\label{th8}Let $a,b,c\in \Bbb{R}$ and $1\leq p<\infty $ be given. Then, $%
\widetilde{W}_{\{a,b\}}^{1,(\infty ,p)}\hookrightarrow L^{p}(\Bbb{R}
^{N};|x|^{c}dx)$ if and only if $a\neq 0$ and one of the following three
conditions holds: \newline
(i) $ap-N\neq b-p$ are on the same side of $-N$ (including $b-p=-N$) and $c$
is in the open interval with endpoints $ap-N$ and $b-p.$\newline
(ii) $ap-N$ and $b-p$ are strictly on opposite sides of $-N$ and $c$ is in
the open interval with endpoints $ap-N$ and $-N.$\newline
(iii) $a(b-p+N)>0$ and\footnote{%
Note that $b-p=c^{1}$ in (\ref{3}) when $r=p$ and that it is \emph{not}
assumed that $ap-N\neq b-p.$} $c=b-p.$ If so, there is a constant $C>0$ such
that 
\begin{equation}
||u||_{b-p,p}\leq C||\partial _{\rho }u||_{b,p},\qquad \forall u\in 
\widetilde{W}_{\{a,b\}}^{1,(\infty ,p)}.  \label{20}
\end{equation}
\end{theorem}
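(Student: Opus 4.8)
\textbf{Proof plan for Theorem \ref{th8}.}
The necessity is already in hand: parts (i)--(ii) of Theorem \ref{th1} specialize to $r=p$ (where $c^{0}=ap-N$ and $c^{1}=b-p$, so the ``open interval with endpoints $c^{0},c^{1}$'' becomes the one described here), and the boundary case $c=c^{1}=b-p$ is exactly condition (iii) via Theorem \ref{th2}(v) together with Corollary \ref{cor3}. So the real work is sufficiency, and by Remark \ref{rm1} (Kelvin transform, which sends $\{a,b\}\mapsto\{-a,2p-2N-b\}$ and reflects $ap-N$ and $b-p$ across $-N$) it suffices to treat, say, $a>0$; the case $a<0$ follows by transporting the conclusion. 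So assume $a>0$, hence $u\in\widetilde W^{1,1}_{loc,-}$ by Lemma \ref{lm6}(i), and by Lemma \ref{lm6}(ii)--(iii) it is no loss to assume $u\geq 0$ and $u$ radial, i.e.\ $u(x)=f(|x|)$ with $f\geq 0$, $f\in W^{1,p}_{loc}(0,\infty)$, and $\varliminf_{t\to\infty}f(t)=0$.

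\textbf{The key reduction.} With $u$ radial the statement becomes a one-dimensional inequality: $u\in\widetilde W^{1,(\infty,p)}_{\{a,b\}}$ says $t^{a}f(t)\le M:=\||x|^{a}u\|_{\infty}$ for all $t$ and $f'\in L^{p}((0,\infty);t^{b+N-1}dt)$, and the target $\|u\|_{c,p}<\infty$ is $f\in L^{p}((0,\infty);t^{c+N-1}dt)$. Since $\varliminf_{t\to\infty}f=0$, Lemma \ref{lm5} gives the pointwise bound $f(t)\le\int_{t}^{\infty}|f'(\tau)|\,d\tau$. The plan is now to split into the three cases and in each case produce $f\in L^{p}(t^{c+N-1})$ with the right constant.

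\emph{Case (iii), $c=b-p$ (and $a(b-p+N)>0$, i.e.\ $b-p+N>0$ since $a>0$).} Here I would use $f(t)\le\int_t^\infty|f'|$ and apply the Hardy-type inequality \eqref{19} in the reversed form (for $\int_t^\infty$ rather than $\int_0^t$, which is \eqref{19} after $t\mapsto t^{-1}$): with $\alpha=c+N-1=b-p+N-1$, the exponent condition for the reversed inequality is $\alpha+1>0$, i.e.\ $b-p+N>0$, which is exactly $a(b-p+N)>0$. That yields $\|f\|_{c+N-1,p}\le C\|f'\|_{b+N-1,p}$, i.e.\ \eqref{20}. Note this case uses \emph{only} $f'\in L^p$, not the sup bound -- consistent with the purely additive-free inequality \eqref{20}.

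\emph{Cases (i) and (ii), $c$ strictly between $ap-N$ and $b-p$ (resp.\ between $ap-N$ and $-N$).} The strategy is to interpolate between the $L^\infty$ control near one end of $(0,\infty)$ and the $L^p(t^{b+N-1})$ control of $f'$ near the other, i.e.\ to realize $u\in\widetilde W^{1,(p,p)}_{\{d,b\}}$ for a suitable $d$ on the correct side of $-N$ and then invoke Lemma \ref{lm7}. Concretely: the condition $t^a f(t)\le M$ gives $f\in L^p(t^{d+N-1})$ \emph{on bounded sets} whenever $dp/p - ap = d-ap <\dots$ -- more precisely $t^{d+N-1}f^p\le M^p t^{d-ap+N-1}$ is integrable near $0$ iff $d-ap+N>0$, i.e.\ $d>ap-N$; and Lemma \ref{lm5} with $\varliminf_\infty f=0$ handles the tail. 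So if $ap-N<b-p$, pick $d$ with $ap-N<d<\min\{b-p,\,{-N}\}$ if $b-p<-N$ was impossible here; in case (i), $ap-N$ and $b-p$ are on the same side of $-N$, so the whole open interval $(ap-N,b-p)$ (or $(b-p,ap-N)$) lies on one side and I can take $d$ in it with $d$ on the same side of $-N$, then $u\in\widetilde W^{1,(p,p)}_{\{d,b\}}$ (the $L^p(t^{d+N-1})$ membership coming from the sup bound near $0$ -- when $d>ap-N$ -- plus Lemma \ref{lm5} otherwise, and a symmetric argument if instead $d<ap-N$, using the Kelvin/origin-reflection symmetry or directly $f(t)\le\int_0^t|f'|$ which needs $\varliminf_{0^+}f=0$, available when $a<0$, i.e.\ after Kelvin). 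Once $u\in\widetilde W^{1,(p,p)}_{\{d,b\}}$ with $d$ and $b-p$ as in Lemma \ref{lm7}(i) or (ii), that lemma gives $\widetilde W^{1,(p,p)}_{\{d,b\}}\hookrightarrow L^p(|x|^c dx)$ for every $c$ between $d$ (included) and $b-p$ (or $-N$); shrinking, $d$ can be taken arbitrarily close to $ap-N$, so every $c$ strictly between $ap-N$ and $b-p$ (resp.\ $-N$) is reached. This is the step I expect to be fussiest: correctly bookkeeping which of $f(t)\le\int_0^t|f'|$ or $f(t)\le\int_t^\infty|f'|$ is available (it depends on the sign of $a$, hence on whether one is in the ``direct'' or the Kelvin-transformed picture), and checking that the sup bound $t^af\le M$ really does deliver $L^p(t^{d+N-1})$ near the appropriate endpoint. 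Everything else is the arithmetic-geometric/Hölder bookkeeping of constants, which I would not spell out.

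Finally, to get the \emph{multiplicative} refinement \eqref{8} in case (iii) of Theorem \ref{th1} restricted to $r=p$ one does not need it -- there $\theta_c=1$ and \eqref{8} degenerates to \eqref{20}; for cases (i)--(ii) the multiplicative form is supplied by Corollary \ref{cor3}, whose sufficiency half (arithmetic--geometric inequality) applies once the plain embedding is established. So no extra rescaling argument is needed here beyond what Corollary \ref{cor3} already packages.
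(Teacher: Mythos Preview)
Your handling of case~(iii) is correct and essentially the paper's: reduce to radial via Lemma~\ref{lm6}(ii)--(iii) and apply a Hardy-type inequality. (You keep $a>0$ and use the $\int_t^\infty$ form; the paper Kelvin-transforms to $a<0$ and uses (\ref{19}) directly. Equivalent.)

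For cases (i) and (ii), however, there is a real gap. Your plan hinges on showing $u\in\widetilde W^{1,(p,p)}_{\{d,b\}}$ \emph{globally}, i.e.\ $f\in L^p((0,\infty);t^{d+N-1}dt)$, and then invoking Lemma~\ref{lm7}. But the two ingredients you cite do not deliver this. The sup bound $t^af\le M$ controls $\int t^{d+N-1}f^p$ near $0$ only when $d>ap-N$ and near $\infty$ only when $d<ap-N$: never both. Your phrase ``Lemma~\ref{lm5} handles the tail'' produces only the pointwise bound $f(t)\le\int_t^\infty|f'|$; turning that into an $L^p$ estimate needs a Hardy inequality whose exponent condition then forces $d\le b-p$ (near $\infty$) or $d\ge b-p$ (near $0$), and these constraints are incompatible with the sup-bound constraint in key sub-cases --- in particular when $b-p=-N$, and throughout case~(ii) (where $b-p<-N$ for $a>0$, so that even $\int_1^\infty|f'|$ need not be finite and the splitting you would want breaks down).

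The paper's device is different and avoids this entirely: it does \emph{not} reduce to radial for (i)--(ii), but instead fixes a cut-off $\zeta\in C_0^\infty(\Bbb R^N)$ and treats $\zeta u$ and $(1-\zeta)u$ separately. Each piece has restricted support, so the sup bound \emph{alone} already places one piece directly in $L^p(|x|^c dx)$ and the other in $\widetilde W^{1,(p,p)}_{\{d,b\}}$ for a $d$ chosen on the convenient side of $ap-N$; Lemma~\ref{lm7} then finishes. No Hardy inequality enters (i)--(ii) at all --- the cut-off does the localization that your argument is missing. Your radial reduction is legitimate (by Lemma~\ref{lm6}(iii) and the radiality of the weight $|x|^c$), but even after it you would still need a cut-off on $(0,\infty)$ to make the argument go through.
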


\begin{proof}
The necessity follows from Theorem \ref{th2} with $r=p$ (hence $c^{0}=ap-N$
and $c^{1}=b-p$). To prove the sufficiency, we first choose $\zeta \in
C_{0}^{\infty }(\Bbb{R}^{N})$ as in the end of the Introduction. It is
readily checked that the multiplication by $\zeta $ or $1-\zeta $ is
continuous on $\widetilde{W}_{\{a,b\}}^{1,(\infty ,p)}$ (just notice that $%
|x|^{b-ap}$ is integrable on $\limfunc{Supp}\nabla \zeta $ irrespective of $%
a,b$ and $p$). Thus, the problem is reduced to showing that $||\zeta
u||_{c,p}\leq C||\zeta u||_{\{a,b\},(\infty ,p)}$ and $||(1-\zeta
)u||_{c,p}\leq C||(1-\zeta )u||_{\{a,b\},(\infty ,p)}$ where $C>0$ is
independent of $u.$

(i) By Kelvin transform, a proof is needed only when $ap-N>-N$ (i.e., $a>0$)
and $b-p\geq -N.$ Since $ap-N\neq b-p,$ this splits into the two cases $%
-N<ap-N<b-p$ and $-N\leq b-p<ap-N.$

\textit{Case (i-{\scriptsize 1}):}$-N<ap-N<b-p.$

Let $c\in (ap-N,b-p).$ That $||\zeta u||_{c,p}\leq C||\,|x|^{a}\zeta
u||_{\infty }\leq C||\zeta u||_{\{a,b\},(\infty ,p)}$ is simply due to $%
|x|^{c-ap} $ being integrable on $\limfunc{Supp}\zeta \subset B(0,1)$ since $%
c-ap>-N.$

Next, pick $d\in (-N,ap-N).$ Then, $||(1-\zeta )u||_{d,p}\leq
C||\,|x|^{a}(1-\zeta )u||_{\infty }$ because $d-ap<-N,$ so that $|x|^{d-ap}$
is integrable on $\limfunc{Supp}(1-\zeta )\subset \Bbb{R}^{N}\backslash B(0,%
\frac{1}{2}).$ Thus, $(1-\zeta )u\in \widetilde{W}_{\{d,b\}}^{1,(p,p)}$ and $%
||(1-\zeta )u||_{\{d,b\},(p,p)}\leq C||(1-\zeta )u||_{\{a,b\},(\infty ,p)}.$
Since $c\in (d,b-p)$ and $d,b-p>-N,$ part (i) of Lemma \ref{lm7} yields $%
\widetilde{W}_{\{d,b\}}^{1,(p,p)}\hookrightarrow L^{p}(\Bbb{R}
^{N};|x|^{c}dx) $ and so $||(1-\zeta )u||_{c,p}\leq C||(1-\zeta
)u||_{\{a,b\},(\infty ,p)}$ for another constant $C$ by compounding
inequalities.

\textit{Case (i-{\scriptsize 2}):} $-N\leq b-p<ap-N.$

If $c\in (b-p,ap-N),$ what is now obvious is that $||(1-\zeta )u||_{c,p}\leq
C||\,|x|^{a}(1-\zeta )u||_{\infty }\leq C||(1-\zeta )u||_{\{a,b\},(\infty
,p)}.$ To prove $||\zeta u||_{c,p}\leq C||\zeta u||_{\{a,b\},(\infty ,p)},$
choose $d>ap-N$ and argue as in Case (i-{\scriptsize 1}), with minor
modifications. Specifically, $||\zeta u||_{d,p}\leq C||\,|x|^{a}\zeta
u||_{\infty }$ because $|x|^{d-ap}$ is integrable on $\limfunc{Supp}\zeta ,$
so that $\zeta u\in \widetilde{W}_{\{d,b\}}^{1,(p,p)}$with $||\zeta
u||_{\{d,b\},(p,p)}\leq C||\zeta u||_{\{a,b\},(\infty ,p)},$ while $%
\widetilde{W}_{\{d,b\}}^{1,(p,p)}\hookrightarrow L^{p}(\Bbb{R}
^{N};|x|^{c}dx) $ by part (i) of Lemma \ref{lm7} since $c\in
(b-p,ap-N)\subset (b-p,d)$

(ii) By Kelvin transform, it suffices to discuss the case when $b-p<-N<ap-N.$
Let $c\in (-N,ap-N)$ be given. As in Case (i-{\scriptsize 2}) above, it is
plain that $||(1-\zeta )u||_{c,p}\leq C||\,|x|^{a}(1-\zeta )u||_{\infty
}\leq C||(1-\zeta )u||_{\{a,b\},(\infty ,p)}.$ The proof that $||\zeta
u||_{c,p}\leq C||\zeta u||_{\{a,b\},(\infty ,p)}$ proceeds as in Case (i-%
{\scriptsize 2}), by first choosing $d>ap-N$ to get $||\zeta u||_{d,p}\leq
C||\,|x|^{a}\zeta u||_{\infty },$ but next using part (ii) of Lemma \ref{lm7}
since $c\in (-N,ap-N)\subset (-N,d).$

(iii) It suffices to prove (\ref{20}). By Kelvin transform, suppose $%
a<0,c=b-p<-N$ with no loss of generality. By part (ii) of Lemma \ref{lm6},
it is also not restrictive to assume $u\geq 0$ and, by part (iii) of that
lemma, that $u$ is radially symmetric since, when $u$ is changed into $%
[(u^{p})_{S}]^{\frac{1}{p}},$ the left-hand side of (\ref{20}) is unchanged
and its right-hand side is not increased.

Now, if $u\geq 0$ is radially symmetric, then $u(x)=f_{u}(|x|)$ with $%
f_{u}\in W_{loc}^{1,1}(0,\infty ),$\linebreak $f_{u}\geq 0$ and (\ref{20})
becomes 
\begin{equation}
||f_{u}||_{b-p+N-1,p}\leq C||f_{u}^{\prime }||_{b+N-1,p}.  \label{21}
\end{equation}
By part (i) of Lemma \ref{lm6}, $u\in \widetilde{W}_{loc,+}^{1,1}$ since $%
a<0,$ so that $f_{u}(t)\leq \int_{0}^{t}|f_{u}^{\prime }(\tau )|d\tau $ by (%
\ref{18}) and Lemma \ref{lm5}. Thus, (\ref{21}) follows from the Hardy-type
inequality (\ref{19}) with $\alpha =b-p+N-1$ ($<-1$).
\end{proof}

\section{The embedding theorem when $1\leq r<p$\label{1<r<p}}

The embedding theorem when $1\leq r<p$ (Theorem \ref{th10} below) will now
be proved by combining Theorem \ref{th8} with the following special case of 
\cite[Theorem 5.2]{Ra11}.

\begin{lemma}
\label{lm9}Let $b,c,d\in \Bbb{R}$ and $1\leq r<p<\infty $ be given ($1\leq
p<\infty $ and $0<r<\infty $ if $N=1$). Then, $\widetilde{W}%
_{\{d,b\}}^{1,(p,p)}\hookrightarrow L^{r}(\Bbb{R}^{N};|x|^{c}dx)$ if one of
the following two conditions holds:\newline
(i) $d$ and $b-p$ are on the same side of $-N$ (including $-N$), $d\neq b-p,$
$c$ is in the open interval with endpoints $\frac{r(d+N)}{p}-N$ and $\frac{%
r(b-p+N)}{p}-N.$\newline
(ii) $d$ and $b-p$ are strictly on opposite sides of $-N$ and $c$ is in the
open interval with endpoints $\frac{r(d+N)}{p}-N$ and $-N.$\newline
\end{lemma}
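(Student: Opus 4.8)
The first thing I would note is that Lemma~\ref{lm9} is the case $q=p$, $1\le r<p$ of the general embedding theorem \cite[Theorem 5.2]{Ra11} (Lemma~\ref{lm7} being the case $q=r=p$), so that in principle it suffices to match parameters and conditions. Since Lemma~\ref{lm7} is already in hand, though, I would rather give a short direct deduction of Lemma~\ref{lm9} from it, using nothing beyond H\"older's inequality.

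The plan is to fix $u\in\widetilde{W}_{\{d,b\}}^{1,(p,p)}$, split
\begin{equation*}
||u||_{c,r}^{\,r}=\int_{|x|<1}|x|^{c}|u|^{r}\,dx+\int_{|x|\ge 1}|x|^{c}|u|^{r}\,dx,
\end{equation*}
and estimate each piece by H\"older's inequality with the conjugate exponents $p/r$ and $p/(p-r)$ (both $>1$ because $0<r<p$): for any $c'\in\Bbb{R}$ and any measurable $\Omega\subset\Bbb{R}^{N}$,
\begin{equation*}
\int_{\Omega}|x|^{c}|u|^{r}\,dx\le\Bigl(\int_{\Omega}|x|^{c'}|u|^{p}\,dx\Bigr)^{r/p}\Bigl(\int_{\Omega}|x|^{(c-c'r/p)\frac{p}{p-r}}\,dx\Bigr)^{(p-r)/p}.
\end{equation*}
The last factor is finite precisely when $c>\frac{r(c'+N)}{p}-N$ for $\Omega=\{|x|<1\}$ and precisely when $c<\frac{r(c'+N)}{p}-N$ for $\Omega=\{|x|\ge 1\}$; the first factor is bounded by $C||u||_{\{d,b\},(p,p)}^{\,p}$ whenever $c'=d$ (immediately from the definition of $\widetilde{W}_{\{d,b\}}^{1,(p,p)}$) and, more generally, whenever $\widetilde{W}_{\{d,b\}}^{1,(p,p)}\hookrightarrow L^{p}(\Bbb{R}^{N};|x|^{c'}dx)$, i.e.\ whenever $c'$ lies in the semi-open interval supplied by Lemma~\ref{lm7}(i) or (ii). Conceptually, the affine map $c'\mapsto\frac{r(c'+N)}{p}-N$ converts the $L^{p}$-interval of Lemma~\ref{lm7} (localized near $0$ or near $\infty$) into the $L^{r}$-interval claimed here, with $d$ (included) producing the closed endpoint and $b-p$ or $-N$ (excluded) producing the open one.

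Carrying this out, on whichever of the two regions calls for a lower bound on $c$ I would take $c'=d$, obtaining the endpoint $\frac{r(d+N)}{p}-N$; on the other region I would take $c'$ tending to the excluded endpoint $b-p$ (case (i)) or $-N$ (case (ii)) and pass to the limit, which is exactly why that endpoint of the target interval is not attained. Summing the two estimates gives $||u||_{c,r}\le C||u||_{\{d,b\},(p,p)}$, the asserted embedding (when $N=1$ and $r<1$ the same computation is valid verbatim, $||\cdot||_{c,r}$ being then a quasi-norm). The only point requiring care --- and the one I expect to be the main, if minor, obstacle --- is the combinatorial bookkeeping of which endpoint plays the lower and which the upper role: this depends on the sign of $d-(b-p)$ (equivalently on the order of $\frac{r(d+N)}{p}-N$ and $\frac{r(b-p+N)}{p}-N$) in case (i), and in case (ii) on whether $d>-N$ or $d<-N$. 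Once these signs are fixed, each of the resulting sub-cases is a routine verification.
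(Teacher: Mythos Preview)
Your argument is correct and is genuinely different from what the paper does: the paper gives no proof of Lemma~\ref{lm9} at all, simply recording that it is a special case of \cite[Theorem~5.2]{Ra11}. Your route---deducing the $L^{r}$ embedding from the $L^{p}$ embedding of Lemma~\ref{lm7} via H\"older with exponents $p/r$ and $p/(p-r)$, after splitting at $|x|=1$---is self-contained (modulo Lemma~\ref{lm7}, which the paper also imports from \cite{Ra11}) and more informative: it makes transparent why the affine map $c'\mapsto\frac{r(c'+N)}{p}-N$ carries the admissible $L^{p}$-interval onto the admissible $L^{r}$-interval and why both endpoints of the latter are open.

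Two small expository points, neither of which affects validity. First, your prescription ``on the region calling for a lower bound on $c$, take $c'=d$'' is correct only when $d$ is the \emph{smaller} endpoint of the Lemma~\ref{lm7} interval (i.e., $d<b-p$ in case~(i), or $d<-N$ in case~(ii)); when $d$ is the larger endpoint, the roles of $c'=d$ and of $c'$ near the excluded endpoint on the two regions $\{|x|<1\}$ and $\{|x|\ge 1\}$ are swapped---as you yourself anticipate in your closing remark about bookkeeping. Second, there is no actual limit to pass to: since $c$ lies \emph{strictly} inside the target interval, a single fixed $c'$ sufficiently close to the excluded endpoint already makes the corresponding H\"older factor finite.
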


Lemma \ref{lm9} is also true if $r=p,$ when it coincides with Lemma \ref{lm7}%
, but the exposition is clearer by keeping the two statements separate.

\begin{theorem}
\label{th10}Let $a,b,c\in \Bbb{R}$ and $1\leq r<p<\infty $ be given ($1\leq
p<\infty $ and $0<r<\infty $ if $N=1$). Then, $\widetilde{W}%
_{\{a,b\}}^{1,(\infty ,p)}\hookrightarrow L^{r}(\Bbb{R}^{N};|x|^{c}dx)$ if
and only if $a\neq 0$ and one of the following two conditions holds: \newline
(i) $ap-N\neq b-p$ are on the same side of $-N$ (including $b-p=-N$) and $c$
is in the open interval with endpoints $c^{0}$ and $c^{1}.$\newline
(ii) $ap-N$ and $b-p$ are strictly on opposite sides of $-N$ and $c$ is in
the open interval with endpoints $c^{0}$ and $-N.$\newline
\end{theorem}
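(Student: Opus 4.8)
The plan is to derive Theorem~\ref{th10} from Theorem~\ref{th8} (the case $r=p$) combined with Lemma~\ref{lm9}, in close parallel with the localization argument already used to prove Theorem~\ref{th8}. The necessity of the stated conditions is immediate from Theorem~\ref{th2} with the general $r$: by part (i) of that theorem $a\neq0$, by parts (ii) and (iii) $c$ must lie in the \emph{open} interval with endpoints $c^{0}$ and $c^{1}$ (the endpoint $c^{0}$ being excluded by (iii) and, in the opposite-sides case, part (iv) pushing $c$ into the open interval with endpoints $c^{0}$ and $-N$); and part (v) rules out $c=c^{1}$ when $r<p$, so condition (iii) of Theorem~\ref{th8} has no surviving analog here. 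Thus only the sufficiency requires work.

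For sufficiency I would again fix the cutoff $\zeta$ from the Introduction, note (as in the proof of Theorem~\ref{th8}) that multiplication by $\zeta$ and by $1-\zeta$ is continuous on $\widetilde{W}_{\{a,b\}}^{1,(\infty,p)}$, and reduce to estimating $\zeta u$ near the origin and $(1-\zeta)u$ near infinity separately. By the Kelvin transform (Remark~\ref{rm1}) one may assume $a>0$, so one is in the regime where the relevant endpoint behavior near $0$ is governed by $ap-N$ and near $\infty$ by $b-p$. On the piece where the exponent is ``good'' one uses directly that $|x|^{\bullet}$ is locally integrable against the $L^\infty$ bound: e.g. if $c<\bigl(r(ap-N+N)\bigr)/p-N=\,\tfrac{r(ap)}{p}-N=ar-N=c^0$ fails in the right direction near $0$, then $\|\zeta u\|_{c,r}\le C\|\,|x|^a\zeta u\|_\infty$ because $|x|^{c-ar}$ is integrable on $\operatorname{Supp}\zeta$; similarly for $(1-\zeta)u$ at infinity when $c$ is on the $c^{1}$ side. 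On the remaining piece one first upgrades the $L^\infty$ bound to an $L^p$-weighted bound with a suitably chosen auxiliary exponent $d$ (again by local integrability of a power of $|x|$ on the support of $\zeta$ or $1-\zeta$), landing the function in some $\widetilde{W}_{\{d,b\}}^{1,(p,p)}$, and then invokes Lemma~\ref{lm9} to get $\widetilde{W}_{\{d,b\}}^{1,(p,p)}\hookrightarrow L^r(\mathbb{R}^N;|x|^c\,dx)$, compounding the inequalities.

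The case split mirrors Theorem~\ref{th8}: in part (i) one handles $-N\le ap-N<b-p$ and $-N\le b-p<ap-N$ (one of them reducible to the other by Kelvin), and in part (ii) one handles $b-p<-N<ap-N$; in each subcase the free parameter $d$ is chosen slightly beyond the ``bad'' endpoint ($d<ap-N$ or $d>ap-N$ as appropriate, or $d$ near $-N$) so that both (a) the local integrability needed to pass from $L^\infty$ to the $d$-weighted $L^p$ norm holds, and (b) the transformed target exponent $\tfrac{r(d+N)}{p}-N$ lies on the correct side of $c$ relative to $c^1$ (resp. $-N$) to make Lemma~\ref{lm9} applicable, using that $c^0=\tfrac{r(ap-N+N)}{p}-N$ and that $d\mapsto \tfrac{r(d+N)}{p}-N$ is an increasing affine map. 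The main obstacle is precisely this bookkeeping: one must verify that for every $c$ in the open interval with the prescribed endpoints there is a choice of $d$ satisfying both constraints simultaneously — i.e. that the image under $d\mapsto\tfrac{r(d+N)}{p}-N$ of the admissible range of $d$ covers all such $c$ — and that the borderline exponents never accidentally coincide in a way that would force $c$ onto an excluded endpoint. Since the endpoints $c^0,c^1$ (and $-N$) are open here, one has strict inequalities to play with, so a slight perturbation of $d$ always works; making this quantitative for each of the three or four subcases is routine but is where essentially all the actual content of the proof sits.
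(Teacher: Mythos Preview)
Your approach is correct, but it is more laborious than the paper's. The paper does \emph{not} redo the cutoff argument: instead it simply invokes Theorem~\ref{th8} as a finished result to get, for every $d$ in the open interval $J$ with endpoints $ap-N$ and $b-p$ (respectively $ap-N$ and $-N$ in case (ii)), the global embedding $\widetilde{W}_{\{a,b\}}^{1,(\infty,p)}\hookrightarrow L^{p}(\Bbb{R}^{N};|x|^{d}dx)$, hence $\widetilde{W}_{\{a,b\}}^{1,(\infty,p)}\hookrightarrow \widetilde{W}_{\{d,b\}}^{1,(p,p)}$. Then Lemma~\ref{lm9} is applied directly to $u$ itself (not to a localized piece), giving $\widetilde{W}_{\{d,b\}}^{1,(p,p)}\hookrightarrow L^{r}(\Bbb{R}^{N};|x|^{c}dx)$ for every $c$ in the open interval $I_{d}$ with endpoints $\tfrac{r(d+N)}{p}-N$ and $c^{1}$ (respectively $-N$), and one finishes by observing that $\cup_{d\in J}I_{d}$ is exactly the open interval with endpoints $c^{0}$ and $c^{1}$ (respectively $-N$). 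This is a two-line composition of black boxes, with no Kelvin transform, no splitting into $\zeta u$ and $(1-\zeta)u$, and no subcase analysis. Your route reopens the internal structure of Theorem~\ref{th8} and reproves part of it inside the new argument; that works, and the bookkeeping you describe goes through, but it buys nothing over simply chaining Theorem~\ref{th8} with Lemma~\ref{lm9}.
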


\begin{proof}
The necessity follows from Theorem \ref{th2}.

(i) By part (i) of Theorem \ref{th8} with $c$ replaced by $d,$ it follows
that $\widetilde{W}_{\{a,b\}}^{1,(\infty ,p)}\hookrightarrow L^{p}(\Bbb{R}
^{N};|x|^{d}dx)$ for every $d$ in the open interval $J$ with endpoints $ap-N$
and $b-p.$ Of course, this implies $\widetilde{W}_{\{a,b\}}^{1,(\infty
,p)}\hookrightarrow \widetilde{W}_{\{d,b\}}^{1,(p,p)}$ for $d\in J.$ Since $%
J $ is open and its endpoints are on the same side of $-N,$ it is plain that
if $d\in J,$ then $d$ and $b-p$ are on the same side of $-N$ and $d\neq b-p.$
Therefore, by Lemma \ref{lm9}, $\widetilde{W}_{\{d,b\}}^{1,(p,p)}%
\hookrightarrow L^{r}(\Bbb{R}^{N};|x|^{c}dx)$ for every $d\in J$ and every $%
c $ in the open interval $I_{d}$ with endpoints $\frac{r(d+N)}{p}-N$ and $%
\frac{r(b-p+N)}{p}-N.$\newline

Altogether, this yields $\widetilde{W}_{\{a,b\}}^{1,(\infty
,p)}\hookrightarrow L^{r}(\Bbb{R}^{N};|x|^{c}dx)$ for every $c\in \cup
_{d\in J}I_{d}$ and it is obvious that this union is the open interval with
endpoints $ar-N=c^{0}$ and $\frac{r(b-p+N)}{p}-N=c^{1}.$

(ii) Proceed as in (i), but now using parts (ii) of Theorem \ref{th8} and
Lemma \ref{lm9}.
\end{proof}

\begin{remark}
\label{rm3}For the subspace of $\widetilde{W}_{\{a,b\}}^{1,(\infty ,p)}$ of
radially symmetric functions, Theorem \ref{th10} remains true if $0<r<1:$
Just use the theorem with $N=1$ after replacing $b$ and $c$ by $b+N-1$ and $%
c+N-1,$ respectively.
\end{remark}

\section{The embedding theorem when $p<r\leq p^{*}$\label{p<r<p*}}

When $r<p,$ the proof of Theorem \ref{th10} shows that $\widetilde{W}%
_{\{a,b\}}^{1,(\infty ,p)}\hookrightarrow L^{r}(\Bbb{R}^{N};|x|^{c}dx)$ if
and only if $\widetilde{W}_{\{a,b\}}^{1,(\infty ,p)}\hookrightarrow L^{p}(%
\Bbb{R}^{N};|x|^{d}dx)$ for some suitable $d\in \Bbb{R}.$ This feature is no
longer true when $r>p,$ even when $\widetilde{W}_{\{a,b\}}^{1,(\infty ,p)}$
is replaced by the smaller space $W_{\{a,b\}}^{1,(\infty ,p)}(\Bbb{R}%
_{*}^{N}).$ Accordingly, the strategy of proof will be different. We shall
need two other special cases of the embedding theorem in \cite{Ra11}. For
clarity, they are given in two separate statements. Lemma \ref{lm11} is a
rephrasing of parts (i) and (ii) of \cite[Theorem 7.1]{Ra11} when ``$%
p<r=q\leq p^{*}$'' and the inequality in Lemma \ref{lm12} below is proved in 
\cite[Theorem 10.2]{Ra11}.

\begin{lemma}
\label{lm11}Let $b,c,d\in \Bbb{R}$ and $1\leq p<r<\infty ,r\leq p^{*}$ be
given. Then, $W_{\{d,b\}}^{1,(r,p)}(\Bbb{R}_{*}^{N})\hookrightarrow L^{r}(%
\Bbb{R}^{N};|x|^{c}dx)$ if one of the following conditions holds:\newline
(i) $d$ and $b-p$ are on the same side of $-N$ (including $-N$), $\frac{d+N}{%
r}\neq \frac{b-p+N}{p},$ $c$ is in the semi-open interval with endpoints $d$
(included) and $\frac{r(b-p+N)}{p}-N$ (not included).\newline
(ii) $d$ and $b-p$ are strictly on opposite sides of $-N$ and $c$ is in the
semi-open interval with endpoints $d$ (included) and $-N$ (not included).%
\newline
\end{lemma}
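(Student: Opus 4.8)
\emph{Nature of the statement.} As the text indicates, Lemma~\ref{lm11} is the case $q=r$ of parts (i) and (ii) of \cite[Theorem 7.1]{Ra11}, with the requirement $\frac{d+N}{r}\neq\frac{b-p+N}{p}$ being exactly the transcription of $c^{0}\neq c^{1}$; so the proof I would record in the paper is to check that the hypotheses as phrased above are the verbatim restriction to $q=r$ of the hypotheses there, and to cite it. It is worth noting at the outset that the endpoint $c=d$ is trivial: when $c=d$, $L^{r}(\Bbb{R}^{N};|x|^{c}dx)$ is precisely the $L^{r}$-space occurring in the definition of $W_{\{d,b\}}^{1,(r,p)}(\Bbb{R}_{*}^{N})$, so the embedding is the identity on that coordinate. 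The content of the lemma is therefore the case of $c$ strictly between $d$ and the other endpoint.

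\emph{An independent argument.} Were a self-contained proof wanted, I would run the standard dyadic-annulus argument for CKN-type embeddings. First localize with the cutoff $\zeta$ of the Introduction: as in the proof of Theorem~\ref{th8}, $u\mapsto\zeta u$ and $u\mapsto(1-\zeta)u$ are bounded on $W_{\{d,b\}}^{1,(r,p)}(\Bbb{R}_{*}^{N})$ (on $\limfunc{Supp}\nabla\zeta$ the weights are comparable to $1$ and $L^{r}\hookrightarrow L^{p}$ there because $p<r$), so it suffices to estimate $\|\zeta u\|_{c,r}$, which only sees the behaviour near $0$, and $\|(1-\zeta)u\|_{c,r}$, which only sees the behaviour near $\infty$; in case (i) one endpoint governs the origin piece and the other the infinity piece, while in case (ii) the barrier at $-N$ is forced because for $c\leq-N$ the function $\zeta$ itself lies in $W_{\{d,b\}}^{1,(r,p)}(\Bbb{R}_{*}^{N})$ (when $d>-N$) but not in $L^{r}(\Bbb{R}^{N};|x|^{c}dx)$. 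For the origin piece, split $\{0<|x|<1\}$ into the dyadic annuli $A_{j}=\{2^{-j-1}\leq|x|<2^{-j}\}$, $j\geq0$; on each $A_{j}$ rescale to the fixed annulus $A_{0}$, apply the ordinary Sobolev inequality $W^{1,p}(A_{0})\hookrightarrow L^{p^{*}}(A_{0})$ (into every $L^{q}$, $q<\infty$, when $p\geq N$) together with the inclusions $L^{p^{*}}(A_{0})\hookrightarrow L^{r}(A_{0})\hookrightarrow L^{p}(A_{0})$, available precisely because $p<r\leq p^{*}$, and scale back. This produces, on each $A_{j}$, a bound of $\|u\|_{L^{r}(|x|^{c}dx;A_{j})}$ by a sum $2^{j\mu}\|\nabla u\|_{L^{p}(|x|^{b}dx;A_{j})}+2^{j\nu}\|u\|_{L^{r}(|x|^{d}dx;A_{j})}$ with exponents $\mu,\nu$ linear in $b,c,d$; summing over $j$ (a direct geometric estimate suffices once the sign of the dominant scaling exponent is known, using $a_{j}^{r}\leq a_{j}^{p}(\sum_{k}a_{k}^{p})^{(r-p)/p}$ for the gradient term) gives the global inequality, and the mirror computation with $(1-\zeta)u$ and the annuli $\{2^{j}\leq|x|<2^{j+1}\}$ handles infinity.

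\emph{Main obstacle.} The crux is the scaling bookkeeping: one must verify that ``$c$ strictly between $d$ and $\frac{r(b-p+N)}{p}-N$'' (respectively strictly between $d$ and $-N$) is exactly the condition making both geometric series — the one over the annuli shrinking to $0$ and the one over the annuli expanding to $\infty$ — converge, and that the excluded borderline $\frac{d+N}{r}=\frac{b-p+N}{p}$ is precisely where the gradient endpoint $c^{1}$ collapses onto $d$, leaving no interval; the failure at the excluded endpoints is witnessed by the same kind of power-function counterexamples as in Theorem~\ref{th2}. A secondary point, which is why the ambient space here is $W_{\{d,b\}}^{1,(r,p)}(\Bbb{R}_{*}^{N})$ rather than the smaller-norm space $\widetilde{W}$ used when $r\leq p$, is that the per-annulus estimate genuinely invokes the full Sobolev inequality, hence all first-order derivatives and not merely $\partial_{\rho}$. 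Since all of this is carried out in \cite{Ra11}, in the paper I would not reproduce it and would cite that reference.
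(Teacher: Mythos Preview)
Your proposal is correct and matches the paper's treatment exactly: the paper does not prove Lemma~\ref{lm11} independently but simply records it as the case $p<r=q\leq p^{*}$ of parts (i) and (ii) of \cite[Theorem~7.1]{Ra11}, which is precisely what you propose to do. Your supplementary dyadic-annulus sketch is a reasonable outline of how such a result is proved from scratch, but the paper itself does not include or need it.
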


Lemma \ref{lm11} remains true if $r=p,$ when it is a special case of Lemma 
\ref{lm7} since $W_{\{d,b\}}^{1,(p,p)}(\Bbb{R}_{*}^{N})\hookrightarrow 
\widetilde{W}_{\{d,b\}}^{1,(p,p)}.$

\begin{lemma}
\label{lm12}Let $b\in \Bbb{R}$ and $1\leq p<r<\infty ,r\leq p^{*}$ be given.
If $b-p\neq -N,$ then $W_{\{b-p,b\}}^{1,(p,p)}(\Bbb{R}_{*}^{N})%
\hookrightarrow L^{r}(\Bbb{R}^{N};|x|^{c^{1}}dx),$ where (as in (\ref{4})), $%
c^{1}=\frac{r(b-p+N)}{p}-N.$ Furthermore, there is a constant $C>0$ such
that 
\begin{equation*}
||u||_{c^{1},r}\leq C||\nabla u||_{b,p},\qquad \forall u\in
W_{\{b-p,b\}}^{1,(p,p)}(\Bbb{R}_{*}^{N}).
\end{equation*}
\end{lemma}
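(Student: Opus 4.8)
The statement to prove is Lemma~\ref{lm12}: for $1\leq p<r\leq p^{*}$ and $b-p\neq -N$, one has $W_{\{b-p,b\}}^{1,(p,p)}(\Bbb{R}_{*}^{N})\hookrightarrow L^{r}(\Bbb{R}^{N};|x|^{c^{1}}dx)$ with the multiplicative-looking (in fact plain gradient) bound $||u||_{c^{1},r}\leq C||\nabla u||_{b,p}$.

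\textbf{Plan.} Since the lemma is explicitly attributed to \cite[Theorem 10.2]{Ra11}, I would simply invoke that theorem after matching parameters: the source paper characterizes $W_{\{a,b\}}^{1,(q,p)}(\Bbb{R}_{*}^{N})\hookrightarrow L^{r}(\Bbb{R}^{N};|x|^{c}dx)$, and here the relevant instance is $q=p$, $a=b-p$, $c=c^{1}$, with $p<r\leq p^{*}$. The point is that in the \emph{homogeneous} regime $a=b-p$ (equivalently $ap-N=b-p$, after adjusting by the factor $p$), the two endpoints $c^{0}$ and $c^{1}$ of \cite{Ra11} coincide with $c^{1}=\frac{r(b-p+N)}{p}-N$, so the embedding degenerates onto the single admissible value $c=c^{1}$ and is governed purely by the gradient norm — this is exactly the scale-invariant (CKN-type) inequality. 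Thus the verification is: (1) check $a=b-p$ forces $c^{0}=c^{1}$ in the notation of \cite{Ra11}; (2) check the constraint $b-p\neq-N$ is precisely what guarantees the weight is genuinely nontrivial and the rescaling argument is non-degenerate; (3) read off the multiplicative inequality, which for $q=p$ and $c=c^{1}$ reduces to the stated $||u||_{c^{1},r}\leq C||\nabla u||_{b,p}$ because the exponent $\theta$ on the gradient term equals $1$.

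\textbf{Alternatively, a self-contained route.} If one wants to avoid quoting \cite{Ra11}, the natural approach is rescaling plus the classical CKN inequality. For $u\in C_0^\infty(\Bbb{R}_*^N)$ the inequality $||u||_{c^{1},r}\leq C||\nabla u||_{b,p}$ is scale-invariant by construction of $c^{1}$ (replacing $u(x)$ by $u(\lambda x)$ leaves both sides scaled by the same power of $\lambda$), so it is a borderline CKN inequality; one checks the CKN admissibility conditions hold exactly when $p\leq r\leq p^{*}$ and $b-p\neq -N$ (the case $b-p=-N$ being the excluded Hardy-type endpoint where the constant degenerates or the space changes). Then one extends from $C_0^\infty(\Bbb{R}_*^N)$ to all of $W_{\{b-p,b\}}^{1,(p,p)}(\Bbb{R}_*^N)$ — and here, unlike the situation for $W^{1,(\infty,p)}$ flagged in the Introduction, density of $C_0^\infty(\Bbb{R}_*^N)$ does hold for $W_{\{b-p,b\}}^{1,(p,p)}(\Bbb{R}_*^N)$ (this is established in \cite{Ra11}), so a two-step argument is legitimate. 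The radial-symmetrization reductions in Lemma~\ref{lm6}-style arguments could also be used to cut to the one-dimensional weighted Hardy/Bradley inequality when $N=1$.

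\textbf{Main obstacle.} Since the lemma is quoted verbatim from \cite{Ra11}, there is no real obstacle: the only work is bookkeeping to confirm the parameter translation and that $b-p\neq-N$ is the correct non-degeneracy hypothesis. Were one to reprove it from scratch, the delicate point would be the density of $C_0^\infty(\Bbb{R}_*^N)$ in $W_{\{b-p,b\}}^{1,(p,p)}(\Bbb{R}_*^N)$ (not automatic in weighted spaces) and handling the borderline Sobolev exponent $r=p^{*}$, where the best-constant CKN inequality is most subtle; these are precisely the technical cores of \cite{Ra11}, which is why the present paper is content to cite them.
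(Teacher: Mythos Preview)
Your primary plan is correct and is exactly what the paper does: Lemma~\ref{lm12} is not proved here at all, it is simply quoted from \cite[Theorem~10.2]{Ra11}, so invoking that result with the parameter choice $q=p$, $a=b-p$, $c=c^{1}$ is the whole argument. (Your parenthetical ``equivalently $ap-N=b-p$'' conflates the $a$ of the present paper, an $L^{\infty}$ exponent, with the $a$ of \cite{Ra11}, an $L^{q}$ weight; the correct observation is simply that in the notation of Lemma~\ref{lm7}/\ref{lm11} one has $d=b-p$, so the two endpoints collapse.)

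Your alternative self-contained route has a genuine gap: you assert that $C_{0}^{\infty}(\Bbb{R}_{*}^{N})$ is dense in $W_{\{b-p,b\}}^{1,(p,p)}(\Bbb{R}_{*}^{N})$ and attribute this to \cite{Ra11}, but the paper explicitly warns (footnote in the Introduction) that the two-step ``prove for smooth compactly supported functions, then pass to the limit by density'' approach \emph{also fails when $q<\infty$}. The spaces $W_{\{a,b\}}^{1,(q,p)}(\Bbb{R}_{*}^{N})$ are deliberately defined as the full space of functions satisfying the integrability conditions, not as a closure, precisely because density is in general unavailable. The proof in \cite{Ra11} works directly with arbitrary elements of the space (via radial symmetrization and one-dimensional Hardy-type inequalities, in the spirit of Lemma~\ref{lm6} and Subsection~\ref{hardy}), not by extension from test functions.
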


The conditions given in Lemmas \ref{lm11} and \ref{lm12} are not necessary,
but they will suffice for our purposes.

\begin{theorem}
\label{th13}Let $a,b,c\in \Bbb{R}$ and $1\leq p<r<\infty ,r\leq p^{*},$ be
given. Then, $W_{\{a,b\}}^{1,(\infty ,p)}(\Bbb{R}_{*}^{N})\hookrightarrow
L^{r}(\Bbb{R}^{N};|x|^{c}dx)$ if and only if $a\neq 0$ and one of the
following three conditions holds: \newline
(i) $ap-N\neq b-p$ are on the same side of $-N$ (including $b-p=-N$) and $c$
is in the open interval with endpoints $c^{0}$ and $c^{1}.$\newline
(ii) $ap-N$ and $b-p$ are strictly on opposite sides of $-N$ and $c$ is in
the open interval with endpoints $c^{0}$ and $-N.$\newline
(iii) $a(b-p+N)>0$ and $c=c^{1}.$ If so, there is a constant $C>0$ such that 
\begin{equation}
||u||_{c^{1},r}\leq C||\nabla u||_{b,p},\qquad \forall u\in
W_{\{a,b\}}^{1,(\infty ,p)}(\Bbb{R}_{*}^{N}).  \label{22}
\end{equation}
\end{theorem}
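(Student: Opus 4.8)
The necessity is immediate from Theorem~\ref{th2} with the given value of $r$ (so that $c^{0}=ar-N$ and $c^{1}=\frac{r(b-p+N)}{p}-N$): since $r>p$, part (v) of that theorem plays no role, and a routine check shows that parts (i)--(iv) exclude every triple $(a,b,c)$ not already covered by conditions (i), (ii) or (iii) of the present statement. It therefore remains to prove the sufficiency, and the inequality (\ref{22}) in case (iii).

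Case (iii) needs no localization. Since $a(b-p+N)>0$, part (iii) of Theorem~\ref{th8}, applied to $u\in W_{\{a,b\}}^{1,(\infty ,p)}(\Bbb{R}_{*}^{N})\subset \widetilde{W}_{\{a,b\}}^{1,(\infty ,p)}$, gives $||u||_{b-p,p}\leq C||\partial _{\rho }u||_{b,p}\leq C||\nabla u||_{b,p}$; so $u\in W_{\{b-p,b\}}^{1,(p,p)}(\Bbb{R}_{*}^{N})$, with $||u||_{b-p,p}$ and $||\nabla u||_{b,p}$ both bounded by $C||\nabla u||_{b,p}$. As $a(b-p+N)>0$ forces $b-p\neq -N$, Lemma~\ref{lm12} applies and yields (\ref{22}), hence \textit{a fortiori} the embedding into $L^{r}(\Bbb{R}^{N};|x|^{c^{1}}dx)$.

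For cases (i) and (ii) the plan is a localization at $0$ and at $\infty $, in the spirit of the proof of Theorem~\ref{th8}. Since $|x|^{b-ap}$ is integrable on $\limfunc{Supp}\nabla \zeta $, multiplication by $\zeta $ and by $1-\zeta $ is continuous on $W_{\{a,b\}}^{1,(\infty ,p)}(\Bbb{R}_{*}^{N})$, so it is enough to bound $||\zeta u||_{c,r}$ and $||(1-\zeta )u||_{c,r}$ separately. On $\limfunc{Supp}\zeta \subset B(0,1)$, the bound $|x|^{a}u\in L^{\infty }$ gives $|u|\leq C|x|^{-a}$, whence $\zeta u\in L^{r}(\Bbb{R}^{N};|x|^{d}dx)$, with norm controlled by $||\,|x|^{a}u||_{\infty }$, for every $d>c^{0}$ (the condition for $|x|^{d-ar}$ to be integrable near $0$); thus $\zeta u\in W_{\{d,b\}}^{1,(r,p)}(\Bbb{R}_{*}^{N})$, with norm bounded by $C(||\,|x|^{a}u||_{\infty }+||\nabla u||_{b,p})$, for every such $d$. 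Symmetrically, $(1-\zeta )u\in W_{\{d',b\}}^{1,(r,p)}(\Bbb{R}_{*}^{N})$ for every $d'<c^{0}$. One then invokes Lemma~\ref{lm11} on each piece: after a Kelvin transform (Remark~\ref{rm1}) one may assume $a>0$, so that $c^{0}=ar-N>-N$; then, given $c$ in the prescribed open interval (with endpoints $c^{0}$ and $c^{1}$ in case (i), and $c^{0}$ and $-N$ in case (ii)), one selects $d>c^{0}$ and $d'<c^{0}$, both $>-N$, positioned so that $c$ lies in the relevant semi-open interval of Lemma~\ref{lm11} for each piece --- in case (i) one uses part (i) of that lemma, with $d,d'$ on the same side of $-N$ as $b-p$; in case (ii), part (ii), since $d,d'$ are then strictly on the side of $-N$ opposite $b-p$. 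Summing the two estimates gives $||u||_{c,r}\leq ||\zeta u||_{c,r}+||(1-\zeta )u||_{c,r}\leq C(||\,|x|^{a}u||_{\infty }+||\nabla u||_{b,p})$.

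The delicate point is this final choice: one must verify that in each of the sub-cases $c^{0}<c^{1}$ and $c^{0}>c^{1}$ (and, in case (i), also when $b-p=-N$), the exponents $d$ and $d'$ can be placed on the correct side of $-N$ and positioned so that the two semi-open intervals furnished by Lemma~\ref{lm11} jointly cover the whole open interval of admissible $c$. This is exactly where the hypotheses on the relative positions of $ap-N$, $b-p$ and $-N$ enter; the remainder is routine integrability bookkeeping on $\limfunc{Supp}\zeta $ and $\limfunc{Supp}(1-\zeta )$.
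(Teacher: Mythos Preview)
Your argument is correct. For case (iii) you follow exactly the paper's route (Theorem~\ref{th8}~(iii) followed by Lemma~\ref{lm12}), so there is nothing to add there.

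For cases (i) and (ii) your approach is genuinely different. The paper does not localize: instead it first applies Theorem~\ref{th8} (the $r=p$ case) to get $u\in L^{p}(\Bbb{R}^{N};|x|^{c_{d}}dx)$ for $c_{d}$ in the open interval with endpoints $ap-N$ and $b-p$, and then upgrades this to $L^{r}(\Bbb{R}^{N};|x|^{d}dx)$ via the pointwise identity $|x|^{d}|u|^{r}=(|x|^{a(r-p)}|u|^{r-p})(|x|^{c_{d}}|u|^{p})$ and the bound $||\,|x|^{a(r-p)}|u|^{r-p}||_{\infty}=||\,|x|^{a}u||_{\infty}^{r-p}$, where $d=c_{d}+a(r-p)$. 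This gives $W_{\{a,b\}}^{1,(\infty,p)}(\Bbb{R}_{*}^{N})\hookrightarrow W_{\{d,b\}}^{1,(r,p)}(\Bbb{R}_{*}^{N})$ globally, for $d$ ranging over an interval with one endpoint $c^{0}$; Lemma~\ref{lm11} then finishes. Your route bypasses the $r=p$ result entirely for these two cases: the cutoff $\zeta$ lets you read off $\zeta u\in L^{r}(|x|^{d}dx)$ and $(1-\zeta)u\in L^{r}(|x|^{d'}dx)$ directly from the $L^{\infty}$ bound, and you invoke Lemma~\ref{lm11} separately on the two pieces. This is more elementary in that it avoids the dependence on Theorem~\ref{th8} for (i) and (ii), at the price of the Kelvin reduction and the piecewise bookkeeping you flag in the last paragraph. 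Conversely, the paper's H\"older-type upgrade is cleaner and has the side benefit that the intermediate embedding $\widetilde{W}_{\{a,b\}}^{1,(\infty,p)}\hookrightarrow L^{r}(\Bbb{R}^{N};|x|^{d}dx)$ holds for the larger space with only the radial derivative controlled. The verifications you leave pending in the last paragraph do go through routinely: once $a>0$ is arranged, $c^{0}>-N$, and in each sub-case one may take $d\in(c^{0},c]$ or $d>c^{0}$ and $d'\in[c,c^{0})$ or $d'\in(-N,c^{0})$ as appropriate, always on the correct side of $-N$ and distinct from $c^{1}$.
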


\begin{proof}
Once again, the necessity follows from Theorem \ref{th2} and we only address
the sufficiency.

(i) When $d\in \Bbb{R}$ runs over the open interval with endpoints $%
c^{0}=ar-N$ and $b-p+a(r-p)=\frac{p}{r}c^{1}+(1-\frac{p}{r})c^{0},$ the
point $c_{d}:=d-a(r-p)$ runs over the open interval with endpoints $ap-N$
and $b-p.$

Note that since $0<\frac{p}{r}<1,$ the point $d$ above lies in the open
interval with endpoints $c^{0}$ and $c^{1}.$ Since both $ap-N$ and $b-p$ are
on the left (or right) of $-N,$ then both $c^{0}$ and $c^{1}$ are on the
left (or right) of $-N,$ so that $d$ and $b-p$ are always on the same side
of $-N.$

Furthermore, if $d$ is close enough to $c^{0},$ then $\frac{d+N}{r}\neq 
\frac{b-p+N}{p}$ since this holds when $d=c^{0}$ (recall $ap-N\neq b-p$).
This assumption is retained in the subsequent considerations.

By part (i) of Theorem \ref{th8} with $c$ replaced by $c_{d}:=d-a(r-p),$ it
follows that $||u||_{c_{d},p}\leq C||u||_{\{a,b\},(\infty ,p)}$ when $u\in 
\widetilde{W}_{\{a,b\}}^{1,(\infty ,p)}.$ If so, $|x|^{a}u\in L^{\infty }(%
\Bbb{R}^{N})$ and $r>p$ yield $|x|^{a(r-p)}|u|^{r-p}\in L^{\infty }(\Bbb{R}%
^{N})$ with $||\,|x|^{a(r-p)}|u|^{r-p}||_{\infty
}=||\,|x|^{a}|u|\,||_{\infty }^{r-p}.$ As a result, $%
|x|^{d}|u|^{r}=(|x|^{a(r-p)}|u|^{r-p})(|x|^{c_{d}}|u|^{p})\in L^{1}(\Bbb{R}%
^{N})$ and $||u||_{d,r}^{r}=||\,|x|^{d}|u|^{r}||_{1}\leq
||\,|x|^{a}|u|\,||_{\infty
}^{r-p}||\,|x|^{c_{d}}|u|^{p}||_{1}=||\,|x|^{a}|u|\,||_{\infty
}^{r-p}||u||_{c_{d},p}^{p}\leq C^{p}||u||_{\{a,b\},(\infty ,p)}^{r}.$ This
shows that $\widetilde{W}_{\{a,b\}}^{1,(\infty ,p)}\hookrightarrow L^{r}(%
\Bbb{R}^{N};|x|^{d}dx).$ In particular, $W_{\{a,b\}}^{1,(\infty ,p)}(\Bbb{R}%
_{*}^{N})\hookrightarrow L^{r}(\Bbb{R}^{N};|x|^{d}dx)$ and so $%
W_{\{a,b\}}^{1,(\infty ,p)}(\Bbb{R}_{*}^{N})\hookrightarrow
W_{\{d,b\}}^{1,(r,p)}(\Bbb{R}_{*}^{N}).$

As shown earlier, $d$ and $b-p$ are on the same side of $-N$ and $\frac{d+N}{
r}\neq \frac{b-p+N}{p}.$ Therefore, by part (i) of Lemma \ref{lm11}, if $c$
is in the semi-open interval with endpoints $d$ (included) and $\frac{%
r(b-p+N)}{p}-N=c^{1}$(not included), then $W_{\{d,b\}}^{1,(r,p)}(\Bbb{R}%
_{*}^{N})\hookrightarrow L^{r}(\Bbb{R}^{N};|x|^{c}dx)$ and so, from the
above, $W_{\{a,b\}}^{1,(\infty ,p)}(\Bbb{R}_{*}^{N})\hookrightarrow L^{r}(%
\Bbb{R}^{N};|x|^{c}dx).$ Since $d$ can be chosen arbitrarily close to $%
c^{0}, $ this embedding does hold for every $c$ in the open interval with
endpoints $c^{0}$ and $c^{1},$ as claimed.

(ii) Proceed as above, but now using the parts (ii) of Theorem \ref{th8} and
of Lemma \ref{lm11} (note that $ap-N$ and $c^{0}=ar-N$ are always on the
same side of $-N,$ so that $d$ and $b-p$ as well as $c_{d}$ and $b-p$ are on
opposite sides of $-N\;$if $d$ is close to $c^{0}$).

(iii) First, $W_{\{a,b\}}^{1,(\infty ,p)}(\Bbb{R}_{*}^{N})\hookrightarrow 
\widetilde{W}_{\{a,b\}}^{1,(\infty ,p)}\hookrightarrow L^{p}(\Bbb{R}
^{N};|x|^{b-p}dx),$ the latter by part (iii) of Theorem \ref{th8}, so that $%
W_{\{a,b\}}^{1,(\infty ,p)}(\Bbb{R}_{*}^{N})\hookrightarrow
W_{\{b-p,b\}}^{1,(p,p)}(\Bbb{R}_{*}^{N}).$ Next, by Lemma \ref{lm12}, $%
W_{\{b-p,b\}}^{1,(p,p)}(\Bbb{R}_{*}^{N})\hookrightarrow L^{r}(\Bbb{R}
^{N};|x|^{c^{1}}dx)$ and (\ref{22}) holds since $b-p\neq -N$ and $c^{1}=%
\frac{r(b-p+N)}{p}-N.$
\end{proof}

\begin{remark}
Set $a_{r}:=c^{1}=\frac{r(b-p+N)}{p}-N$ to make explicit the $r$-dependence.
By part (iii) of Theorem \ref{th13}, $W_{\{a,b\}}^{1,(\infty ,p)}(\Bbb{R}%
_{*}^{N})\hookrightarrow W_{\{a_{r},b\}}^{1,(r,p)}(\Bbb{R}_{*}^{N})$ if $%
a(b-p+N)>0.$ As pointed out in the Introduction of \cite{Ra11}, the space $%
W_{\{a_{r},b\}}^{1,(r,p)}(\Bbb{R}_{*}^{N})$ is actually independent of $%
p\leq r\leq p^{*},r<\infty .$ Although this will be proved elsewhere (\cite
{Ra11b}), it seems of interest to report that if $N=1$ or $p>N>1$ (hence $%
p^{*}=\infty $), this space also coincides with $W_{\{a,b\}}^{1,(\infty ,p)}(%
\Bbb{R}_{*}^{N})$ when $a=\frac{b-p+N}{p}.$ In other words, if $N=1$ or $%
p>N>1$ and if $b-p+N\neq 0,$ then $W_{\{\frac{b-p+N}{p},b\}}^{1,(\infty ,p)}(%
\Bbb{R}_{*}^{N})=W_{\{a_{r},b\}}^{1,(r,p)}(\Bbb{R}_{*}^{N})$ for every $%
p\leq r<\infty ,$ with equivalent norms.
\end{remark}

\section{The embedding theorem when $p<N$ and $r>p^{*}$\label{r>p*}}

When $p<N$ and $r>p^{*},$ the embedding theorem will be deduced from the
case $r=p^{*}$ in Theorem \ref{th13} after changing $u\in
W_{\{a,b\}}^{1,(\infty ,p)}(\Bbb{R}_{*}^{N})$ into $|u|^{\frac{r}{p^{*}}}.$
The details of the procedure follow.

\begin{lemma}
\label{lm14}Let $a,b\in \Bbb{R}$ and $1\leq p<N,p^{*}<r<\infty ,$ be given.
If $u\in W_{\{a,b\}}^{1,(\infty ,p)}(\Bbb{R}_{*}^{N}),$ then $|u|^{\frac{r}{%
p^{*}}}\in W_{\{a^{*},b^{*}\}}^{1,(\infty ,p)}(\Bbb{R}_{*}^{N})$ where 
\begin{equation}
a^{*}:=\frac{ar}{p^{*}}\text{\quad and\quad }b^{*}:=b+ap\left( \frac{r}{p^{*}%
}-1\right)  \label{23}
\end{equation}
and $||\,|x|^{a^{*}}|u|^{\frac{r}{p^{*}}}||_{\infty }=||\,|x|^{a}u||_{\infty
}^{\frac{r}{p^{*}}},$\quad $||\nabla |u|^{\frac{r}{p^{*}}}||_{b^{*},p}\leq 
\frac{r}{p^{*}}||\nabla u||_{b.p}||\,|x|^{a}u||_{\infty }^{\frac{r}{p^{*}}%
-1}.$
\end{lemma}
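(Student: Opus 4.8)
The plan is to verify directly that $v:=|u|^{r/p^{*}}$ satisfies the two defining conditions of $W_{\{a^{*},b^{*}\}}^{1,(\infty,p)}(\mathbb{R}_{*}^{N})$ with the claimed norm bounds, working pointwise and then integrating. First I would handle the supremum condition, which is the easy half: since $|x|^{a^{*}}|v(x)| = (|x|^{a}|u(x)|)^{r/p^{*}}$ by the very definition of $a^{*}=ar/p^{*}$, the identity $\|\,|x|^{a^{*}}v\|_{\infty} = \|\,|x|^{a}u\|_{\infty}^{r/p^{*}}$ is immediate, and in particular $|x|^{a^{*}}v\in L^{\infty}(\mathbb{R}^{N})$.

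Next I would treat the gradient. The chain rule for the composition $t\mapsto |t|^{r/p^{*}}$ (valid since $r/p^{*}>1$, so the exponent exceeds $1$ and the map is $C^{1}$; one can invoke the standard fact that $|u|\in W_{loc}^{1,1}$ with $\nabla|u| = (\operatorname{sgn}u)\nabla u$ together with $s\mapsto s^{r/p^{*}}$ being Lipschitz on bounded sets of $[0,\infty)$) gives, a.e. on $\mathbb{R}_{*}^{N}$,
\begin{equation*}
\nabla v = \frac{r}{p^{*}}\,|u|^{\frac{r}{p^{*}}-1}\,\operatorname{sgn}(u)\,\nabla u,
\end{equation*}
so that $|\nabla v| \le \frac{r}{p^{*}}\,|u|^{\frac{r}{p^{*}}-1}\,|\nabla u|$ pointwise. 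Then I would split the weight as
\begin{equation*}
|x|^{\frac{b^{*}}{p}}|\nabla v| \le \frac{r}{p^{*}}\,\bigl(|x|^{a}|u|\bigr)^{\frac{r}{p^{*}}-1}\cdot |x|^{\frac{b}{p}}|\nabla u|,
\end{equation*}
which is exactly the point of the definition $b^{*} = b + ap(\frac{r}{p^{*}}-1)$: dividing $b^{*}$ by $p$ produces $\frac{b}{p} + a(\frac{r}{p^{*}}-1)$, and $a(\frac{r}{p^{*}}-1)$ is the exponent of $|x|$ attached to the $L^{\infty}$ factor. Taking $L^{p}(\mathbb{R}^{N})$ norms of both sides and pulling the bounded factor $(|x|^{a}|u|)^{\frac{r}{p^{*}}-1}$ out in $L^{\infty}$ (note $\frac{r}{p^{*}}-1>0$) yields
\begin{equation*}
\|\nabla v\|_{b^{*},p} \le \frac{r}{p^{*}}\,\|\,|x|^{a}u\|_{\infty}^{\frac{r}{p^{*}}-1}\,\|\nabla u\|_{b,p},
\end{equation*}
which is the asserted inequality; in particular $\nabla v\in (L^{p}(\mathbb{R}^{N};|x|^{b^{*}}dx))^{N}$, and since trivially $v\in L_{loc}^{1}(\mathbb{R}_{*}^{N})$, we conclude $v\in W_{\{a^{*},b^{*}\}}^{1,(\infty,p)}(\mathbb{R}_{*}^{N})$.

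The only genuine subtlety—hence the step I would expect to require a word of care—is the justification of the chain rule and the claim $v\in L_{loc}^{1}(\mathbb{R}_{*}^{N})$ with $\nabla v$ given by the formula above, since $|u|^{\frac{r}{p^{*}}-1}$ could in principle blow up where $u\to\infty$; but the $L^{\infty}$ bound $|x|^{a}|u|\le M$ confines this, and on any compact subset of $\mathbb{R}_{*}^{N}$ the weight $|x|^{a}$ is bounded above and below, so $|u|$ itself is bounded there and $v$ is a $C^{1}$ (indeed Lipschitz-on-bounded-sets) function of $u$; the differentiation formula then follows from the standard composition rule for Sobolev functions exactly as in Lemma \ref{lm6}(ii). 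No other obstacle arises, and the constant $\frac{r}{p^{*}}$ in the gradient bound is already built into the chain rule.
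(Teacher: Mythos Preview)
Your proposal is correct and follows essentially the same route as the paper's proof: both reduce the lemma to the chain rule $\nabla(|u|^{r/p^{*}})=\frac{r}{p^{*}}|u|^{r/p^{*}-1}(\operatorname{sgn}u)\nabla u$, justified via local boundedness of $u$ on $\Bbb{R}_{*}^{N}$ (the paper invokes Marcus--Mizel for locally Lipschitz composition with $W_{loc}^{1,1}$ functions, while you argue directly that $|x|^{a}u\in L^{\infty}$ forces $u\in L_{loc}^{\infty}(\Bbb{R}_{*}^{N})$), after which the weight splitting and norm inequalities are routine. The paper simply calls the norm computations ``routine verification,'' whereas you spell out the algebra showing why $b^{*}$ is chosen as it is; otherwise the arguments coincide.
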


\begin{proof}
Since $u\in W_{\{a,b\}}^{1,(\infty ,p)}(\Bbb{R}_{*}^{N})\subset
L_{loc}^{\infty }(\Bbb{R}_{*}^{N}),$ it is clear that $|u|^{\frac{r}{p^{*}}
}\in L_{loc}^{1}(\Bbb{R}_{*}^{N}).$ Thus, everything is a routine
verification if it is shown that $\nabla \left( |u|^{\frac{r}{p^{*}}}\right) 
$ (as a distribution on $\Bbb{R}_{*}^{N}$) is $\frac{r}{p^{*}}|u|^{\frac{r}{%
p^{*}}-1}\nabla u.$ This follows from \cite[Theorem 2.1 and Remark 2.1]
{MaMi72} since $|t|^{\frac{r}{p^{*}}}$ is locally Lipschitz continuous, $%
W_{\{a,b\}}^{1,(\infty ,p)}(\Bbb{R}_{*}^{N})\subset W_{loc}^{1,1}(\Bbb{R}%
_{*}^{N})$ and $\frac{r}{p^{*}}|u|^{\frac{r}{p^{*}}-1}\nabla u\in
L_{loc}^{p}(\Bbb{R}_{*}^{N})\subset L_{loc}^{1}(\Bbb{R}_{*}^{N}).$
\end{proof}

The next Lemma is just a special case of Theorem \ref{th13}$.$

\begin{lemma}
\label{lm15}Let $a,b,c\in \Bbb{R}$ and $1\leq p<N,p^{*}<r<\infty $ be given.
If $a^{*}$ and $b^{*}$ are defined by (\ref{23}), then $W_{\{a^{*},b^{*}%
\}}^{1,(\infty ,p)}(\Bbb{R}_{*}^{N})\hookrightarrow L^{p^{*}}(\Bbb{R}%
^{N};|x|^{c}dx)$ if and only if $a\neq 0$ and one of the following three
conditions holds: \newline
(i) $a^{*}p-N\neq b^{*}-p$ are on the same side of $-N$ (including $%
b^{*}-p=-N$) and $c$ is in the open interval with endpoints $c^{0}=ar-N$ and 
$c^{*1}:=\frac{p^{*}(b^{*}-p+N)}{p}-N=\frac{p^{*}}{r}c^{1}+\left( 1-\frac{%
p^{*}}{r}\right) c^{0}.$\newline
(ii) $a^{*}p-N$ and $b^{*}-p$ are strictly on opposite sides of $-N$ and $c$
is in the open interval with endpoints $c^{0}$ and $-N.$\newline
(iii) $a(b^{*}-p+N)>0$ and $c=c^{*1}.$ Furthermore, there is a constant $C>0$
such that 
\begin{equation}
||u||_{c^{*1},p^{*}}\leq C||\nabla u||_{b^{*}.p}\qquad \forall u\in
W_{\{a^{*},b^{*}\}}^{1,(\infty ,p)}(\Bbb{R}_{*}^{N}).  \label{24}
\end{equation}
\end{lemma}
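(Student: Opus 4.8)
The plan is to obtain Lemma \ref{lm15} directly from Theorem \ref{th13}, applied with the triple $(a,b,r)$ replaced by $(a^{*},b^{*},p^{*})$. First one checks that this substitution is admissible: since $1\leq p<N$, we have $p<p^{*}<\infty $ and $(p)^{*}=p^{*}$, so that $1\leq p<p^{*}<\infty $ and $p^{*}\leq (p)^{*}$, which are exactly the hypotheses on the exponents in Theorem \ref{th13}. Also, by (\ref{23}) we have $a^{*}=ar/p^{*}$ with $r/p^{*}>0$; hence $a^{*}\neq 0$ if and only if $a\neq 0$, and $a^{*}$ has the same sign as $a$. In particular the hypothesis $a^{*}(b^{*}-p+N)>0$ that part (iii) of Theorem \ref{th13} produces after the substitution is equivalent to $a(b^{*}-p+N)>0$, as stated here.

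Next I would identify the two points attached by (\ref{4}) to the triple $(a^{*},b^{*},p^{*})$. The first is $a^{*}p^{*}-N=ar-N=c^{0}$, i.e. it coincides with the point $c^{0}$ of the original triple $(a,b,r)$. The second is $\frac{p^{*}(b^{*}-p+N)}{p}-N$, which is by definition $c^{*1}$; a short computation from (\ref{23}) gives $b^{*}-p+N=(b-p+N)+ap\left( \frac{r}{p^{*}}-1\right) $, hence $\frac{p^{*}}{p}(b^{*}-p+N)=\frac{p^{*}(b-p+N)}{p}+a(r-p^{*})$, and combining this with (\ref{4}) yields the identity $c^{*1}=\frac{p^{*}}{r}c^{1}+\left( 1-\frac{p^{*}}{r}\right) c^{0}$ recorded in the statement.

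With these identifications, the conditions (i), (ii), (iii) furnished by Theorem \ref{th13} for the triple $(a^{*},b^{*},p^{*})$ become word for word the conditions (i), (ii), (iii) above, the relevant open intervals being those with endpoints $c^{0}$, $c^{*1}$ and $c^{0}$, $-N$, respectively; likewise the inequality (\ref{22}) of Theorem \ref{th13}, written for $(a^{*},b^{*},p^{*})$, is precisely (\ref{24}). The argument involves no real obstacle; the only step requiring attention is the bookkeeping of which parameter plays which role, together with the elementary check that the two displayed expressions for $c^{*1}$ agree.
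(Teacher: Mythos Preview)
Your proposal is correct and follows exactly the approach of the paper: the paper's own proof consists of the single observation that $c^{0}=ar-N=a^{*}p^{*}-N$ and the instruction to apply Theorem \ref{th13} with $a,b,r$ replaced by $a^{*},b^{*},p^{*}$. You have simply made the bookkeeping explicit (admissibility of the exponents, the equivalence $a\neq 0\Leftrightarrow a^{*}\neq 0$, the identification of the two endpoints, and the verification of the alternate formula for $c^{*1}$), all of which is routine and accurate.
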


\begin{proof}
Observe that $c^{0}:=ar-N=a^{*}p^{*}-N$ and use Theorem \ref{th13} with $%
a,b,r$ replaced by $a^{*},b^{*},p^{*},$ respectively.
\end{proof}

Recall the definition of $\theta _{c}$ in (\ref{4}) and (\ref{5}) when $c$
is in the closed interval with endpoints $c^{0}$ and $c^{1}\neq c^{0}.$

\begin{theorem}
\label{th16}Let $a,b,c\in \Bbb{R}$ and $1\leq p<N,p^{*}<r<\infty $ be given.
Then, $W_{\{a,b\}}^{1,(\infty ,p)}(\Bbb{R}_{*}^{N})\hookrightarrow L^{r}(%
\Bbb{R}^{N};|x|^{c}dx)$ if and only if $a\neq 0$ and one of the following
three conditions holds: \newline
(i) $ap-N\neq b-p$ are on the same side of $-N$ (including $b-p=-N$), $c$ is
in the open interval with endpoints $c^{0}$ and $c^{1}$ and $\theta _{c}\leq 
\frac{p^{*}}{r}$ (i.e., $c$ is in the semi-open interval with endpoints $%
c^{0}$ (not included) and $c^{*1}=\frac{p^{*}}{r}c^{1}+\left( 1-\frac{p^{*}}{%
r}\right) c^{0}$ (included)).\newline
(ii) $ap-N$ and $b-p$ are strictly on opposite sides of $-N,c$ is in the
open interval with endpoints $c^{0}$ and $-N$ and $\theta _{c}\leq \frac{%
p^{*}}{r}$ (i.e., $\theta _{-N}\in (0,1)$ is defined and $c$ is in the open
interval with endpoints $c^{0}$ and $-N$ if $\theta _{-N}\leq \frac{p^{*}}{r}
$ or in the semi-open interval with endpoints $c^{0}$ (not included) and $%
c^{*1}$ (included) if $\theta _{-N}>\frac{p_{*}}{r}$). \newline
(iii) $ap-N=b-p$ and $c=c^{1}$ ($=c^{0}$). If so, there is a constant $C>0$
such that 
\begin{equation}
||u||_{c,r}\leq C||\nabla u||_{b.p}^{\frac{p^{*}}{r}}||\,|x|^{a}u||_{\infty
}^{1-\frac{p^{*}}{r}},\qquad \forall u\in W_{\{a,b\}}^{1,(\infty ,p)}(\Bbb{R}%
_{*}^{N}).  \label{25}
\end{equation}
\end{theorem}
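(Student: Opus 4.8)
The plan is to reduce the case $r>p^{*}$ to the case $r=p^{*}$ via the nonlinear substitution $u\mapsto |u|^{r/p^{*}}$, exactly as the author announces in the section opening and as Lemmas~\ref{lm14} and \ref{lm15} are set up to enable. Given $u\in W_{\{a,b\}}^{1,(\infty ,p)}(\Bbb{R}_{*}^{N})$, set $w:=|u|^{r/p^{*}}$, $a^{*}:=ar/p^{*}$ and $b^{*}:=b+ap(r/p^{*}-1)$ as in (\ref{23}). Lemma~\ref{lm14} gives $w\in W_{\{a^{*},b^{*}\}}^{1,(\infty ,p)}(\Bbb{R}_{*}^{N})$ together with the two identities $||\,|x|^{a^{*}}w||_{\infty}=||\,|x|^{a}u||_{\infty}^{r/p^{*}}$ and $||\nabla w||_{b^{*},p}\leq \frac{r}{p^{*}}||\nabla u||_{b,p}\,||\,|x|^{a}u||_{\infty}^{r/p^{*}-1}$. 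Note also that $||w||_{c,p^{*}}^{p^{*}}=||\,|x|^{c/p^{*}}w||_{p^{*}}^{p^{*}}=\int |x|^{c}|u|^{r}=||u||_{c,r}^{r}$, so any $L^{p^{*}}(\Bbb{R}^{N};|x|^{c}dx)$-bound on $w$ translates directly into an $L^{r}(\Bbb{R}^{N};|x|^{c}dx)$-bound on $u$.

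The key computation is to check that the hypotheses of Theorem~\ref{th16} on $(a,b,c,p,r)$ are equivalent to the hypotheses of Lemma~\ref{lm15} (equivalently, of Theorem~\ref{th13} with $r=p^{*}$) on $(a^{*},b^{*},c,p)$. First I would record that $a^{*}p^{*}-N=ar-N=c^{0}$, that $a\neq0$ iff $a^{*}\neq0$, and that $a^{*}p-N$ equals $c^{0}$ scaled; more importantly one computes $b^{*}-p+N=\frac{p}{p^{*}}\bigl(\frac{p^{*}}{r}(b-p+N)+(1-\frac{p^{*}}{r})\cdot\frac{p(ap+p-N)\,?}{}\bigr)$ — that is, $b^{*}-p$ and the relevant comparison with $-N$ must be shown to match the ``same side of $-N$'' and ``opposite sides of $-N$'' dichotomies governing (i), (ii), (iii). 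Concretely, the sign conditions $a(b^{*}-p+N)>0$, resp. $a^{*}p-N$ and $b^{*}-p$ being on the same/opposite side of $-N$, should unwind (using $p^{*}=Np/(N-p)$ and the formulas (\ref{23})) into the stated conditions on $a$, on which side of $-N$ the numbers $ap-N$ and $b-p$ lie, and where $c$ sits relative to $c^{0}$, $c^{1}$ and $c^{*1}=\frac{p^{*}}{r}c^{1}+(1-\frac{p^{*}}{r})c^{0}$. The restriction $\theta_{c}\le p^{*}/r$ in (i) and (ii) is precisely the statement that $c$ lies between $c^{0}$ and $c^{*1}$ rather than merely between $c^{0}$ and $c^{1}$, which is exactly the range for which Lemma~\ref{lm15} applies; and case (iii) here ($ap-N=b-p$) forces $a^{*}p^{*}-N=c^{0}=c^{1}$ and, one checks, $a^{*}p-N$ and $b^{*}-p$ strictly on the same side of $-N$ (the sign being that of $a(b-p+N)\ne0$), which is case (iii) of Lemma~\ref{lm15} with $c^{*1}=c^{1}=c^{0}$.

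With the dictionary in place, the sufficiency direction is immediate: assuming $a\neq0$ and one of (i)--(iii), Lemma~\ref{lm15} yields $W_{\{a^{*},b^{*}\}}^{1,(\infty ,p)}(\Bbb{R}_{*}^{N})\hookrightarrow L^{p^{*}}(\Bbb{R}^{N};|x|^{c}dx)$, hence $||w||_{c,p^{*}}\le C\,||w||_{\{a^{*},b^{*}\},(\infty ,p)}$; combining with Lemma~\ref{lm14} and $||w||_{c,p^{*}}^{p^{*}/r}=||u||_{c,r}$ gives $||u||_{c,r}\le C\bigl(||\,|x|^{a}u||_{\infty}+||\nabla u||_{b,p}||\,|x|^{a}u||_{\infty}^{r/p^{*}-1}\bigr)^{p^{*}/r}$, which after a rescaling $u\mapsto u(\lambda\cdot)$ (as in the proof of Corollary~\ref{cor3}) collapses to the multiplicative inequality; in case (iii), where $c=c^{1}=c^{0}$ so $\theta_{c}=1$, one uses the sharper bound (\ref{24}) of Lemma~\ref{lm15}, namely $||w||_{c^{*1},p^{*}}\le C||\nabla w||_{b^{*},p}$, to get $||u||_{c,r}\le C\bigl(||\nabla u||_{b,p}||\,|x|^{a}u||_{\infty}^{r/p^{*}-1}\bigr)^{p^{*}/r}=C||\nabla u||_{b,p}^{p^{*}/r}||\,|x|^{a}u||_{\infty}^{1-p^{*}/r}$, which is exactly (\ref{25}). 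The necessity direction is already supplied by Theorem~\ref{th2} together with Corollaries~\ref{cor3} and \ref{cor4}: Theorem~\ref{th2} pins $c$ to the interval between $c^{0}$ and $c^{1}$ (and between $c^{0}$ and $-N$ in the opposite-sides case), rules out $a=0$ and $c=c^{0}$, while Corollary~\ref{cor4} supplies the extra constraint $\theta_{c}\le p^{*}/r$ when $ap-N\ne b-p$; when $ap-N=b-p$ one has $c^{0}=c^{1}$ and Theorem~\ref{th2}(ii) forces $c=c^{1}$. The main obstacle I anticipate is purely bookkeeping: verifying that the three alphabetically-matched conditions genuinely correspond under (\ref{23}) — in particular that the ``$\theta_{c}\le p^{*}/r$'' clause translates into membership of $c$ in the correct sub-interval of Lemma~\ref{lm15}, and that the parenthetical reformulations of (i) and (ii) in terms of $\theta_{-N}$ and $c^{*1}$ are consistent. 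A minor additional point to watch is that Lemma~\ref{lm14}'s gradient estimate is stated for $W_{\{a,b\}}^{1,(\infty ,p)}(\Bbb{R}_{*}^{N})$ and not for the larger $\widetilde{W}$, which is why Theorem~\ref{th16} is (correctly) only asserted for the smaller space, matching the remark after (\ref{12}).
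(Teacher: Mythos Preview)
Your approach is the same as the paper's: reduce to $r=p^{*}$ via $u\mapsto |u|^{r/p^{*}}$ using Lemma~\ref{lm14}, then invoke Lemma~\ref{lm15}. The necessity argument and the derivation of (\ref{25}) in case (iii) are also as in the paper.

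One caution: your expectation that the ``three alphabetically-matched conditions genuinely correspond'' under (\ref{23}) is too optimistic in case (ii). When $ap-N$ and $b-p$ are strictly on opposite sides of $-N$, the transformed pair $a^{*}p-N$ and $b^{*}-p$ need \emph{not} be; the paper explicitly notes this and splits (ii) into three sub-cases according to whether $\theta_{-N}$ is greater than, equal to, or less than $p^{*}/r$, invoking part (i) of Lemma~\ref{lm15} (with $b^{*}-p\neq -N$), part (i) (with $b^{*}-p=-N$, so $c=c^{*1}=-N$ is excluded), or part (ii), respectively. This is more than bookkeeping---it is precisely why the parenthetical reformulation of (ii) in the statement bifurcates on $\theta_{-N}\lessgtr p^{*}/r$. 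Two minor slips: your displayed expression for $b^{*}-p+N$ trails off with a ``?'' (the clean formula is $b^{*}-p+N=(b-p+N)+ap(r/p^{*}-1)$); and in case (iii) you should not write ``$\theta_{c}=1$'', since $c^{0}=c^{1}$ makes $\theta_{c}$ undefined---the exponent $p^{*}/r$ in (\ref{25}) comes directly from combining (\ref{24}) with the gradient estimate of Lemma~\ref{lm14}, as you in fact do.
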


\begin{proof}
The necessity follows from Theorem \ref{th2} and Corollary \ref{cor4} and
the sufficiency is proved below.

(i) By (\ref{23}), the hypothesis $ap-N\neq b-p$ shows that $a^{*}p-N\neq
b^{*}-p.$ Furthermore, since $ap-N$ and $b-p$ are both on the left (or
right) $-N,$ it is readily checked that the same thing is true of $a^{*}p-N$
and $b^{*}-p.$ Another important point is that $b^{*}-p\neq -N.$ Otherwise, $%
b-p+N=-ap\left( \frac{r}{p^{*}}-1\right) .$ Since $r>p^{*},$ this implies $%
b-p+N<0$ if $a>0$ and $b-p+N>0$ if $a<0$ so that, in either case, $ap-N$ and 
$b-p$ are strictly on opposite sides of $-N,$ in contradiction with the
standing hypotheses.

By part (i) of Lemma \ref{lm15}, if $c$ is in the open interval with
endpoints $c^{0}$ and $c^{*1},$ there is a constant $C>0$ such that $%
||v||_{c,p^{*}}\leq C(||\,|x|^{a^{*}}v||_{\infty }+||\nabla v||_{b^{*},p})$
for every $v\in W_{\{a^{*},b^{*}\}}^{1,(\infty ,p)}(\Bbb{R}_{*}^{N}).$ By
Lemma \ref{lm14}, this holds with $v=|u|^{\frac{r}{p^{*}}}$ and $u\in
W_{\{a,b\}}^{1,(\infty ,p)}(\Bbb{R}_{*}^{N}).$ Since (also by Lemma \ref
{lm14}) 
\begin{equation}
||\,|x|^{a^{*}}|u|^{\frac{r}{p^{*}}}||_{\infty }=||\,|x|^{a}u||_{\infty }^{%
\frac{r}{p^{*}}}\text{ and }||\nabla |u|^{\frac{r}{p^{*}}}||_{b^{*},p}\leq 
\frac{r}{p^{*}}||\nabla u||_{b.p}||\,|x|^{a}u||_{\infty }^{\frac{r}{p^{*}}
-1},  \label{26}
\end{equation}
it follows that $||u||_{c,r}\leq C||\,|x|^{a}u||_{\infty }^{1-\frac{p^{*}}{r}%
}(||\,|x|^{a}u||_{\infty }+||\nabla u||_{b.p})^{\frac{p^{*}}{r}}\leq
C(||\,|x|^{a}u||_{\infty }+||\nabla u||_{b.p}),$ whence $W_{\{a,b\}}^{1,(%
\infty ,p)}(\Bbb{R}_{*}^{N})\hookrightarrow L^{r}(\Bbb{R}^{N};|x|^{c}dx).$

Since $b_{*}-p\neq -N,$ the same argument, but with part (iii) of Lemma \ref
{lm15} instead of part (i), shows that $W_{\{a,b\}}^{1,(\infty ,p)}(\Bbb{R}%
_{*}^{N})\hookrightarrow L^{r}(\Bbb{R}^{N};|x|^{c}dx)$ also when $c=c^{*1}.$

(ii) The main difference with (i) is that $ap-N$ and $b-p$ being strictly on
opposite sides of $-N$ does not imply the same thing for $a^{*}p-N$ and $%
b^{*}-p.$ The argument is split into three cases, depending upon the
relative values of $\theta _{-N}$ and $\frac{p^{*}}{r}.$ Observe that since $%
ap-N$ and $b-p$ are strictly on opposite sides of $-N,$ the same thing is
true of $c^{0}$ and $c^{1},$ so that $-N$ is in the open interval with
endpoints $c^{0}$ and $c^{1}$ and $\theta _{-N}\in (0,1)$ is defined.

\textit{Case (ii-{\scriptsize 1}):} $\theta _{-N}>\frac{p^{*}}{r}.$

Since $c^{*1}=\frac{p^{*}}{r}c^{1}+\left( 1-\frac{p^{*}}{r}\right) c^{0}$
with $c^{0}\neq c^{1}$ amounts to $\theta _{c^{*1}}=\frac{p^{*}}{r},$ it
follows that $\theta _{-N}>\theta _{c^{*1}}.$ In particular, $c^{*1}\neq -N$
and so $b^{*}-p\neq -N$ since $c^{*1}=\frac{p^{*}(b^{*}-p+N)}{p}-N,$ a
formula which also shows that $c^{*1}$ and $b^{*}-p$ are on the same side of 
$-N.$ On the other hand, $\theta _{-N}>\theta _{c^{*1}}$ also means that $%
c^{*1}$ and $c^{0}=ar-N$ are on the same side of $-N.$ Since $a\neq 0$ and $%
a^{*}\neq 0$ have the same sign, $ar-N$ and $a^{*}p-N$ are also on the same
side of $-N.$ As a result, $a^{*}p-N$ and $b^{*}-p\neq -N$ are on the same
side of $-N.$ Therefore, the proof of (i) can be repeated verbatim, to the
effect that $W_{\{a,b\}}^{1,(\infty ,p)}(\Bbb{R}_{*}^{N})\hookrightarrow
L^{r}(\Bbb{R}^{N};|x|^{c}dx)$ for every $c$ in the semi-open interval with
endpoints $c^{0}$ (not included) and $c^{*1}$ (included).

\textit{Case (ii-{\scriptsize 2}):} $\theta _{-N}=\frac{p^{*}}{r}.$

Then, $-N=c^{*1}$ and so $b^{*}-p=-N.$ Evidently, this ensures that $%
a^{*}p-N $ and $b^{*}-p$ are still on the same side of $-N$ (though of
course not strictly). The proof of (i) still shows that $W_{\{a,b\}}^{1,(%
\infty ,p)}(\Bbb{R}_{*}^{N})\hookrightarrow L^{r}(\Bbb{R}^{N};|x|^{c}dx)$
for $c$ in the open interval with endpoints $c^{0}$ and $c^{*1}=-N$ (but not
when $c=-N$).

\textit{Case (ii-{\scriptsize 3}): }$\theta _{-N}<\frac{p_{*}}{r}.$

By arguing as in Case (ii-{\scriptsize 1}), $a^{*}p-N$ and $b^{*}-p$ are now
strictly on opposite sides of $-N.$ By combining part (ii) of Lemma \ref
{lm15} and Lemma \ref{lm14} in the same way as in the proof of (i), it
follows that $W_{\{a,b\}}^{1,(\infty ,p)}(\Bbb{R}_{*}^{N})\hookrightarrow
L^{r}(\Bbb{R}^{N};|x|^{c}dx)$ for every $c$ in the open interval with
endpoints $c^{0}$ and $-N.$

(iii) If $ap-N=b-p,$ then $c^{0}=c^{1}=c^{*1},$ the latter since $c^{*1}=%
\frac{p^{*}}{r}c^{1}+\left( 1-\frac{p^{*}}{r}\right) c^{0}$ and $%
c^{0}=c^{1}. $ If $a>0$ ($a<0$), then $b-p>-N$ and $b^{*}-p>-N$ ($b-p<-N$
and $b^{*}-p<-N$). Thus, $a(b^{*}-p+N)>0,$ so that (\ref{25}) follows from
Lemma \ref{lm14}, from (\ref{24}) with $u$ replaced by $|u|^{\frac{r}{p^{*}}%
} $ and from (\ref{26}).
\end{proof}

\section{An example\label{example}}

As an application of Theorem \ref{th1}, we characterize all the real numbers 
$a,b$ and $1\leq p,r<\infty $ such that $W_{\{a,b\}}^{1,(\infty ,p)}(\Bbb{R}%
_{*}^{N})\hookrightarrow L^{r}(\Bbb{R}^{N}),$ i.e., such that $c=0$ is
admissible in Theorem \ref{th1}. We merely give the result and leave the
routine (though somewhat tedious) verification to the reader. This
verification is easier by using Theorems \ref{th8}, \ref{th10}, \ref{th13}
and \ref{th16} rather than Theorem \ref{th1} (equivalent to all four
theorems together, but not phrased in terms of the relative values of $r,p$
and $p^{*}$). It is also helpful to notice that $ap-N$ and $b-p$ are on the
same side of $-N$ (including $-N$) if and only if $a(b-p+N)\geq 0$ and
strictly on opposite sides of $-N$ if and only if $a(b-p+N)<0.$

\begin{theorem}
\label{th17}Let $a,b\in \Bbb{R}$ and $1\leq p<\infty $ be given. Then, $%
W_{\{a,b\}}^{1,(\infty ,p)}(\Bbb{R}_{*}^{N})\hookrightarrow L^{r}(\Bbb{R}%
^{N})$ if and only if $a>0$ and one of the following conditions
holds:\smallskip \newline
(i) $1\leq r<p$ and\newline
(i-{\scriptsize 1}) $a<\frac{N}{r}$ and $b>Np\left( \frac{1}{N}+\frac{1}{r}-%
\frac{1}{p}\right) $ or\newline
(i-{\scriptsize 2}) $a>\frac{N}{r}$ and $b<Np\left( \frac{1}{N}+\frac{1}{r}-%
\frac{1}{p}\right) .\smallskip $\newline
(ii) $p\leq r\leq p^{*}$ and\newline
(ii-{\scriptsize 1}) $a<\frac{N}{r}$ and $b\geq Np\left( \frac{1}{N}+\frac{1%
}{r}-\frac{1}{p}\right) $ or\newline
(ii-{\scriptsize 2}) $a>\frac{N}{r}$ and $b\leq Np\left( \frac{1}{N}+\frac{1%
}{r}-\frac{1}{p}\right) $ or\newline
(ii-{\scriptsize 3}) $a=\frac{N}{r}$ and $b=Np\left( \frac{1}{N}+\frac{1}{r}-%
\frac{1}{p}\right) .\smallskip $\newline
(iii) $p<N,r>p^{*}$ and\newline
(iii-{\scriptsize 1}) $a<\frac{N}{r}$ and $b\geq arp\left( \frac{1}{N}+\frac{%
1}{r}-\frac{1}{p}\right) $ or\newline
(iii-{\scriptsize 2}) $a>\frac{N}{r}$ and $b\leq arp\left( \frac{1}{N}+\frac{%
1}{r}-\frac{1}{p}\right) $ or\newline
(iii-{\scriptsize 3}) $a=\frac{N}{r},b=arp\left( \frac{1}{N}+\frac{1}{r}-%
\frac{1}{p}\right) =Np\left( \frac{1}{N}+\frac{1}{r}-\frac{1}{p}\right)
.\smallskip $
\end{theorem}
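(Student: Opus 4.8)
\textbf{Proof proposal for Theorem~\ref{th17}.} The plan is to specialize Theorem~\ref{th1} (equivalently, Theorems~\ref{th8}, \ref{th10}, \ref{th13} and \ref{th16}) to the case $c=0$ and translate each of the four defining conditions into explicit inequalities on $a,b,p,r$. First I would record the sign reduction: the necessity of $a\neq0$ comes from part~(i) of Theorem~\ref{th2}, and among $a\neq0$ only $a>0$ can work when $c=0$. Indeed, by the Kelvin transform (Remark~\ref{rm1}) the case $a<0$ for $(a,b)$ with target weight $c=0$ is equivalent to the case $a>0$ for $(-a,2p-2N-b)$ with target weight $-2N$, and $c=-2N$ is excluded because $-2N$ lies outside the relevant open intervals in all of (i)--(iv) of Theorem~\ref{th1} whenever the parameters are admissible — more directly, $\zeta\in W^{1,(\infty,p)}_{\{a,b\}}(\Bbb R^N_*)$ for $a<0$ but $\zeta\notin L^r(\Bbb R^N)$ is false, so instead one checks $|x|^{-N/r}(1-\zeta)$ or uses that $c^0=ar-N<-N$ forces $c=0$ out of the admissible interval. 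So from now on assume $a>0$.

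Next I would organize the proof by the three regimes $1\le r<p$, $p\le r\le p^{*}$, and $p<N,\ r>p^{*}$, matching Theorems~\ref{th10}, \ref{th13} and \ref{th16} respectively. With $a>0$ the quantity $ap-N$ has the sign of $a-N/p$, so ``$ap-N$ and $b-p$ on the same side of $-N$'' reads $a(b-p+N)\ge0$, and ``strictly opposite sides'' reads $a(b-p+N)<0$, i.e. $b-p+N<0$ (since $a>0$); this is the ``helpful'' observation flagged in the text. The endpoints are $c^0=ar-N$ and $c^1=\tfrac{r(b-p+N)}{p}-N$. The condition $0$ lies (strictly) between $c^0$ and $c^1$ becomes $ar-N$ and $\tfrac{r(b-p+N)}{p}-N$ straddling $0$ strictly, i.e. the two numbers $a-\tfrac Nr$ and $\tfrac{b-p+N}{p}-\tfrac Nr=\tfrac1p\bigl(b-Np(\tfrac1N+\tfrac1r-\tfrac1p)\bigr)$ have opposite strict signs, which splits exactly into the sub-cases (i-1)/(i-2) (with strict inequalities) for $r<p$ and into (ii-1)/(ii-2) (with one inequality allowed to be non-strict, since for $p\le r\le p^{*}$ the endpoint $c=c^1$ is admissible by Theorem~\ref{th13}(iii) provided $a(b-p+N)>0$) for $p\le r\le p^{*}$; the degenerate case $ap-N=b-p$, i.e. $b=Np(\tfrac1N+\tfrac1r-\tfrac1p)$, forces $c^0=c^1=0$ and is admissible only when $p\le r$ (Theorem~\ref{th13}(iii) or, for $r>p^{*}$, Theorem~\ref{th16}(iii)), never when $r<p$ (Theorem~\ref{th2}(v)), yielding (ii-3) and (iii-3). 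For the case $a(b-p+N)<0$ in the regimes $r\ge p$ one has $-N$ strictly between $c^0$ and $c^1$; checking whether $0$ lies in the open interval between $c^0$ and $-N$ gives again a sign condition on $a-N/r$ together with the location of $0$, and one must verify the extra restriction $\theta_0\le p^{*}/r$ when $r>p^{*}$ — this is precisely what turns $b\ge Np(\cdots)$ (valid for $p\le r\le p^{*}$) into the sharper $b\ge arp(\tfrac1N+\tfrac1r-\tfrac1p)$ in (iii-1) and (iii-2), since $c^{*1}=\tfrac{p^{*}}{r}c^1+(1-\tfrac{p^{*}}{r})c^0$ and $0\le c^{*1}$ (on the appropriate side) unwinds to that inequality.

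The one genuinely fiddly point is the bookkeeping in the regime $p<N,\ r>p^{*}$, case $a(b-p+N)<0$: here $0$ could a priori lie between $c^0$ and $-N$, or between $-N$ and $c^{*1}$, and one has to check that in all admissible configurations the condition ``$0$ admissible'' is equivalent to the single clean inequality $b\gtrless arp(\tfrac1N+\tfrac1r-\tfrac1p)$ according to the sign of $a-N/r$. I expect this to be the main obstacle — not conceptually, but in making sure the three sub-cases (ii-1), (ii-2), (ii-3) of Theorem~\ref{th16} collapse without gaps or overlaps onto (iii-1)--(iii-3), using that $\theta_{-N}\gtrless p^{*}/r$ corresponds exactly to $c^{*1}$ lying on one side or the other of $-N$. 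Everything else is routine substitution: plug $c=0$ into $c^0,c^1,\theta_c$, replace ``on the same/opposite side of $-N$'' by the sign of $a(b-p+N)$, and read off. As the paper says, the verification is tedious but mechanical once the dictionary above is set up, so I would state the dictionary precisely and then leave the case-by-case matching to the reader, exactly as the authors do.
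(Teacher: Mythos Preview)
Your proposal is correct and follows exactly the approach the paper indicates: specialize Theorems~\ref{th8}, \ref{th10}, \ref{th13} and \ref{th16} to $c=0$, use the dictionary $a(b-p+N)\ge 0$ versus $a(b-p+N)<0$ for the ``same side/opposite side'' condition, and read off the inequalities on $a$ and $b$ in each regime. The paper itself provides no detailed proof, merely the same hint you spell out; your identification of the $r>p^{*}$ bookkeeping (where the extra constraint $\theta_0\le p^{*}/r$ becomes $c^{*1}\gtrless 0$, i.e.\ $b\gtrless arp(\tfrac1N+\tfrac1r-\tfrac1p)$) as the only genuinely fiddly step is accurate.
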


For the subspace of radially symmetric functions of $W_{\{a,b\}}^{1,(\infty
,p)}(\Bbb{R}_{*}^{N}),$ part (i) remains true when $0<r<p.$ (Of course, the
embedding properties of radially symmetric functions into $L^{r}(\Bbb{R}%
^{N}) $ have nothing to do with the case $N=1$ of Theorem \ref{th17},
because the reduction to $N=1$ not only requires changing $b$ into $b+N-1$
but also $c$ into $c+N-1,$ which does not preserve the value $c=0$).

Whenever $u\in W_{\{a,b\}}^{1,(\infty ,p)}(\Bbb{R}_{*}^{N})$ and $a<\frac{N}{%
r}$ ($a>\frac{N}{r}$) is assumed, the integrability of $|u|^{r}$ near the
origin (at infinity) is obvious. What is not obvious is that integrability
at infinity (near the origin) is also true when the complementary condition
about $b$ holds. In (ii-{\scriptsize 3}) and (iii-{\scriptsize 3}), $a=\frac{%
N}{r}$ alone does not suffice for $|u|^{r}$ to be integrable near the origin
or at infinity, so that both properties also depend upon the complementary
condition $b=pN\left( \frac{1}{N}+\frac{1}{r}-\frac{1}{p}\right) .$

Below, we give more direct proofs of the sufficiency part of Theorem \ref
{th17} in three simpler special cases. In these proofs, the role played by
the integrability properties of $\nabla u$ (i.e., by the complementary
condition referred to above) becomes more apparent and connections with
(semi) classical results are revealed. On the other hand, the arguments
depend upon the problem at hand and do not suggest a general procedure to
prove Theorem \ref{th17}, let alone Theorem \ref{th1}, in any generality.

\smallskip \textbf{Example 1}. Let $N=1$ and $a=b=1,p=r=1.$ Clearly, the
embedding properties are the same when $\Bbb{R}_{*}$ is replaced by $%
(0,\infty ).$ Thus, by (ii-{\scriptsize 3}), $u\in L^{1}(0,\infty )$ if $%
tu\in L^{\infty }(0,\infty )$ and $tu^{\prime }\in L^{1}(0,\infty ).$ Since
this assumption is unaffected by changing $u$ into $|u|,$ it is not
restrictive to assume $u\geq 0.$ Then, $u$ is locally absolutely continuous
on $(0,\infty )$ and the formula $\int_{\alpha }^{\beta }u=\beta u(\beta
)-\alpha u(\alpha )-\int_{\alpha }^{\beta }tu^{\prime }$ for every $0<\alpha
<\beta <\infty $ shows that, indeed, $u\in L^{1}(0,\infty )$ since the
right-hand side is bounded irrespective of $\alpha $ and $\beta .$

\textbf{Example 2.} More generally, still with $N=1,$ assume $r\geq p$ and $%
b=p\left( 1+\frac{1}{r}-\frac{1}{p}\right) .$ By (ii) of Theorem \ref{th17}, 
$u\in L^{r}(0,\infty )$ if $t^{a}u\in L^{\infty }(0,\infty )$ for some $a>0$
and $t^{\frac{b}{p}}u^{\prime }\in L^{p}(0,\infty ).$ Since $a>0,$ it
follows from part (i) of Lemma \ref{lm6} and from Lemma \ref{lm5} that $%
|u(t)|\leq \int_{\infty }^{t}|u^{\prime }(\tau )|d\tau .$ On the other hand, 
$\left( \int_{0}^{\infty }\left( \int_{\infty }^{t}|u^{\prime }(\tau )|d\tau
\right) ^{r}dt\right) ^{\frac{1}{r}}\leq C\left( \int_{0}^{\infty
}t^{b}|u^{\prime }(t)|^{p}dt\right) ^{\frac{1}{p}}$ by an inequality of
Bradley \cite{Br78}, \cite[p. 40]{Ma85} generalizing the Hardy-type
inequality (\ref{19}), which gives again $u\in L^{r}(0,\infty ).$

\textbf{Example 3.} Other special cases of Theorem \ref{th17} can be given
alternate proofs, including when $N>1.$ For example, if $b=0$ in Theorem \ref
{th17}, then (i-{\scriptsize 1}), (iii-{\scriptsize 2}) and (iii-%
{\scriptsize 3}) cannot occur and the necessary and sufficient conditions
for $W_{\{a,0\}}^{1,(\infty ,p)}(\Bbb{R}_{*}^{N})\hookrightarrow L^{r}(\Bbb{R%
}^{N})$ take the much simpler form:

\begin{corollary}
\smallskip \label{cor18}Let $a\in \Bbb{R}$ and $1\leq p<\infty $ be given.
Then, $W_{\{a,0\}}^{1,(\infty ,p)}(\Bbb{R}_{*}^{N})\hookrightarrow L^{r}(%
\Bbb{R}^{N})$ if and only if $a>0$ and one of the following conditions
holds:\smallskip \newline
(i) $1\leq r<p^{*}$ and $a>\frac{N}{r}.$\newline
(ii) $p<N,r=p^{*}.$\newline
(iii) $p<N,r>p^{*}$ and $a<\frac{N}{r}.$
\end{corollary}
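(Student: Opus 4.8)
The plan is to read off Corollary~\ref{cor18} from Theorem~\ref{th17} by specializing to $b=0$, and then, as announced in the introduction, to record the independent proof of the sufficiency that uses only Sobolev's inequality.

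For the first step, the entire content of the argument is the sign of
\[
\mu:=Np\Bigl(\tfrac1N+\tfrac1r-\tfrac1p\Bigr)=p+\tfrac{Np}{r}-N .
\]
An elementary computation gives $\mu>0,\ \mu=0,\ \mu<0$ according as $r<p^{*},\ r=p^{*},\ r>p^{*}$ (with $p^{*}=\infty$ when $p\geq N$), and shows that $\nu:=arp\bigl(\tfrac1N+\tfrac1r-\tfrac1p\bigr)$ has the sign of $a(p^{*}-r)$. Now put $b=0$ in Theorem~\ref{th17}. In part~(i), $r<p$ forces $r<p<p^{*}$, hence $\mu>0$, so (i-1) (which requires $0>\mu$) is empty and (i-2) reduces to ``$1\le r<p,\ a>N/r$''. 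In part~(ii), $0=b\ge\mu$ can hold only when $\mu\le 0$, i.e.\ when $p<N$ and $r=p^{*}$, so (ii-1) and (ii-3) together give ``$p<N,\ r=p^{*},\ 0<a\le (N-p)/p$'' while (ii-2) gives ``$p\le r\le p^{*},\ a>N/r$''. In part~(iii), $a>0$ and $r>p^{*}$ give $\nu<0$, which kills (iii-2) (which needs $0\le\nu$) and (iii-3) (which needs $0=\mu$), leaving (iii-1) $=$ ``$p<N,\ r>p^{*},\ 0<a<N/r$''. Collecting the survivors, the embedding holds precisely when $a>0$ and: $a>N/r$ if $r<p^{*}$; any $a>0$ if $r=p^{*}$ (which forces $p<N$); $a<N/r$ if $r>p^{*}$ (which forces $p<N$) --- i.e.\ conditions (i)--(iii) of the Corollary. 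Since Theorem~\ref{th17} is an equivalence this settles both directions, the necessity being in any case immediate from Theorem~\ref{th2}.

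For the alternative proof of the sufficiency, assume $a>0$ and set $M:=\||x|^{a}u\|_{\infty}$, so $|u(x)|\le M|x|^{-a}$ a.e.\ and $u(x)\to 0$ as $|x|\to\infty$. Suppose first $p<N$ (this handles parts (ii), (iii), and part (i) with $p<N$). By Remark~\ref{rm4} when $N\ge 2$, and trivially when $N=1$ --- where $u'\in L^{p}_{loc}$ forces $u$ to have a finite limit at $0$ on each ray, hence $u\in L^{1}_{loc}(\Bbb{R}^{N})$ --- Sobolev's inequality yields a constant $\lambda$ with $u-\lambda\in L^{p^{*}}(\Bbb{R}^{N})$ and $\|u-\lambda\|_{p^{*}}\le C\|\nabla u\|_{p}$; the decay of $u$ at infinity forces $\lambda=0$, so $\|u\|_{p^{*}}\le C\|\nabla u\|_{p}$. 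If $r=p^{*}$ this is the claim. If $r<p^{*}$, then $\|u\|_{L^{r}(|x|\le 1)}\le C\|u\|_{p^{*}}$ (finite measure, $r\le p^{*}$) and $\|u\|_{L^{r}(|x|\ge 1)}\le CM$ (because $ar>N$ makes $|x|^{-ar}$ integrable at infinity), so $u\in L^{r}(\Bbb{R}^{N})$. If $r>p^{*}$, then $\|u\|_{L^{r}(|x|\le 1)}\le CM$ (now $ar<N$ makes $|x|^{-ar}$ integrable near $0$), while on $\{|x|\ge 1\}$ one has $|u|\le M$, whence $\int_{|x|\ge 1}|u|^{r}\le M^{\,r-p^{*}}\!\int|u|^{p^{*}}\le CM^{\,r-p^{*}}\|\nabla u\|_{p}^{p^{*}}$, so again $u\in L^{r}(\Bbb{R}^{N})$. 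When $p\ge N$ only part~(i) survives, with $p^{*}=\infty$ and $a>N/r$; writing $u=\zeta u+(1-\zeta)u$, the compactly supported piece $\zeta u$ has $\nabla(\zeta u)\in L^{p}(\Bbb{R}^{N})$ and, by the Morrey ($p>N$) or Trudinger ($p=N$) inequality for compactly supported functions (or, for $N=1$, just because $u$ is bounded near $0$), satisfies $\|\zeta u\|_{r}\le C\|\nabla(\zeta u)\|_{p}\le C(\|\nabla u\|_{p}+M)$, while $\|(1-\zeta)u\|_{r}\le CM$ since $ar>N$. In every case one reaches $\|u\|_{r}\le C(\|\nabla u\|_{p}+M)$, which moreover becomes the multiplicative bound of Theorem~\ref{th1} after the arithmetic--geometric inequality.

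The only delicate point is the appeal to Sobolev's inequality: one has to know that $u$ (or its compactly supported truncation) is regular enough for it to apply, although a priori one only knows $u\in L^{1}_{loc}(\Bbb{R}_{*}^{N})$ with $\nabla u\in L^{p}$; this is exactly where Remark~\ref{rm4} enters for $N\ge 2$ and the one-dimensional limit argument for $N=1$. The second thing to watch is that for $r>p^{*}$ the plain Sobolev embedding is insufficient and has to be coupled with the pointwise decay $|u|\le M|x|^{-a}$ through H\"older's inequality --- the elementary shadow, when $c=0$, of the nonlinear change of variable carried out in Section~\ref{r>p*}.
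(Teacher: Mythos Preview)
Your proposal is correct and follows essentially the same two-step approach as the paper: first specialize Theorem~\ref{th17} to $b=0$ (you carry out explicitly the sign analysis the paper leaves as a ``routine though tedious verification''), then give an independent sufficiency proof via Sobolev's inequality with the decay $|u|\le M|x|^{-a}$ handling the complementary region. The only cosmetic difference is in the case $p\ge N$, where the paper uses the half-space decomposition $\Bbb{R}_{*}^{N}=\cup H_{j}$ and the Sobolev embedding on each $B\cap H_{j}$, while you use the cutoff $\zeta u$ together with Morrey/Trudinger; both rely on the same regularity input (Remark~\ref{rm4}) that you correctly flag.
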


With the help of the right trick, the sufficiency part of Corollary \ref
{cor18} can also be proved by classical arguments (necessity still relies on
Section \ref{necessary}). Since $N=1$ is trivial, we assume $N\geq 2$ and,
for brevity, we only show how $W_{\{a,0\}}^{1,(\infty ,p)}(\Bbb{R}%
_{*}^{N})\subset L^{r}(\Bbb{R}^{N})$ can be recovered without elaborating on
the continuity issue.

We shall need the preliminary remark that $W_{\{a,0\}}^{1,(\infty ,p)}(\Bbb{R%
}_{*}^{N})=W_{\{a,0\}}^{1,(\infty ,p)}(\Bbb{R}^{N})$ for every $a\in \Bbb{R}$
when $N\geq 2.$ To see this, note that $\Bbb{R}_{*}^{N}$ is the union of
finitely many open half-spaces $H_{j}.$ If $u\in W_{\{a,0\}}^{1,(\infty ,p)}(%
\Bbb{R}_{*}^{N}),$ then $\nabla u\in L^{p}(H_{j})$ and so $u\in
W^{1,p}(B\cap H_{j})$ where $B$ is the unit ball of $\Bbb{R}^{N}.$ Since $j$
is arbitrary, it follows that $u\in W^{1,p}(B\backslash \{0\}).$ By 
\cite[Theorem 2.44]{HeKiMa93} or by the so-called ``absolutely continuous''
characterization of Sobolev functions (\cite[Theorem 2.1.4]{Zi89}), $%
W^{1,p}(B\backslash \{0\})=W^{1,p}(B)$ since $N\geq 2.$ In particular, $%
\nabla u$ as a distribution on $B$ coincides with $\nabla u$ as a
distribution on $B\backslash \{0\}.$ Thus, $\nabla u$ is the same as a
distribution on $\Bbb{R}^{N}$ or $\Bbb{R}_{*}^{N},$ so that $u\in
W_{\{a,0\}}^{1,(\infty ,p)}(\Bbb{R}^{N}).$

\begin{remark}
\label{rm4}By a similar argument, $W_{\{a,b\}}^{1,(q,p)}(\Bbb{R}%
_{*}^{N})=W_{\{a,b\}}^{1,(q,p)}(\Bbb{R}^{N})$ when $N\geq 2$ and $b\leq 0,$
irrespective of $1\leq p<\infty ,1\leq q\leq \infty $ and $a\in \Bbb{R}.$
However, equality need not hold when $b\geq 0.$ For example, if $u\in
C^{\infty }(\Bbb{R}_{*}^{N})$ has bounded support and coincides with $%
|x|^{-N}$ on a neighborhood of $0,$ then $u$ is not integrable near $0.$
Thus, $u$ is not in $W_{\{a,b\}}^{1,(q,p)}(\Bbb{R}^{N})$ for any values of
the parameters, but $u\in W_{\{a,b\}}^{1,(q,p)}(\Bbb{R}_{*}^{N})$ if $a$ and 
$b$ are large enough (depending on $p$ and $q$).
\end{remark}

By using $W_{\{a,0\}}^{1,(\infty ,p)}(\Bbb{R}_{*}^{N})=W_{\{a,0\}}^{1,(%
\infty ,p)}(\Bbb{R}^{N}),$ a ``direct'' proof that $a>0$ plus any one of the
conditions (i) to (iii) of Corollary \ref{cor18} suffices for $%
W_{\{a,0\}}^{1,(\infty ,p)}(\Bbb{R}_{*}^{N})\subset L^{r}(\Bbb{R}^{N})$ goes
as follows.

Suppose first that $p<N$ and let $u\in W_{\{a,0\}}^{1,(\infty ,p)}(\Bbb{R}
^{N})$ be given. By Sobolev's inequality, there is a constant $C>0$
independent of $u$ such that $||u-c||_{p^{*}}\leq C||\nabla u||_{p}$ for
some $c\in \Bbb{R}.$ In addition, $|x|^{a}u\in L^{\infty }(\Bbb{R}^{N})$
with $a>0$ implies that $u$ tends (essentially) uniformly to $0$ at
infinity. Thus, $c=0,$ so that $u\in L^{p^{*}}(\Bbb{R}^{N})$ and $%
||u||_{p^{*}}\leq C||\nabla u||_{p}.$ Not only this shows that (ii)
suffices, but it also corroborates the inequality (\ref{8}) (or (\ref{9}))
of Theorem \ref{th1} when $a\neq \frac{N}{p}-1$ (or $a=\frac{N}{p}-1$), $b=0$
and $r=p^{*}$ (so that $c^{1}=0$ in (\ref{4}), whence $\theta _{0}=1$).

Next, if $r>p^{*},$ the same argument that $u$ tends to $0$ at infinity and $%
u\in L^{p^{*}}(\Bbb{R}^{N})$ show that $|u|^{r}$ is integrable at infinity.
If $a<\frac{N}{r},$ then $|u|^{r}$ is also integrable near $0,$ which proves
that (iii) suffices. On the other hand, if $r<p^{*},$ then $u\in L^{p^{*}}(%
\Bbb{R}^{N})$ implies $u\in L_{loc}^{r}(\Bbb{R}^{N})$ while $|u|^{r}$ is
integrable at infinity if $a>\frac{N}{r}.$ This proves that (i) suffices
when $p<N.$

It only remains to prove the sufficiency of (i) when $p\geq N$ and $1\leq
r<\infty .$ As before, $a>\frac{N}{r}$ implies that $|u|^{r}$ is integrable
at infinity. Once again, let $B$ denote the unit ball of $\Bbb{R}^{N}$ and $%
H_{j}$ a finite collection of open hyperplanes such that $\cup H_{j}=\Bbb{R}
_{*}^{N}.$ The condition $\nabla u\in L^{p}(\Bbb{R}^{N})$ implies $\nabla
u\in L^{p}(B\cap H_{j}).$ Therefore, $u\in W^{1,p}(B\cap H_{j})$ and so $%
u\in L^{r}(B\cap H_{j})$ by the Sobolev embedding theorem. As a result, $%
u\in L^{r}(B\backslash \{0\})=L^{r}(B)$ and the proof is complete.

\end{document}